\DeclareMathOperator\One{\mathds 1}
\DeclareMathOperator\Ne{\mathcal N}
\DeclareMathOperator\A{\mathcal A}
\DeclareMathOperator\R{\mathbb R}
\DeclareMathOperator\E{\mathcal E}
\DeclareMathOperator\EE{\mathbb E}
\DeclareMathOperator\C{\mathbb C}
\DeclareMathOperator\Z{\mathbb Z}
\DeclareMathOperator\N{\mathbb N}
\DeclareMathOperator\Q{\mathbb Q}
\DeclareMathOperator\T{\mathbb T}
\DeclareMathOperator\csm{c^{sm}}
\DeclareMathOperator\mC{mC}
\DeclareMathOperator\Rep{Rep}
\DeclareMathOperator\HH{\mathcal H}
\DeclareMathOperator\KK{\mathcal K}
\DeclareMathOperator\SSS{\mathcal S}
\DeclareMathOperator\fac{F}
\DeclareMathOperator\G{\mathcal G}
\DeclareMathOperator\Hom{\mathrm Hom}
\DeclareMathOperator\Inj{\mathrm Inj}
\DeclareMathOperator\Ext{\mathrm Ext}
\DeclareMathOperator\rk{rk}
\DeclareMathOperator{\GL}{GL}
\DeclareMathOperator{\BGL}{BGL}
\DeclareMathOperator\Fl{\mathcal F}
\DeclareMathOperator\CQ{\C\! Q}
\DeclareMathOperator\ep{\varepsilon}
\DeclareMathOperator\OO{\mathcal O}
\DeclareMathOperator\V{\mathcal V}
\DeclareMathOperator\Y{\mathcal Y}
\DeclareMathOperator\Exp{Exp^*}
\DeclareMathOperator\Expy{Exp_y^*}
\newcommand{\ggamma}{{\boldsymbol \gamma}}
\newcommand{\ddelta}{{\boldsymbol \delta}}
\newcommand{\zv}{{\boldsymbol 0}}
\DeclareMathOperator\codim{codim}
\newtheorem{fact}{Fact}[section]
\newtheorem{lemma}[fact]{Lemma}
\newtheorem{theorem}[fact]{Theorem}
\newtheorem{definition}[fact]{Definition}
\newtheorem{example}[fact]{Example}
\newtheorem{rremark}[fact]{Remark}
\newenvironment{remark}{\begin{rremark} \rm}{\end{rremark}}
\newtheorem{proposition}[fact]{Proposition}
\newtheorem{corollary}[fact]{Corollary}
\newtheorem{conjecture}[fact]{Conjecture}
\author{R. Rim\'anyi}
\address{Department of Mathematics, University of North Carolina at Chapel Hill, USA}
\email{rimanyi@email.unc.edu}
\title{Motivic characteristic classes in cohomological Hall algebras}
\begin{document}

\begin{abstract}
The equivariant Chern-Schwartz-MacPherson (CSM) class and the equivariant Motivic Chern (MC) class are important characteristic classes of singular varieties in cohomology and K theory---and their theory overlaps with the theory of Okounkov's stable envelopes. We study CSM and MC classes for the orbits of Dynkin quiver representations. We show that the problem of computing the CSM and MC classes of all these orbits can be reduced to some basic classes $c^o_\beta$, $C^o_\beta$ parameterized by positive roots $\beta$. We prove an identity in a deformed version of Kontsevich-Soibelman's Cohomological (and K-theoretic) Hall Algebra (CoHA, KHA), namely, that a product of exponentials of $c_\beta^o$ (or $C_\beta^o$) classes formally depending on a stability function $Z$, does {\em not} depend on $Z$. This identity---which encodes infinitely many identities among rational functions in growing number of variables---has the structure of Donaldson-Thomas type quantum dilogarithm identities. Using a wall-crossing argument we present the $c_\beta^o$, $C_\beta^o$ classes as certain commutators in the CoHA, KHA.
\end{abstract}

\maketitle


\section{Introduction}

In this paper we discuss the relationship between cohomological and K-theoretic motivic characteristic classes of Dynkin quiver orbits, Hall algebras, and Donaldson-Thomas type identities. 

\subsection{Motivic characteristic classes}

Characteristic cohomology classes of singular varieties is a powerful tool in studying the geometry of the varieties. The most obvious characteristic class is the {\em fundamental class}. Following up on a conjecture of Deligne and Grothendieck, in \cite{M} MacPherson defined an inhomogeneous deformation of the fundamental class---which we will call the Chern-Schwartz-MacPherson (CSM) class. The push-forward and pull-back properties of the CSM class are similar to those of the fundamental class, and for smooth varieties the CSM class recovers the total Chern class of the variety. Most interestingly, the CSM class can be defined for locally closed subvarieties too, and it is additive---or ``motivic''. 

In \cite{BSY} Brasselet, Sch{\"u}rmann, and Yokura found the K theory analogue of the CSM class and named it motivic Chern (MC) class. Its behavior with respect to push-forward and pull-back is similar to that of the CSM class.
For a smooth variety the motivic Chern class is Hirzebruch's $\lambda_y$-class of the cotangent bundle, which can be regarded as
the K-theoretic total Chern class. For a smooth compact variety the integral of the motivic Chern class is the $\chi_y$-genus of the variety. By definition the MC class is motivic too.

Often the varieties whose characteristic classes we are studying can be obtained as degeneracy loci varieties. Standard arguments in degeneracy loci theory reduce the problem of finding charactersitic classes of degeneracy loci to finding {\em equivariant} characteristic classes of invariant subvarieties of representations (cf. Thom polynomials of singularities, or quiver polynomials). The equivariant versions of CSM and MC classes exist, see \cite{O1, W, O2,  FRW}. It is shown in \cite{RV,FRcsm, AMSS, FRW, AMSS2} that for certain varieties the CSM and MC classes are the same as the {\em cohomological and K-theoretic stable envelope} classes introduced by Okounkov and his co-authors, see \cite{MO1, O, AO1} and references therein, as well as \cite{GRTV, RTV3}. 

\subsection{Equivariant CSM and MC classes of orbits of quiver representations}
Representations associated with Dynkin quivers have finitely many orbits. Moreover, Gabriel's theorem and the notion of Aulander-Reiten quiver reduces lots of geometric questions about the orbits to simpler algebraic or combinatorial questions. Using these reductions the equivariant fundamental classes of orbit closures are now quite well understood (often under the name of quiver polynomials), see e.g. \cite{B, qr} and references therein---although some key positivity conjectures are still open.

The {\bf first achievement} of this paper is a formula for the equivariant CSM and MC classes of orbits of Dynkin quiver representations: the results are Theorem \ref{thm:1} and \ref{thm:2} as well as their interplay explained in Section \ref{sec:interplay}. The main tool in the proof is a modification of Reineke's resolution \cite{Re}. For the $A_2$ quiver the CSM and MC classes of orbits were already studied in \cite{PP,FRcsm,FRW}, but even in this case our formulas are new. 

\subsection{Cohomological and K-theoretic Hall algebras}
Inspired by string theory, Kontsevich and Soibelman introduced the notion of Cohomological Hall Algebras (CoHA) in \cite{KS}. For quivers this algebra is built on a vector space which is the direct sum of equivariant cohomology rings of quiver representations for different dimension vectors. Hence the characteristic classes of quiver orbits we mentioned above are elements of the CoHA. It is shown in \cite{rrcoha} that the fundamental classes of Dynkin quiver orbit closures, as elements of the CoHA, can be obtained as $1*1*\ldots*1$. Here $*$ is the non-commutative multiplication of the CoHA and the different 1's are some special elementary classes. 

In \cite{YZ} a deformed version of the CoHA is defined and it is also extended from cohomology to other extraordinary cohomology theories. In this paper by CoHA and KHA we will mean this deformed version of \cite{YZ} for cohomology and K theory.

In Section \ref{sec:co} we define a new cohomological characteristic class $c^o_\beta$ and a new K-theoretic characteristic class $C^o_\beta$ for every positive root $\beta$ for every Dynkin quiver. The {\bf second achievement} of the paper is Theorem \ref{thm:main1}, in which we present the CSM and MC classes of Dynkin quiver orbits (up to a factorial or quantum factorial) in the form $c^o_{\beta_1}*c^o_{\beta_2}*\ldots*c^o_{\beta_N}$ and $C^o_{\beta_1}*C^o_{\beta_2}*\ldots*C^o_{\beta_N}$. In type $A$ we have $c^o_\beta=1$ for every $\beta$, hence the formula is $1*\ldots *1$ just like for the fundamental class, but this coincidence only holds in type $A$. 

The usual features of characteristic classes are positivity and stability. Positivity means that the classes, expanded in some suitably chosen basis, have non-negative or alternating sign coefficients. Stability means that the characteristic class of a ``simpler'' orbit should be computable from the characteristic class of a ``more difficult'' orbit. The above presentation of CSM and MC classes as products of $c^o_\beta$ or $C^o_\beta$ classes is a manifestation of stability. Positivity questions are not studied in this paper. 

\subsection{Donaldson-Thomas type identities for motivic characteristic classes}
One outcome of Donaldson-Thomas theory for Dynkin quivers is an identity (for every quiver), originally due to Reineke (see more detail in Remark \ref{ex:qdilog} below). Both sides of this identity is a product of quantum dilogarithm series, on one side the factors are parameterized by simple roots, and on the other side the factors are parameterized by positive roots; the identity for $A_2$ is hence often called the pentagon identity. The values are evaluated in a quantum algebra. 

In Theorem \ref{thm:coincidence} we prove that our presentation of CSM and MC classes can be translated to an identity whose structure is similar to the quantum dilogarithm identities. On both sides of the identity we have (generalized) exponentials of the $c^o_\beta$, $C^o_\beta$ classes, on one side the factors are parameterized by simple roots, on the other side they are parameterized by positive roots. Now the values are evaluated in the CoHA, KHA instead of the quantum algebra. The quantum dilogarithm identities boil down to identities among rational functions in one variable. Our identities boil down to identities among rational functions in infinitely many variables. 

The {\bf third achievement} of the paper is Theorem \ref{thm:Z}, a generalization of Theorem \ref{thm:coincidence}. Here---following the pattern of Donaldson-Thomas invariants---we define a product of exponentials of $c^o_\beta$ ($C^o_\beta$) classes depending on {\em stability functions}, and prove that the value is independent of the stability function. Again, the encoded identities are among rational functions in infinitely many variables. 

\subsection{Computation of the $c^o_\beta$, $C^o_\beta$ characteristic classes}
It is clear now that the main ingredients of the theory of CSM and MC classes for quivers, as well as the building blocks of the CoHA and KHA are the $c^o_\beta$, $C^o_\beta$ characteristic classes. Although there are not many of them, just one for every positive root for every Dynkin quiver, their structure is rich. In Section \ref{sec:coCo} we present three approaches to their calculation. First, we describe the brute force {\em sieve} method, and we show an improved version making it not completely computationally hopeless. Then we recall results form \cite{FRcsm, FRW} showing {\em interpolation characterizations} for $c^o_\beta$ and $C^o_\beta$. This interpolation characterization is strongly motivated by Okounkov's axioms for cohomological and K-theoretic stable envelopes---although in K-theory the main Newton-polygon axiom is weaker than that of Okounkov. The {\bf fourth  
achievement} of the paper is Theorem \ref{thm:commutator}, in which we present the $c^o_\beta$, $C^o_\beta$ classes as certain commutators in the CoHA, KHA. We illustrate the efficiency of this theorem by examples, and we phrase Conjecture \ref{con:key} for the explicit value of~$c^o_\beta$.

\subsection{Acknowledgment}
The research presented in this paper grew out of discussions with L.~Mihalcea on motivic characteristic classes. The author is very grateful to him for valuable remarks, questions, and suggestions. The author acknowledges the support of Simons Foundation grant~52388.

\section{Quivers}

\noindent Our general reference for the theory of quivers is \cite{Ki}.

\subsection{Quiver representation spaces}
Let $Q=(Q_0,Q_1)$ be an oriented graph---the quiver---with vertex set $Q_0$ and edge set $Q_1$. An edge $a\in Q_1$ has a tail $t(a)\in Q_0$ and a head $h(a)\in Q_0$. Vectors of the type $\N^{Q_0}$ are called dimension vectors. For a dimension vector $\gamma\in \N^{Q_0}$ we will be concerned with the vector space
\[
\Rep^Q_\gamma=\Rep_\gamma=\bigoplus_{a\in Q_1} \Hom(\C^{\gamma(t(a))},\C^{\gamma(h(a))})
\]
acted upon by the group
\[
\GL_\gamma=\prod_{i \in Q_0} \GL_{\gamma(i)}(\C)
\]
by the rule
\[
(A_i)_{i\in Q_0} \cdot (\phi_a)_{a\in Q_1} =(  A_{h(a)}  \circ \phi_a \circ A_{t(a)}^{-1} )_{a\in Q_1}.
\]

\subsection{Path algebra, modules} \label{sec:pathalg}

Consider the vector space of formal complex linear combinations of oriented paths is $Q$, including the length zero path at each vertex $i\in Q_0$. Endowing this vector space with the multiplication induced by concatenation of oriented paths is called the path algebra $\CQ$ of the quiver.

An important fact in the theory of quivers is that the isomorphism types of finite dimensional right modules over $\CQ$ (aka. quiver representations) are in bijection with the orbits of the representations $\{\Rep^Q_\gamma\}_{\gamma \in \N^{Q_0}}$. 

\subsection{Dynkin quivers}
If the underlying un-oriented graph of $Q$ is one of the simply-laced Dynkin graphs $A_n, D_n, E_6, E_7, E_8$, then the quiver is called a Dynkin quiver. For Dynkin quivers Gabriel's theorem describes all $\CQ$-modules,
as follows. Let $\ep_i$, $i\in Q_0$ be the simple roots of the same named root system, and let $R^+=\{\beta_j\}_{j=1,\ldots,N}$ be the set of  positive roots.
\begin{itemize}
\item The (isomorphism types of) indecomposable $\CQ$-modules are in bijection with $R^+$.
\item The dimension vector of the indecomposable $\CQ$-module $M_{\beta_j}$ corresponding to $\beta_j=\sum_{i\in Q_0} \beta_{j,i}\ep_i\in R^+$ is $\beta_{j,i}\in \N^{Q_0}$. Below we identify this dimension vector with $\beta$ itself, that is, a positive root $\beta$ will be thought of a dimension vector.
\item Every $\CQ$-module is the direct sum of indecomposables in a unique way.
\end{itemize}

\subsection{Orbits of Dynkin quivers by Kostant partitions or diagrams}

Section \ref{sec:pathalg} reduced the description of $\Rep_\gamma$ orbits to the description of $\CQ$-modules, and Gabriel's theorem described the $\CQ$-modules for Dynkin quivers. As a corollary we obtain the orbit description of Dynkin quivers, as follows. Let $Q$ be a Dynkin quiver, and let $\gamma$ be a dimension vector. The orbits of $\Rep_\gamma$ are in bijection with so-called {\em Kostant partitions} of $\gamma$, that is, $(m_\beta)_{\beta\in R^+}$ vectors satisfying
$\sum_{\beta\in R^+} m_\beta \beta=\gamma$.

\begin{example} \rm
For $A_2=(1 \to 2)$ the positive roots are $\beta_1=\ep_1$, $\beta_2=\ep_1+\ep_2$, $\beta_3=\ep_2$. For dimension vector $\gamma=(3,4)$ hence we have the Kostant partitions $(0,3,1)$, $(1,2,2)$, $(2,1,3)$, $(3,0,4)$. The corresponding orbits of $\Rep_{(3,4)}$ can be visualized by the diagrams
\[
\begin{tikzpicture}
\filldraw (0,0) circle (2pt) -- (1,0) circle (2pt);
\filldraw (0,.5) circle (2pt) -- (1,.5) circle (2pt);
\filldraw (0,1) circle (2pt) -- (1,1) circle (2pt);
\filldraw                            (1,1.5) circle (2pt);

\filldraw (3,0) circle (2pt) -- (4,0) circle (2pt);
\filldraw (3,.5) circle (2pt) -- (4,.5) circle (2pt);
\filldraw (3,1) circle (2pt)  (4,1) circle (2pt);
\filldraw                            (4,1.5) circle (2pt);

\filldraw (6,0) circle (2pt) -- (7,0) circle (2pt);
\filldraw (6,.5) circle (2pt) (7,.5) circle (2pt);
\filldraw (6,1) circle (2pt)  (7,1) circle (2pt);
\filldraw                            (7,1.5) circle (2pt);

\filldraw (9,0) circle (2pt)   (10,0) circle (2pt);
\filldraw (9,.5) circle (2pt)  (10,.5) circle (2pt);
\filldraw (9,1) circle (2pt)  (10,1) circle (2pt);
\filldraw                            (10,1.5) circle (2pt);
\end{tikzpicture}.
\]
Of course, these orbits are the linear maps $\C^3\to \C^4$ of rank $3,2,1,0$ respectively.
\end{example}

\begin{example} \label{ex:A3} \rm
For $A_3=(1\to 2 \to 3)$ the positive roots are  $\beta_1=\ep_1$, $\beta_2=\ep_1+\ep_2$, $\beta_3=\ep_1+\ep_2+\ep_3$, $\beta_4=\ep_2$, $\beta_5=\ep_2+\ep_3$, $\beta_6=\ep_3$. For dimension vector $\gamma=(1,2,1)$ the Kostant partitions are $(0,0,1,1,0,0), (0,1,0,0,1,0), (0,1,0,1,0,1), (1,0,0,1,1,0)$, $(1,0,0,1,0,1)$ visualized by the diagrams
\[
\begin{tikzpicture}
\filldraw (0,0) circle (2pt) -- (1,0) circle (2pt) --(2,0) circle (2pt);
\filldraw (1,.5) circle (2pt);

\filldraw (3.5,0) circle (2pt) -- (4.5,0.5) circle (2pt) ;
\filldraw (4.5,0) circle (2pt) -- (5.5,0) circle (2pt);

\filldraw (7,0) circle (2pt) -- (8,0) circle (2pt)  (9,0) circle (2pt);
\filldraw (8,.5) circle (2pt);

\filldraw (10.5,0) circle (2pt)  (11.5,0) circle (2pt) --(12.5,0) circle (2pt);
\filldraw (11.5,.5) circle (2pt);

\filldraw (14,0) circle (2pt) (15,0) circle (2pt)  (16,0) circle (2pt);
\filldraw (15,.5) circle (2pt);
\end{tikzpicture}.
\]
For example, the first (second) orbit consist of pairs of rank 1 maps $(\psi_1:\C\to \C^2,\psi_2:\C^2\to \C)$ for which $\psi_2\psi_1=0$ (respectively $\psi_2\psi_1=$isomorphism).
\end{example}

\subsection{The Reineke order of positive roots of a Dynkin quiver}

Let $Q$ be a Dynkin quiver. Recall that for a positive root $\beta$ we denote by $M_\beta$ the corresponding indecomposable module (whose dimension vector is $\beta$). For positive roots $\beta_1,\beta_2$ define the Euler form
\[
\chi(\beta_1,\beta_2)= 
\dim \Hom_{\CQ}(M_{\beta_1},M_{\beta_2})
-\dim \Ext^1_{\CQ}(M_{\beta_1},M_{\beta_2}).
\]
It is a well known fact that the Euler form can be calculated from the dimension vectors $\beta_1,\beta_2$ themselves as
\[
\chi(\beta_1,\beta_2)=
\sum_{i\in Q_0} \beta_1(i)\beta_2(i) 
-\sum_{a\in Q_1} \beta_1(t(a)) \beta_2(h(a)).
\]

\begin{definition}
For a Dynkin quiver $Q$ we call $\beta_1 < \beta_2 < \ldots < \beta_N$ a Reineke order of the positive roots if
\begin{equation}\label{eqn:ordercond}
i>j \qquad \Rightarrow \qquad \Hom_{\CQ}(M_{\beta_j},M_{\beta_i})=0 \ \text{and}\ \Ext_{\CQ}^1(M_{\beta_i},M_{\beta_j})=0.
\end{equation}
\end{definition}

This definition is a variant of \cite[Definition 2]{Re}---with the two differences that we only consider total orders not the general partial orders, and we reversed the conventions in \cite{Re}. It is also shown there that condition (\ref{eqn:ordercond}) is equivalent to
\[
i>j \qquad \Rightarrow \qquad \chi(M_{\beta_i},M_{\beta_j})\geq 0 \geq \chi(M_{\beta_j},M_{\beta_i}),
\]
and that  Reineke orders exist. In fact, a conceptual way of finding Reineke orders is using the ``knitting algorithm'' to construct the Auslander-Reiten graph of $Q$ (see an example in \cite[Section 6]{tegez}) and using the combinatorics of that graph---we will not cover this argument here.

\begin{example} \rm
For the $A_2=(\circ \to \circ)$ quiver the only Reineke order is $(1,0)<(1,1)<(0,1)$. For the equioriented $A_3=(\circ \to \circ \to \circ)$ quiver a Reineke order is
\[
(1,0,0)<(1,1,0)<(0,1,0)<(1,1,1)<(0,1,1)<(0,0,1),
\]
and another one is obtained by replacing $(0,1,0)$ and $(1,1,1)$.
\end{example}

\section{Geometric constructions based on a sequence of dimension vectors} \label{sec:GEO}

Let $Q$ be a quiver and let $\ggamma=(\gamma_1,\ldots,\gamma_r)$ be a list (i.e. an ordered sequence) of dimension vectors, and set $\gamma=\sum_{u=1}^r \gamma_u$. Note the difference between $\ggamma$ (a list of dimension vectors) and $\gamma$ (a dimension vector). In this section we will associate some geometric constructions to $\ggamma$.

\subsection{The bundles} \label{sec:bundles}
For $i\in Q_0$ define $\Fl_{\ggamma,i}$ to be the partial flag variety parameterizing chains of subspaces
\begin{equation}\label{eqn:chain}
0=V_{i,0} \subset V_{i,1} \subset V_{i,2}\subset \ldots \subset V_{i,r}=\C^{\gamma(i)}
\end{equation}
with $\dim V_{i,u}=\gamma_1(i)+\ldots+\gamma_u(i)$. Let $\SSS_{i,u}$ be the bundle over $\Fl_{\ggamma,i}$ whose fiber over the point~(\ref{eqn:chain}) is $V_{i,u}/V_{i,u-1}$. Define
\[
\Fl_{\ggamma}=\prod_{i\in Q_0} \Fl_{\ggamma,i}.
\]
\begin{definition}
Let
\begin{align}
\G^{(1)}_{\ggamma}&=\bigoplus_{a\in Q_1} \bigoplus_{v<w} \Hom(\SSS_{t(a),v},\SSS_{h(a),w}), \nonumber  \\
\G^{(2)}_{\ggamma}&=\bigoplus_{a\in Q_1} \bigoplus_{v>w} \Hom(\SSS_{t(a),v},\SSS_{h(a),w}), \label{eqn:Gdef} \\
\G^{(3)}_{\ggamma}&=\bigoplus_{i\in Q_0} \bigoplus_{v<w} \Hom(\SSS_{i,v},\SSS_{i,w}), \nonumber\\
\G^{(4)}_{\ggamma}&=\bigoplus_{a\in Q_1} \bigoplus_{v=1}^r \Hom(\SSS_{t(a),v},\SSS_{h(a),v}), \nonumber
\end{align}
be the bundles pulled back from $\Fl_{\ggamma}$ to $\Fl_{\ggamma}\times \Rep_\gamma$ via the projection map.
\end{definition}

\subsection{The incidence variety}

If $(V_{i,u})_{i\in Q_0,u=1,\ldots,r}\in\Fl_{\ggamma}$ and $(\phi_a)_{a\in Q_1}\in \Rep_\gamma$ satisfy $\phi_a(V_{t(a),u})\subset V_{h(a),u}$ for all $a$ and $u$, then we will call them {\em consistent}.
For such a consistent pair we have the induced linear maps
\[
\tilde{\phi}_{a,u}\!:\!V_{t(a),u}/V_{t(a),u-1} \to V_{h(a),u}/V_{h(a),u-1}
\]
for all $a$ and $u$. The collection $(\tilde{\phi}_{a,u})_{a\in Q_1}$ for a fixed $u$ is an element of $\Rep_{\gamma_u}$.

\begin{definition} \label{def:incvar}
Let $Q$ be a Dynkin quiver. Define the {\em incidence variety}
\begin{align*}
\Sigma_{\ggamma}=
\{
( (V_{i,u}),(\phi_a))\in \Fl_{\ggamma}\times \Rep_\gamma
\ |\
&
(V_{i,u}) \text{ and } (\phi_a) \text{ are consistent}, \\
&
(\tilde{\phi}_{a,u})_{a\in Q_1} \text{ is in the open orbit of } \Rep_{\gamma_u} \text{ for all } u
\}.
\end{align*}
\end{definition}
We will refer to the two conditions in the definition as the `consistency' condition and the `openness' condition.
Here are some examples of descriptions of open orbits.
\begin{itemize}
\item For $Q=A_2$ the open orbit consists of maximal rank maps.
\item Let $Q=D_4$ with edges oriented towards the center (the degree 3 vertex). Let $\gamma=(1,1,2,1)$ (the third coordinate being the center). The open orbit consists of three {\em injective} linear maps $\C^1\to \C^2$ such that their images are {\em three different} lines in $\C^2$.
\item \cite{BR} Let $Q$ be a type $A$ quiver with some orientation, and $\gamma$ a dimension vector. The diagram of the open orbit is obtained as follows. For each vertex $i$ consider $\gamma(i)$ dots below each other in such a way that neighboring dot towers are aligned at the top if the edge is oriented left, and are aligned at the bottom if the edge is oriented right. Then drawing all possible horizontal lines between dots will result in the diagram of the open orbit. For example, for $Q=(\circ \leftarrow \circ \to \circ \leftarrow \circ \to \circ \leftarrow\circ)$ and dimension vector $(2,3,2,4,6,3)$ the open orbit is
\[
\begin{tikzpicture}[scale=0.6]
\filldraw (3,0) circle (2pt) -- (4,0) circle (2pt);
\filldraw (3,.5) circle (2pt) -- (4,.5) circle (2pt);
\filldraw (1,1) circle (2pt) -- (2,1) circle (2pt) -- (3,1) circle (2pt) -- (4,1) circle (2pt);
\filldraw (0,1.5) circle (2pt) -- (1,1.5) circle (2pt) -- (2,1.5) circle (2pt) -- (3,1.5) circle (2pt) -- (4,1.5) circle (2pt) -- (5,1.5) circle (2pt);
\filldraw (0,2) circle (2pt) -- (1,2) circle (2pt);
\filldraw (4,2) circle (2pt) -- (5,2) circle (2pt);
\filldraw (4,2.5) circle (2pt) -- (5,2.5) circle (2pt);
\end{tikzpicture}
\qquad
=
\qquad
\begin{tikzpicture}[scale=0.6]
\filldraw (0,0) circle (2pt) -- (1,0) circle (2pt) -- (2,0) circle (2pt) -- (3,0) circle (2pt) -- (4,0) circle (2pt) -- (5,0) circle (2pt);
\filldraw (0,.5) circle (2pt) -- (1,1) circle (2pt);
\filldraw (1,.5) circle (2pt) -- (2,.5) circle (2pt) -- (3,.5) circle (2pt) -- (4,.5) circle (2pt);
\filldraw (3,1) circle (2pt) -- (4,2) circle (2pt);
\filldraw (3,1.5) circle (2pt) -- (4,2.5) circle (2pt);
\filldraw (4,1) circle (2pt) -- (5,.5) circle (2pt);
\filldraw (4,1.5) circle (2pt) -- (5,1) circle (2pt);
\end{tikzpicture}.
\]
\end{itemize}

\subsection{Incidence variety elements as filtrations of $\CQ$-modules} \label{sec:filt}

Consider an element of $A=((V_{i,u}),(\phi_a))\in \Fl_{\ggamma}\times \Rep_\gamma$ satisfying the consistency condition above. Its projection to $\Rep_\gamma$ corresponds to a $\CQ$-module, let us call it $M$. The element $A$ then can be thought of as a filtration 
\begin{equation}\label{eqn:filt}
0=M^{(0)}\subset M^{(1)} \subset M^{(2)} \subset \ldots \subset M^{(r)}=M,
\end{equation}
of $\CQ$-modules. Namely, $M^{(u)}$ is the $\CQ$-module obtained from the quiver representation 
\[
\left( (V_{u,i})_{i\in Q_0}, (\phi_a|_{V_{u,t(a)}})_{a\in Q_1}\right).
\]

The element $A$ satisfies the openness condition (that is $A\in\Sigma_{\ggamma}$) if the subquotient $\CQ$-modules $M^{(u)}/M^{(u-1)}$ correspond to the open orbit of $\Rep_{\gamma_u}$.

\subsection{Fibration between two incidence varieties} Let $\Fl(m)$ denote the full flag variety on~$\C^m$.

\begin{lemma} \label{lem:fibr}
For $\beta_1,\ldots,\beta_r\in R^+$ and  $m_1,\ldots,m_r\in \N$, let $\sum_u m_u\beta_u=\gamma$ and 
\begin{align}
\ggamma=&
( m_1\beta_1,m_2\beta_2,\ldots,m_r\beta_r),
\notag \\
\ggamma'=&
(\underbrace{\beta_1,\ldots, \beta_1}_{m_1},
\underbrace{\beta_2,\ldots, \beta_2}_{m_2},
\ldots,
\underbrace{\beta_r,\ldots, \beta_r}_{m_r}).
\label{eqn:lists}
\end{align}
Let $\Phi:\Fl_{\ggamma'} \times \Rep_\gamma \to \Fl_{\ggamma}\times \Rep_\gamma$ be induced by the obvious forgetful map $\Fl_{\ggamma'} \to \Fl_{\ggamma}$. Then
\begin{enumerate}[(i)]
\item \label{i} $\Phi$ maps $\Sigma_{\ggamma'}$ into $\Sigma_{\ggamma}$;
\item \label{ii} The obtained map $\Phi: \Sigma_{\ggamma'} \to \Sigma_{\ggamma}$ is a fibration with fiber 
\[
\Fl(m_1)\times \Fl(m_2) \times \ldots \times \Fl(m_r).
\]
\end{enumerate}
\end{lemma}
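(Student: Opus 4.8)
The plan is to unwind the definitions and show that the forgetful map $\Phi$ respects both defining conditions of the incidence varieties, then identify the fiber by a local triviality argument. First I would observe that a point of $\Fl_{\ggamma'}\times\Rep_\gamma$ consists of a finer flag than a point of $\Fl_{\ggamma}\times\Rep_\gamma$: writing the finer filtration as in~(\ref{eqn:filt}) indexed by $M^{(0)}\subset M^{(1)}\subset\cdots\subset M^{(m_1+\cdots+m_r)}=M$, the coarser filtration obtained by applying $\Phi$ simply keeps the subspaces at the ``block boundaries'' $u=m_1$, $u=m_1+m_2$, etc. In the language of Section~\ref{sec:filt}, a subquotient of the coarse filtration over the $\ell$-th block is built from the successive extensions of the $m_\ell$ subquotients of the fine filtration in that block, each of which (if the fine point lies in $\Sigma_{\ggamma'}$) is the open orbit module $M_{\beta_\ell}^{\oplus 1}$. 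An iterated extension of copies of the indecomposable $M_{\beta_\ell}$ is a $\CQ$-module of dimension vector $m_\ell\beta_\ell$; I must check it is the \emph{open} orbit module $M_{\beta_\ell}^{\oplus m_\ell}$. This is where I would use that $Q$ is Dynkin together with a vanishing statement: for the iterated self-extension to be forced to split as $M_{\beta_\ell}^{\oplus m_\ell}$ it suffices that $\Ext^1_{\CQ}(M_{\beta_\ell},M_{\beta_\ell})=0$, which holds for every real root of a Dynkin quiver (the indecomposable is rigid). Hence consistency of the fine point implies consistency of the coarse point, and the openness condition is preserved; this gives~(\ref{i}).

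For~(\ref{ii}), fix a point $B=((V_{i,u}),(\phi_a))\in\Sigma_{\ggamma}$ and analyze $\Phi^{-1}(B)$. A preimage is a choice, block by block, of a refinement of the one-step inclusion $V_{i,\ell-1}\subset V_{i,\ell}$ into a chain of $m_\ell$ steps, at each vertex $i$, compatible with $(\phi_a)$ and such that the resulting fine subquotients are the open orbit of $\Rep_{\beta_\ell}$. The key point is that the subquotient module $M^{(\ell)}/M^{(\ell-1)}$ over the block, being the open-orbit module of $\Rep_{m_\ell\beta_\ell}$, is isomorphic to $M_{\beta_\ell}^{\oplus m_\ell}$; refining its filtration by submodules whose successive quotients are each $\cong M_{\beta_\ell}$ is the same as choosing a complete flag in the multiplicity space $\C^{m_\ell}=\Hom_{\CQ}(M_{\beta_\ell},M_{\beta_\ell}^{\oplus m_\ell})$ (again using $\End_{\CQ}(M_{\beta_\ell})=\C$ and $\Ext^1_{\CQ}(M_{\beta_\ell},M_{\beta_\ell})=0$, so that every submodule and every extension in sight is ``diagonal''). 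This identifies the fiber set-theoretically with $\Fl(m_1)\times\cdots\times\Fl(m_r)$.

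To upgrade this to a genuine fibration I would argue local triviality directly: over a small enough open $U\subset\Sigma_{\ggamma}$ one can choose, by $\GL_\gamma$-equivariance and the openness of orbits, a continuously (algebraically) varying isomorphism of each block subquotient with the fixed module $M_{\beta_\ell}^{\oplus m_\ell}$ — concretely, a local section of the relevant $\mathrm{Aut}$-bundle — under which the data of a fine refinement becomes literally a point of the product of full flag varieties, independent of the base point in $U$. Thus $\Phi^{-1}(U)\cong U\times\prod_\ell\Fl(m_\ell)$ compatibly with projection, which is exactly the fibration claim. The main obstacle I anticipate is the bookkeeping in the first paragraph: making precise that ``the coarse subquotient is the iterated extension of the fine subquotients'' and that the Dynkin rigidity $\Ext^1_{\CQ}(M_{\beta_\ell},M_{\beta_\ell})=0$ really does force that iterated extension (and every intermediate submodule) to be the split module, rather than merely having the right dimension vector — in other words, that the diagram/open-orbit description of Section~\ref{sec:filt} is stable under passing between $\ggamma'$ and $\ggamma$. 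Once that structural statement is nailed down, identifying the fiber with the product of flag varieties and checking local triviality are routine.
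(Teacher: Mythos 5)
Your argument follows the same route as the paper: you use $\Ext^1_{\CQ}(M_\beta,M_\beta)=0$ to force the block subquotients to split as $M_\beta^{\oplus m}$ (the paper packages this via the Artin--Voigt lemma, but the content is identical), and you use $\End_{\CQ}(M_\beta)=\C$ to parameterize refinements of each block by a full flag in a multiplicity space $\C^{m}$, which is a reformulation of the paper's iterated-kernel Claim. The only genuine addition is your explicit local-triviality paragraph, which the paper asserts without argument; the only place you are slightly terser than the paper is in identifying $M_\beta^{\oplus m}$ as the open-orbit module of $\Rep_{m\beta}$ (where the paper invokes $\Ext(mM_\beta,mM_\beta)=0$ together with Artin--Voigt), but this is a standard fact and not a gap.
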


\begin{proof}
In the proof we will use the following results from the theory of quivers. 
\begin{enumerate}[(a)]
\item \label{a} \cite[Theorem 2.4]{Ki} Let $M$ be the $\CQ$-module corresponding to the orbit $\OO\subset \Rep_\gamma$. Then $\codim(\OO \subset \Rep_Q)=\dim(\Ext(M,M))$ (Artin-Voigt lemma).
\item \label{b} \cite[Theorem 3.25]{Ki} For a positive root $\beta$ we have $\Hom(M_\beta,M_\beta)=\C$, $\Ext(M_\beta,M_\beta)=0$.
\end{enumerate}
Consider the length $\sum m_u$ filtration of $\CQ$-modules corresponding to the element $A=$ $((V_{i,u}),$ $(\phi_a)) \in \Sigma_{\ggamma'}$ (cf.~Section~\ref{sec:filt}), and call it ``first filtration''. Its subquotients are isomorphic to appropriate $M_{\beta_u}$'s. The $\Phi$-image of $A$ obviously satisfies the consistency condition, hence it corresponds to a filtration of  $\CQ$-modules---call is the ``second filtration''. The second filtration is obtained from the first one by considering only the $m_1$'th, $m_1+m_2$'th, etc elements of the first filtration. 

Since $M_{\beta_u}$ modules have only trivial extensions, see  (\ref{b}), the subquotients of the second filtration are isomorphic to $\oplus^{m_u} M_{\beta_u} =m_uM_{\beta_u}$'s. From the bilinearity of $\Ext$ and (\ref{b}) we get that $\Ext(mM_\beta,mM_\beta)=0$ and hence---using (\ref{a})---we get that $m_uM_{\beta_u}$ corresponds to the open orbit of $\Rep_{m_u\beta_u}$. Therefore $\Phi(A)\in \Sigma_\gamma$, and (\ref{i}) is proved. To proceed we need to prove a claim.

\smallskip

\noindent{Claim:} {\em Consider a length $m$ fibration $0=M^{(0)}\subset M^{(1)}\subset \ldots\subset  M^{(m)}=M_{m\beta}$ for a positive root $\beta$ such that each  subquotient is isomorphic to $M_\beta$. Then this fibration is necessarily of the form
\[
0=0\otimes M_\beta \subset V^1\otimes M_\beta \subset V^2\otimes M_\beta \subset \ldots
\subset V^{m-1}\otimes M_\beta\subset \C^m\otimes M_\beta=mM_\beta,
\]
where $V^1\subset V^2\subset \ldots\subset V^{m-1}\subset\C^m$ is a full flag of $\C^m$. Therefore, the set of such fibrations is $\Fl(m)$.}

\smallskip

\noindent{Proof of the Claim:} Indeed, $M^{(m-1)}$ is the kernel of a homomorphism $mM_\beta\to mM_{\beta}/M^{(m-1)}\cong M_\beta$. Since $\Hom(mM_\beta,M_\beta)=\C^m$ (see (\ref{b})) there are exactly those homomorphisms $mM_\beta\to M_\beta$ whose kernels are of the form $V^{m-1}\otimes M_\beta$. Iterating this argument proves the claim. 

\smallskip

To prove (\ref{ii}) we just need to apply the claim to find the set of extensions of the length $r$ filtration of $\CQ$-modules corresponding to an element of $\Sigma_{\ggamma'}$, to obtain a length $\sum m_j$ filtration corresponding to an element of $\Sigma_{\ggamma}$.
\end{proof}

\subsection{Reineke's resolution of Dynkin quiver orbits} \label{sec:incvars}

In this section we will study three incidence varieties \`a la Definition \ref{def:incvar}, and their relations to quiver orbits.

Fix a Dynkin quiver $Q$ and a Reineke order $\beta_1<\ldots<\beta_N$ of its positive roots. Let $m=(m_1, \ldots, m_N)$ be a vector of non-negative integers. Then $m$ is a Kostant partition of the dimension vector $\gamma=\sum_{j=1}^N m_j \beta_j$. Let $\OO_m$ be the corresponding orbit in $\Rep_\gamma$.

Let us fix a total order $i_1<i_2<\ldots<i_n$ on $Q_0$ satisfying $h(a)<t(a)$ for all $a\in Q_1$. For a positive root $\beta_j$ and a vertex $i\in Q_0$ let $d_{j,i}$ be the dimension vector which is $\beta_j(i)$ at vertex $i$ and is 0 at all other vertex.

Consider the following three lists of dimension vectors:
\begin{align}
\ddelta_m=&
( m_1\beta_1,m_2\beta_2,\ldots,m_N\beta_N),
\notag \\
\ddelta'_m=&
(\underbrace{\beta_1,\ldots, \beta_1}_{m_1},
\underbrace{\beta_2,\ldots, \beta_2}_{m_2},
\ldots,
\underbrace{\beta_N,\ldots, \beta_N}_{m_N}),
\label{eqn:lists}\\
\ddelta''_m=&
(m_1d_{1,1},m_1d_{1,2},\ldots,m_1d_{1,n},\ \ \
m_2d_{2,1},m_2d_{2,2},\ldots,m_2d_{2,n},\ \ \ \ldots,
\notag \\
&
\ \hskip 6 true cm \ \ \  m_Nd_{N,1},m_Nd_{N,2},\ldots,m_Nd_{N,n}).
\notag
\end{align}
[The third list also depends of the total order of vertices chosen, we will not indicate this dependence in notation.]
Observe that the sum of the dimension vectors in each list is $\gamma=\sum_j m_j\beta_j$, the dimension vector $\gamma$ for which $\OO_m\subset \Rep_\gamma$.

\begin{example} \rm
For $Q=A_2=(1\to 2)$ the (only) Reineke order of positive roots is $(1,0)<(1,1)<(0,1)$. 
For $m=(2,2,2)$ we have $\OO_m=\{\rk 2 \text{ maps }\C^4\to \C^4 \}\subset \Rep_{(4,4)}$, and the three lists of dimension vectors are
\begin{align*}
\ddelta_m=&
\left( (2,0), (2,2), (0,2) \right),
\\
\ddelta'_m=&
\left(
(1,0),(1,0),(1,1),(1,1),(0,1),(0,1)
\right),
\\
\ddelta''_m=&
\left(
(0,0),(2,0),(0,2),(2,0),(0,2),(0,0)
\right).
\end{align*}
\end{example}


In the definition of $\Sigma_{\ddelta''_m}$ there are two conditions: the `consistency' condition, and the `openness' conditions. Observe that the latter one is an empty condition because the dimension vector of all occurring $(\tilde{\phi}_{a,u})_{a\in Q_1}$ is simple, that is, 0 in all but one component. Therefore, for $\ddelta''$, the incidence variety is the same as is defined in \cite{Re}---where only the consistency condition is required. For this incidence variety Reineke's result is the following.

\begin{proposition}\cite[Theorem 2.2]{Re} \label{prop:reineke}
The restriction $\pi$ of the projection $\pi_2:\Fl_{\ddelta_m''} \times \Rep_\gamma\to \Rep_\gamma$ to $\Sigma_{\ddelta''_m}$ has image $\overline{\OO}_m$. The map $\pi$ restricts to an isomorphism $\pi^{-1}(\OO_m)\to \OO_m$. \qed
\end{proposition}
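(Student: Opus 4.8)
The plan is to follow the standard template for such resolution statements: establish smoothness, irreducibility and properness of the source, pin down the generic point of the image to identify it as $\overline{\OO}_m$, and then prove that the fiber over a point of $\OO_m$ is a single reduced point.

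\emph{Step 1: the geometry of $\Sigma_{\ddelta''_m}$ and of $\pi$.} Since $\ddelta''_m$ carries no effective openness condition (as observed just before the statement), $\Sigma_{\ddelta''_m}$ is cut out inside $\Fl_{\ddelta''_m}\times\Rep_\gamma$ purely by the consistency condition, which is \emph{linear} in the coordinates $(\phi_a)$. Hence the first projection $\Sigma_{\ddelta''_m}\to\Fl_{\ddelta''_m}$ is a sub-bundle of the trivial bundle $\Fl_{\ddelta''_m}\times\Rep_\gamma$; its rank is constant because $\GL_\gamma$ acts transitively on the product of partial flag varieties $\Fl_{\ddelta''_m}$, so it is a genuine vector bundle and $\Sigma_{\ddelta''_m}$ is smooth and irreducible. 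The map $\pi$ is proper, being the restriction of the projection $\Fl_{\ddelta''_m}\times\Rep_\gamma\to\Rep_\gamma$ (with $\Fl_{\ddelta''_m}$ complete) to a closed subvariety. Therefore $\pi(\Sigma_{\ddelta''_m})$ is a closed, irreducible, $\GL_\gamma$-stable subset of $\Rep_\gamma$, hence the closure of a single orbit.

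\emph{Step 2: identifying that orbit.} Group the steps of a $\ddelta''_m$-flag into $N$ consecutive blocks of $n$ steps, so that block $j$ produces, for a consistent $\phi$, an induced representation on $V^{(jn)}/V^{((j-1)n)}$ of dimension vector $m_j\beta_j$. Inside a single block each vertex space grows by exactly one jump, so this induced representation is unconstrained: for a fixed flag the fiber of $\Sigma_{\ddelta''_m}\to\Fl_{\ddelta''_m}$ surjects onto $\prod_j\Rep_{m_j\beta_j}$. Thus for generic $(\mathbf V,\phi)$ the $N$ block representations lie in the open orbit of the respective $\Rep_{m_j\beta_j}$, which is $m_jM_{\beta_j}$ (as shown in the proof of Lemma~\ref{lem:fibr}). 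The module $\pi(\mathbf V,\phi)$ is then an iterated extension of $m_1M_{\beta_1},\ldots,m_NM_{\beta_N}$ in this order; since the Reineke order gives $\Ext^1_{\CQ}(M_{\beta_i},M_{\beta_j})=0$ for $i>j$, and hence $\Ext^1_{\CQ}(m_iM_{\beta_i},m_jM_{\beta_j})=0$ for $i>j$, every such extension splits, so the generic module is $\bigoplus_j m_jM_{\beta_j}\in\OO_m$. Hence $\pi(\Sigma_{\ddelta''_m})=\overline{\OO}_m$, and in particular $\OO_m$ lies in the image.

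\emph{Step 3: isomorphism over $\OO_m$.} It remains to show that $M:=M_m=\bigoplus_j m_jM_{\beta_j}$ admits a \emph{unique} flag inducing the subquotient dimension vectors of $\ddelta''_m$. Passing to the coarse flag $0=M^{(0)}\subset\ldots\subset M^{(N)}=M$ with $\underline\dim\bigl(M^{(j)}/M^{(j-1)}\bigr)=m_j\beta_j$, I would show $M^{(j)}=\bigoplus_{i\le j}m_iM_{\beta_i}$ by induction on $N$: the quotient $M/M^{(1)}$ carries the remaining filtration, so by induction it is $\bigoplus_{i\ge2}m_iM_{\beta_i}$, and the other half of the Reineke order, $\Hom_{\CQ}(M_{\beta_1},M_{\beta_i})=0$ for $i>1$, forces the quotient map $M\to M/M^{(1)}$ to kill the isotypic summand $m_1M_{\beta_1}$, which has the same dimension vector as $M^{(1)}$ and hence equals it; iterate. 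In particular each coarse subquotient is the open orbit $m_jM_{\beta_j}$. Refining inside block $j$: the prescribed subquotients there are semisimple, $S_{i_1}^{\oplus m_j\beta_j(i_1)},S_{i_2}^{\oplus m_j\beta_j(i_2)},\ldots$; since the vertex order satisfies $h(a)<t(a)$, $i_1$ is a sink, so the only submodule of $m_jM_{\beta_j}$ with dimension vector $m_j\beta_j(i_1)\ep_{i_1}$ is all of its $i_1$-component; modulo it $i_2$ is a sink, and so on, so every subspace in the flag is forced. Thus the flag is unique, i.e. $\pi^{-1}(N)$ is a single point for every $N\in\OO_m$. Finally, $\pi^{-1}(\OO_m)$ is dense open in the smooth irreducible $\Sigma_{\ddelta''_m}$ and maps properly and bijectively onto the smooth variety $\OO_m$; generic smoothness in characteristic zero makes this map étale over a $\GL_\gamma$-invariant dense open, hence (by $\GL_\gamma$-equivariance) étale everywhere over $\OO_m$, and an étale bijection onto a connected variety is an isomorphism.

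The main obstacle is Step 3: one must invoke \emph{both} halves of the Reineke order condition~(\ref{eqn:ordercond}) --- the $\Ext^1$-vanishing in Step 2 and the $\Hom$-vanishing for coarse uniqueness --- together with the sink property $h(a)<t(a)$ for the within-block refinement, and verify that these pin the flag down scheme-theoretically. The constant-rank claim in Step 1 and the block bookkeeping of $\ddelta''_m$ are routine.
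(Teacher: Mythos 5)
The paper does not prove this proposition itself---it cites Reineke's Theorem 2.2, and separately mentions that an alternative proof via King's Harder--Narasimhan uniqueness is folded into the proof of Theorem \ref{thm:Z}. Your reconstruction is therefore a genuinely independent attempt. Steps 1 and 2 are sound: the consistency condition is linear and $\GL_\gamma$ acts transitively on $\Fl_{\ddelta''_m}$, so $\Sigma_{\ddelta''_m}$ is a vector bundle over the flag variety, hence smooth and irreducible; the fiber over a fixed flag does surject onto $\prod_j \Rep_{m_j\beta_j}$ (because within block $j$, the head vertex $h(a)$ is reached at an earlier step than $t(a)$, so the block-diagonal map is unobstructed), and the $\Ext^1$-half of~(\ref{eqn:ordercond}) splits the iterated extension for a generic point, identifying the dense orbit in the image as $\OO_m$.

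The gap is in Step 3, in the uniqueness of the coarse filtration. Your induction asserts ``the quotient $M/M^{(1)}$ carries the remaining filtration, so by induction it is $\bigoplus_{i\geq 2}m_iM_{\beta_i}$.'' But the inductive hypothesis---that a module \emph{isomorphic to} $\bigoplus_{i\geq 2}m_iM_{\beta_i}$ has a unique such filtration---cannot be applied, because at this point you only know the dimension vector of $M/M^{(1)}$. Nothing yet forces $M/M^{(1)}\cong\bigoplus_{i\geq 2}m_iM_{\beta_i}$; indeed the whole point of Proposition~\ref{prop:reineke} is that the image of $\pi$ is the full closure $\overline{\OO}_m$, so there certainly exist modules admitting $\ddelta''_{m'}$-consistent flags that are \emph{not} direct sums of the indicated indecomposables. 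Equivalently: the $\Hom$-vanishing step you then invoke requires knowing the target is filtered by the $M_{\beta_i}$ with $i>1$, which is precisely what the induction was supposed to deliver. Also note that even if one knew $M \cong m_NM_{\beta_N}\oplus M^{(N-1)}$ as modules, the Reineke conditions do not kill $\Hom(M_{\beta_N},M_{\beta_j})$ for $j<N$, so a ``complement'' is not unique; the extra rigidity must come from the fine $\ddelta''_m$-flag (the per-vertex steps), not merely from the coarse block filtration, and your argument does not extract it. The within-block refinement paragraph and the final generic-smoothness/\'etale-bijection closing are fine as conditional steps, but they rest on the unproved coarse uniqueness. A cleaner route, and the one the paper actually gestures at, is to observe that for a suitable stability function $Z$ the filtration with subquotients $m_1M_{\beta_1},\dots,m_NM_{\beta_N}$ in decreasing phase order is the Harder--Narasimhan filtration of $M$, whose uniqueness is King's theorem; this sidesteps the circular induction entirely.
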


\begin{proposition} \label{prop:resolution2}
The projection $\pi_2:\Fl_{\ddelta_m} \times \Rep_\gamma \to \Rep_\gamma$, restricted to $\Sigma_{\ddelta_m}$ is an isomorphism $\Sigma_{\ddelta_m}\to \OO_m$.
\end{proposition}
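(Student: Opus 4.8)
The plan is to prove the statement in two stages, combining Reineke's original result (Proposition~\ref{prop:reineke}) with a fibration comparison. First I would compare $\Sigma_{\ddelta_m}$ with $\Sigma_{\ddelta''_m}$, using the fact that $\ddelta''_m$ is a refinement of $\ddelta_m$: each block $m_u\beta_u$ in $\ddelta_m$ is subdivided in $\ddelta''_m$ into the $n$ pieces $m_ud_{u,1},\ldots,m_ud_{u,n}$, and (because the total order on $Q_0$ satisfies $h(a)<t(a)$) the consistency condition at an element of $\Sigma_{\ddelta''_m}$ automatically forces, at each of the intermediate subspaces, the appropriate subrepresentation structure. So there is a forgetful map $\Psi:\Sigma_{\ddelta''_m}\to\Fl_{\ddelta_m}\times\Rep_\gamma$. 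I would check that $\Psi$ lands in $\Sigma_{\ddelta_m}$: the consistency condition passes to the coarser flag trivially, and the openness condition for $\ddelta_m$ (that each subquotient $m_uM_{\beta_u}$ is in the open orbit of $\Rep_{m_u\beta_u}$) holds for the same reason it was verified in the proof of Lemma~\ref{lem:fibr}, namely $\Ext(m_uM_{\beta_u},m_uM_{\beta_u})=0$ together with the Artin--Voigt lemma (a). Conversely, given a point of $\Sigma_{\ddelta_m}$, refining the flag at each vertex to split off the coordinate subspaces $\C^{\beta_u(i_1)}\oplus\cdots\subset\C^{\beta_u(i)}$-pieces in the prescribed vertex order produces a \emph{canonical} refinement into a point of $\Sigma_{\ddelta''_m}$ (the refinement is unique because the subquotients of $\ddelta''_m$ are simple dimension vectors, so there is no flag-of-subspaces ambiguity). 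Thus $\Psi$ is a bijection, and in fact an isomorphism of varieties, identifying $\Sigma_{\ddelta_m}\cong\Sigma_{\ddelta''_m}$ compatibly with the two projections $\pi_2$ to $\Rep_\gamma$.

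Granting this identification, Proposition~\ref{prop:reineke} tells us that $\pi_2|_{\Sigma_{\ddelta''_m}}$ has image $\overline{\OO}_m$ and restricts to an isomorphism over $\OO_m$. To upgrade this to ``$\pi_2|_{\Sigma_{\ddelta_m}}$ is an isomorphism onto $\OO_m$'' (note: $\OO_m$, not its closure), I would argue that the image is exactly $\OO_m$. Here is where the \emph{openness} condition in Definition~\ref{def:incvar} — which is genuinely present for $\ddelta_m$ although vacuous for $\ddelta''_m$ — does the work: via Section~\ref{sec:filt}, a point of $\Sigma_{\ddelta_m}$ corresponds to a filtration of the module $M=\pi_2(A)$ whose subquotients are $m_1M_{\beta_1},\ldots,m_NM_{\beta_N}$ in that order. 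Because the order is a Reineke order, condition~(\ref{eqn:ordercond}) gives $\Ext^1(M_{\beta_i},M_{\beta_j})=0$ for $i>j$, hence (bilinearity of $\Ext^1$ and (b)) all the extensions defining the filtration split, so $M\cong\bigoplus_u m_uM_{\beta_u}$, which is precisely the module of the orbit $\OO_m$. Therefore $\pi_2(\Sigma_{\ddelta_m})\subseteq\OO_m$; combined with the surjectivity onto $\overline{\OO}_m$ transported from Proposition~\ref{prop:reineke} and the bijectivity of $\Psi$, one concludes that over $\OO_m$ the map $\pi_2|_{\Sigma_{\ddelta''_m}}$ already has image inside $\OO_m$ as well, and the claimed isomorphism is exactly the restriction of Reineke's isomorphism $\pi^{-1}(\OO_m)\to\OO_m$.

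Alternatively, and perhaps more cleanly, I can phrase the whole argument so that $\ddelta''_m$ is never explicitly needed past the bijection: once I know $\pi_2(\Sigma_{\ddelta_m})=\OO_m$ (from the splitting argument above) and that $\pi_2|_{\Sigma_{\ddelta_m}}$ is injective (if $M\cong\bigoplus m_uM_{\beta_u}$, the Reineke-type filtration with these ordered subquotients is unique up to the splitting, and the flag it determines is canonical — this is again the ``simple subquotients force a canonical refinement'' point, now applied directly to the $\ddelta_m$ flags using that the $M_{\beta_u}$ are rigid), I get a bijective morphism $\Sigma_{\ddelta_m}\to\OO_m$. Finally, to promote bijective to isomorphism I would exhibit an explicit inverse: over the single orbit $\OO_m$ one can write down the canonical flag $(V_{i,u})$ in coordinates as the images/kernels of the maps $\phi_a$ restricted to the subspace $\bigoplus_{w\le u}(\text{copies of }M_{\beta_w})$, which depends algebraically on $(\phi_a)\in\OO_m$; since $\OO_m$ is a homogeneous space (smooth), a bijective morphism with an explicitly polynomial set-theoretic inverse is an isomorphism.

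The main obstacle I anticipate is the rigidity/canonicity step: showing that a point of $\Sigma_{\ddelta_m}$ lying over a given $M\in\OO_m$ is \emph{unique}, i.e.\ that the ordered-subquotient filtration of $M=\bigoplus_u m_uM_{\beta_u}$ with subquotients $m_1M_{\beta_1}\prec\cdots\prec m_NM_{\beta_N}$ is forced. This is where the Reineke order is essential — the Hom-vanishing half of (\ref{eqn:ordercond}), $\Hom(M_{\beta_j},M_{\beta_i})=0$ for $i>j$, prevents ``later'' indecomposables from mapping into ``earlier'' sub-modules and thus pins down each $M^{(u)}$ as the unique submodule isomorphic to $\bigoplus_{w\le u}m_wM_{\beta_w}$; I would need to spell this out carefully (it is essentially the uniqueness of the Harder--Narasimhan-type filtration for this stability/order), and then separately check that the flag-within-each-vertex is determined, which uses that the dimension vectors $m_u\beta_u$ already fix the subspace dimensions, so no choice remains. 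Everything else — the fibration/bijection bookkeeping between the three lists and the upgrade from bijection to isomorphism using smoothness of $\OO_m$ — is routine.
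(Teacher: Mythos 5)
Your opening claim --- that the forgetful map $\Psi:\Sigma_{\ddelta''_m}\to\Fl_{\ddelta_m}\times\Rep_\gamma$ lands in $\Sigma_{\ddelta_m}$ --- is false. The underlying flag varieties coincide, but the incidence varieties do not: $\Sigma_{\ddelta_m}$ is the strict subset of $\Sigma_{\ddelta''_m}$ cut out by the openness condition, which is genuinely nontrivial here. Indeed Proposition~\ref{prop:reineke} gives $\pi_2(\Sigma_{\ddelta''_m})=\overline{\OO}_m$, which strictly contains $\OO_m$ whenever $\OO_m$ is not the open orbit, so $\Psi$ cannot have image inside $\Sigma_{\ddelta_m}$. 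Smallest example: $A_2$, $m=(0,1,0)$, $\gamma=(1,1)$; then $\Fl_{\ddelta''_m}$ is a point, there is no openness constraint, and $\Sigma_{\ddelta''_m}=\Rep_{(1,1)}=\Hom(\C,\C)$, whereas $\Sigma_{\ddelta_m}$ omits the zero map. The argument of Lemma~\ref{lem:fibr} does not transfer because there the subquotients of the \emph{finer} filtration are the rigid modules $M_{\beta_u}$, which forces the coarser subquotients to be $m_uM_{\beta_u}$; here the $\ddelta''_m$-subquotients are concentrated at single vertices, carry no $\CQ$-module structure, and therefore impose nothing on the coarser subquotient's isomorphism type beyond its dimension vector. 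Your second paragraph, which builds on that identification, inherits the problem.

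The good news is that your third paragraph is essentially a correct, self-contained proof, and a genuinely different route from the paper's, which simply says the statement ``is contained in the proof of~\cite[Theorem 2.2]{Re}.'' The splitting step is right: $\Ext^1(M_{\beta_i},M_{\beta_j})=0$ for $i>j$ (half of condition~(\ref{eqn:ordercond})) forces the filtration attached to any point of $\Sigma_{\ddelta_m}$ to split, so the module lies in $\OO_m$; and conversely the obvious filtration of $\oplus_u m_uM_{\beta_u}$ gives a point of $\Sigma_{\ddelta_m}$ because $\Ext(m_uM_{\beta_u},m_uM_{\beta_u})=0$ makes $m_uM_{\beta_u}$ the open orbit of $\Rep_{m_u\beta_u}$. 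For uniqueness, note that the relevant Hom-vanishing from~(\ref{eqn:ordercond}), namely $\Hom(M_{\beta_j},M_{\beta_i})=0$ for $j<i$, says \emph{earlier} indecomposables cannot map to \emph{later} ones (you stated the direction the other way around); this yields the canonical characterization $M^{(u)}=\sum_{w\le u}\sum_{\phi:M_{\beta_w}\to M}\mathrm{im}(\phi)$, which proves injectivity of $\pi_2|_{\Sigma_{\ddelta_m}}$. Finally, for the upgrade from bijection to isomorphism, it is cleaner than hunting for an explicit polynomial inverse to use equivariance: $\pi_2:\Sigma_{\ddelta_m}\to\OO_m$ is $\GL_\gamma$-equivariant and bijective onto a single orbit, so $\Sigma_{\ddelta_m}$ is itself a homogeneous $\GL_\gamma$-space and the map has the form $\GL_\gamma/H_1\to\GL_\gamma/H_2$ with $H_1\subset H_2$; bijectivity forces $H_1=H_2$ and hence an isomorphism.
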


\begin{proof}
This statement is contained in the proof \cite[Theorem 2.2]{Re}. 
\end{proof}

An alternative proof of Propositions \ref{prop:reineke}, \ref{prop:resolution2} can be obtained using \cite{King}---this argument will be carried out in a more general context in our proof of Theorem \ref{thm:Z} below.

\begin{proposition} \label{prop:resolution3}
The projection $\pi_2:\Fl_{\ddelta'_m} \times \Rep_\gamma \to \Rep_\gamma$, restricted to $\Sigma_{\ddelta'_m}$ is a fibration with fiber
\[
\Fl(m_1)\times \Fl(m_2) \times \ldots \times \Fl(m_N).
\]
over $\OO_m$, where $\Fl(m)$ denotes the full flag variety on $\C^m$.
\end{proposition}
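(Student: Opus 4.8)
The plan is to obtain this statement by \emph{factoring} the projection $\pi_2|_{\Sigma_{\ddelta'_m}}$ through the intermediate incidence variety $\Sigma_{\ddelta_m}$, and then combining Proposition \ref{prop:resolution2} with Lemma \ref{lem:fibr}. First I would observe that the lists $\ddelta_m$ and $\ddelta'_m$ of (\ref{eqn:lists}) are exactly the lists $\ggamma$ and $\ggamma'$ of Lemma \ref{lem:fibr} applied to the positive roots $\beta_1,\ldots,\beta_N$ with $m_1,\ldots,m_N$; the fact that $\beta_1<\ldots<\beta_N$ is a Reineke order plays no role here, since Lemma \ref{lem:fibr} holds for arbitrary positive roots. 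Hence Lemma \ref{lem:fibr} supplies a forgetful map $\Phi:\Fl_{\ddelta'_m}\times\Rep_\gamma\to\Fl_{\ddelta_m}\times\Rep_\gamma$ which carries $\Sigma_{\ddelta'_m}$ into $\Sigma_{\ddelta_m}$ and restricts to a fibration $\Phi:\Sigma_{\ddelta'_m}\to\Sigma_{\ddelta_m}$ with fiber $\Fl(m_1)\times\ldots\times\Fl(m_N)$.

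Next, since $\Phi$ is induced by a map of flag varieties and is the identity on the $\Rep_\gamma$-factor, the projections to $\Rep_\gamma$ commute with it: $\pi_2\circ\Phi=\pi_2$ as maps $\Fl_{\ddelta'_m}\times\Rep_\gamma\to\Rep_\gamma$. Restricting to $\Sigma_{\ddelta'_m}$ and using part (\ref{i}) of Lemma \ref{lem:fibr} (that $\Phi$ sends it into $\Sigma_{\ddelta_m}$) gives the factorization
\[
\pi_2|_{\Sigma_{\ddelta'_m}} \;=\; \bigl(\pi_2|_{\Sigma_{\ddelta_m}}\bigr)\circ\bigl(\Phi|_{\Sigma_{\ddelta'_m}}\bigr).
\]
By Proposition \ref{prop:resolution2} the first factor on the right is an isomorphism $\Sigma_{\ddelta_m}\to\OO_m$, and by the previous paragraph the second factor is a fibration onto $\Sigma_{\ddelta_m}$ with fiber $\Fl(m_1)\times\ldots\times\Fl(m_N)$. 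A fibration followed by an isomorphism is again a fibration with the same fiber, so $\pi_2|_{\Sigma_{\ddelta'_m}}:\Sigma_{\ddelta'_m}\to\OO_m$ is a fibration with the stated fiber, which is the claim. In particular the image of $\pi_2|_{\Sigma_{\ddelta'_m}}$ is exactly $\OO_m$ (not something larger), since $\Phi|_{\Sigma_{\ddelta'_m}}$ surjects onto $\Sigma_{\ddelta_m}$ and $\pi_2|_{\Sigma_{\ddelta_m}}$ surjects onto $\OO_m$.

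I expect no real obstacle here: all the substantive content has already been isolated in Lemma \ref{lem:fibr} (the geometry of the forgetful map on incidence varieties, via the Claim about filtrations with all subquotients $\cong M_\beta$) and in Proposition \ref{prop:resolution2} (Reineke's resolution being an isomorphism for the list $\ddelta_m$). The only points needing a moment of care are bookkeeping ones: verifying that the two lists in the Proposition genuinely match the hypotheses of Lemma \ref{lem:fibr}, and checking that "fibration" is used in the same sense (e.g. Zariski-locally trivial, or locally trivial fiber bundle) in Lemma \ref{lem:fibr} and in the present statement, so that post-composition with an isomorphism of varieties preserves it — which is immediate from the local-triviality definition.
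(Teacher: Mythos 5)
Your proposal is correct and matches the paper's argument exactly: the paper's one-line proof ("This follows from Proposition \ref{prop:resolution2}, using Lemma \ref{lem:fibr}") is precisely the factorization $\pi_2|_{\Sigma_{\ddelta'_m}} = (\pi_2|_{\Sigma_{\ddelta_m}})\circ(\Phi|_{\Sigma_{\ddelta'_m}})$ that you spell out, and your bookkeeping remarks (matching the two lists, composing a fibration with an isomorphism) are the intended details.
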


\begin{proof}
This follows from Proposition \ref{prop:resolution2}, using Lemma \ref{lem:fibr}.
\end{proof}

\section{Motivic charactersitic classes in cohomology and K theory}  

\subsection{Equivariant Chern-Schwartz-MacPherson class}\label{sec:introCSM}
Let the algebraic group $G$ act on the smooth algebraic variety $V$, and let $f:V\to \C$ be an invariant constructible function. Following works of Deligne, Grothendieck, MacPherson \cite{M}, Ohmoto \cite{O1, O2} defined the equivariant Chern-Schwartz-MacPherson (CSM) class $\csm(f)\in H_G^*(V)$.  If $X$ is an invariant constructible subset of $V$ and $\One_X$ is its indicator function, then one defines $\csm(X \subset V)=\csm(\One_X)$. If $V$ is clear from the context we write $\csm(X)$ for $\csm(X\subset V)$.

For reviews on the non-equivariant and the equivariant CSM classes and their role in algebraic geometry see \cite{AM1, AM2, O1, O2, FRcsm, AMSS}.
Here are some properties of CSM classes.
\begin{itemize}
\item (additivity) For the linear combination $af+bg$ of invariant constructible functions $f,g$ we have $\csm(af+bg)=a\csm(f)+b\csm(g)$.
\item (normalization) For a smooth, proper subvariety $i:X\subset V$ we have $\csm(X)=i_*(c(TX))$.
\item (push-forward property) If $\eta:W\to V$ is a proper equivariant morphism between smooth varieties then 
$ \eta_*(c(TW))= \sum_a a\csm(V_a)$, where
$V_a=\{x\in V:\chi(\eta^{-1}(x))=a\}$. 
\item (pull-back property) For $W$, $V$ smooth let $X\subset V$ be an invariant subvariety with an invariant Whitney stratification. Assume $\eta:W\to V$ is (equivariant and) transversal to the strata. Then $\csm(\eta^{-1}(X))/c(TW)=\eta^*(\csm(X)/c(TV))$.
\item (product property) We have $\csm(X\times Y \subset V\times W)=\csm(X\subset V)\csm(Y\subset W)$.
\item (integral property) For a smooth compact $V$ we have $\int_V \csm(V\subset V)=\chi(V)$.
\end{itemize} 

We will use these properties of CSM classes in the paper without reference. To get familiar with our notation consider the two-torus $\T=(\C^*)^2$ acting on $\C^2$ diagonally. Denote the $x$-axis ($y$-axis)---as a subset or as a bundle over the origin---by $X$ ($Y$). We have
\[
\csm(X\subset \C^2)=c(TX)e(Y)=(1+\alpha_1)\alpha_2 \  \in H^*_{  \T }(\C^2)=H^*(B\T )=\Z[\alpha_1,\alpha_2].
\]

\begin{example}\label{ex:coordinateplanes} \rm
Consider a torus $\T$ acting on $\C^n$, with (additive) weights $w_1,\ldots,w_n$ on the coordinate axes, i.e. $w_i\in H^2_{\T}(\C^n)=H^2(B\T)$. For $I\subset \{1,\ldots,n\}$ denote
\[
X_I=\{x\in \C^n : x_i=0 \text{ for } i\in I\},
\qquad
X^o_I=\{x\in \C^n : x_i=0 \text{ for } i\in I, x_j\not=0 \text{ for } j\not\in I\}.
\]
In $\T$-equivariant cohomology we have
\[
\csm(X_I)=\prod_{i\in I} w_i\prod_{j\not\in I} (1+w_j), \qquad
\csm(X^o_I)=\prod_{i\in I} w_i.
\]
In particular, $\csm(X^o_{\emptyset})=1$. 
\end{example}

\subsection{Equivariant motivic Chern class}
\label{sec:introCSM}
Let the algebraic group $G$ act on the smooth algebraic variety $V$, and let $f:X\to V$ be a morphism, and let $y$ be a variable. Following works of Brasselet, Sch{\"u}rmann, Yokura \cite{BSY},  the equivariant motivic Chern (MC) class $\mC(X\to V)\in K_G(V)[y]$ is defined in \cite{FRW, AMSS2} (see also the related \cite{W}). If $X\subset V$ and $V$ is clear from the context we write $\mC(X)$ for $\mC(X\subset V)$.

By K-theoretic total Chern class of a bundle $\xi$ we mean $c(\xi)=\lambda_y(\xi^*)=\oplus_{p=0}^{rk \xi} [\Lambda^p\xi^*]y^p$, and by K-theoretic Euler class we mean $e(\xi)=c(\xi)|_{y=-1}=\lambda_{-1}(\xi^*)$.  For example for a line bundle $L$ in K-theory we have $c(\xi)=1+y/\alpha$, $e(\xi)=1-1/\alpha$ where $\alpha$ is the class represented by $L$. 

Here are some properties of MC classes.
\begin{itemize}
\item (additivity) If $X=Y \sqcup U$ then $\mC(X\to V)=\mC(Y\to V)+\mC(U\to V)$.
\item (normalization) For a smooth, proper subvariety $i:X\subset V$ we have $\mC(X)=i_*(c(TX))$. 
\item (push-forward property) For a proper map $f:V\to V'$ we have $\mC(X\to V')=f_*\mC(X\to V)$.
\item (product property) We have $\mC(X\times Y \subset V\times W)=\mC(X\subset V)\mC(Y\subset W)$.
\item (rigidity) For a smooth compact $V$ we have $\int_V \mC(V\subset V)=\chi_y(V)$ (the $\chi_y$-genus of $M$; as usual, $\int_V$ stands for the push-forward map to a point).
\end{itemize} 

A pull-back property (not used in this paper) for MC classes is widely expected too, as a careful application of the Verdier-Riemann-Roch formula, cf. \cite[Remark 5.2]{FRW}, \cite{Sch}. The precise statement, based on the equivariant version of  \cite[Cor. 2.1, part (4)]{BSY}, is announced in \cite{AMSS2}.

We will use these properties of MC classes in the paper without reference. To get familiar with our notation consider the two-torus $\T=(\C^*)^2$ acting on $\C^2$ diagonally. Denote the $x$-axis ($y$-axis)---as a subset or as a bundle over the origin---by $X$ ($Y$). We have
\[
\mC(X\subset \C^2)=c(TX)e(Y)=(1+y/\alpha_1)(1-1/\alpha_2) \qquad \in K_{  \T }(\C^2)=R(\T)=\Z[\alpha_1^{\pm 1},\alpha_2^{\pm 1}].
\]

\begin{example}\label{ex:coordinateplanes_mC} \rm
Consider a torus $\T=(\C^*)^r$ acting on $\C^n$, with multiplicative weights $w_1,\ldots,w_n$ on the coordinate axes, i.e. 
$w_i\in K_{\T}(\C^n)=R(\T)=\Z[\alpha_1^{\pm1},\ldots,\alpha_r^{\pm1}]$. For $I\subset \{1,\ldots,n\}$ denote
\[
X_I=\{x\in \C^n : x_i=0 \text{ for } i\in I\},
\qquad
X^o_I=\{x\in \C^n : x_i=0 \text{ for } i\in I, x_j\not=0 \text{ for } j\not\in I\}.
\]
In $\T$-equivariant K-theory we have
\[
\mC(X_I)=\prod_{i\in I} \left(1-\frac{1}{w_i}\right) \prod_{j\not\in I}\left( 1+\frac{y}{w_j}\right), \qquad
\mC(X^o_I)=\prod_{i\in I}\left(1-\frac{1}{w_i}\right)\prod_{j\not\in I} \frac{1+y}{w_j}.
\]
In particular, $\mC(X^o_{\emptyset})=(1+y)^n/\prod_{i=1}^n w_i$. 
\end{example}

\section{The CSM and MC classes of incidence varieties and their projections} 
Let $Q$ be a Dynkin quiver and $\gamma$ a dimension vector. We will be concerned with the algebras 
\begin{align} \label{def:HHKK}
\HH^Q_\gamma= &H^*_{\GL_\gamma}(\Rep_\gamma;\C)=
\C[\alpha_{i,u}]_{i\in Q_0, u=1,\ldots,\gamma(i)}^{S_{\gamma}}
\\
\KK^Q_\gamma=&K_{\GL_\gamma}(\Rep_\gamma)\otimes \C[y] 
=
\C[\alpha_{i,u}^{\pm 1},y]_{i\in Q_0, u=1,\ldots,\gamma(i)}^{S_{\gamma}}
\end{align}
where 
\begin{itemize}
\item in the first line $\alpha_{i,1},\ldots,\alpha_{i,\gamma(i)}$ are the Chern roots of the tautological vector bundle over $\BGL_{\gamma(i)}(\C)$;
\item in the second line $\alpha_{i,1},\ldots,\alpha_{i,\gamma(i)}$ are the K-theoretic Chern roots of the group $\GL_{\gamma(i)}(\C)$, that is, $\sum_{u=1}^{\gamma(i)} \alpha_u$ is the standard representation of the group $\GL_{\gamma(i)}(\C)$;
\item $S_{\gamma}=\prod_{i\in Q_0} S_{\gamma(i)}$, the $S_{\gamma(i)}$ factor permutes the $\alpha_{i,*}$ variables.
\end{itemize}

When $Q$ is obvious, we will drop the upper index from $\HH_\gamma^Q$, $\KK^Q_\gamma$. In the rest of this section, after introducing the new characteristic classes $c^o, C^o$, we give formulas for the CSM and MC classes of $\Sigma_\ggamma$, and their push-forwards.

\subsection{The $c^o, C^o$ characteristic classes} \label{sec:co}

\begin{definition}
Let the open orbit of $\Rep_\gamma$ be denoted by $\OO^o_\gamma$. Define
\[
c_\gamma^o=\csm(\OO^o_{\gamma}\subset \Rep_\gamma) \in \HH^Q_\gamma,
\]
\[
C_\gamma^o=\mC(\OO^o_{\gamma}\subset \Rep_\gamma) \in \KK^Q_\gamma.
\]
\end{definition}
Note that we may regard $c^o_\gamma$ or $C^o_\gamma$ as characteristic classes: they can be evaluated on a set of bundles $\{ \E_{i} \}_{i\in Q_0}$ if $\dim \E_i=\gamma(i)$ are given over a space.

\begin{example} \rm \label{exA2}
For $Q=A_2=(1 \to 2)$,
we have the cohomology characteristic classes
\begin{align*}
c^o_{(0,n)}=&1, \qquad c^o_{(n,0)}=1, \qquad c^o_{(1,1)}=1, \\
c^o_{(1,2)}=& 1+\alpha_{2,1}+\alpha_{2,2}-2\alpha_{1,1} \\
c^o_{(2,1)}=& 1+2\alpha_{1,1}-\alpha_{2,1}-\alpha_{2,2} \\
c^o_{(2,2)}=& 1+(\alpha_{2,1}+\alpha_{2,2}-\alpha_{1,1}-\alpha_{1,2})-
              (\alpha_{2,1}+\alpha_{2,2})(\alpha_{1,1}+\alpha_{1,2})+
              2(\alpha_{2,1}\alpha_{2,2}+\alpha_{1,1}\alpha_{1,2}),
\end{align*}
and the K-theoretic characteristic classes
\begin{align*}
C^o_{(0,n)}=&1, \qquad C^o_{(n,0)}=1, \qquad C^o_{(1,1)}=(1+y)\frac{\alpha_{1,1}}{\alpha_{2,1}}, \\
C^o_{(1,2)}=& (1+y)\left(\frac{\alpha_{1,1}}{\alpha_{2,1}}+\frac{\alpha_{1,1}}{\alpha_{2,2}}\right)+(y^2-1)\frac{\alpha_{1,1}^2}{\alpha_{2,1}\alpha_{2,2}} \\
C^o_{(2,1)}=& (1+y)\left(\frac{\alpha_{1,1}}{\alpha_{2,1}}+\frac{\alpha_{1,2}}{\alpha_{2,1}}\right)+(y^2-1)\frac{\alpha_{1,1}\alpha_{1,2}}{\alpha_{2,1}^2}  \\
C^o_{(2,2)}=& (1+y)^2\frac{\alpha_{1,1}\alpha_{1,2}}{\alpha_{2,1}\alpha_{2,2}}
\left(
1-y+y\left(\frac{\alpha_{1,1}}{\alpha_{2,1}}+\frac{\alpha_{1,1}}{\alpha_{2,2}}+\frac{\alpha_{1,2}}{\alpha_{2,1}}+\frac{\alpha_{1,2}}{\alpha_{2,2}}\right)
+y(y-1)\frac{\alpha_{1,1}\alpha_{1,2}}{\alpha_{2,1}\alpha_{2,1}}
\right).
\end{align*}
\end{example}

\begin{example}\rm \label{ex:co1}
Let $Q$ be a Dynkin quiver of type $A$, and let $\beta\in \N^Q$ be a dimension vector corresponding to a positive root. That is, $\beta=\beta_{uv}=(0,\ldots,0,1,\ldots,1,0,\ldots,0)=\sum_{i=u}^v \ep_i$, for some $1\leq u\leq v\leq n$. Then, according to Example \ref{ex:coordinateplanes}, we have $c^o_\beta=1$. 
Let the edge $a_i$ be between vertex $i$ and $i+1$ (for $i=1,\ldots,n-1$) with some orientation.  Then, according to Example \ref{ex:coordinateplanes_mC}, we have
\[
C^o_{\beta_{uv}}=(1+y)^{v-u}\prod_{i=u}^{v-1} \frac{\alpha_{t(a_i),1}}{\alpha_{h(a_i),1}}.
\]
\end{example}

\begin{example} \label{D4D5E6} \rm
Consider the quivers 
\[
\begin{tikzpicture}[scale=0.6]
\node (A) at (-.2,0) {1};
\node (B) at (1,0) {2};
\node (C) at (2.5,0) {3};
\node (D) at (4,0) {4};
\draw [->] (A) to [out=40,in=140] (C);
\draw [->] (B) to [out=20,in=160] (C);
\draw [->] (D) -- (C);
\end{tikzpicture}
\qquad\qquad
\begin{tikzpicture}[scale=0.6]
\node (A) at (-.2,0) {1};
\node (B) at (1,0) {2};
\node (C) at (2.5,0) {3};
\node (D) at (4,0) {4};
\node (E) at (5.5,0) {5};
\draw [->] (A) to [out=40,in=140] (C);
\draw [->] (B) -- (C);
\draw [->] (D) -- (C);
\draw [->] (E) -- (D);
\end{tikzpicture}
\qquad\qquad
\begin{tikzpicture}[scale=0.6]
\node (A) at (-.5,0) {1};
\node (B) at (1,0) {2};
\node (C) at (2.5,0) {3};
\node (D) at (4,0) {4};
\node (E) at (5.5,0) {5};
\node (F) at (6.6,0) {6};
\draw [->] (A) to (B);
\draw [->] (B) to (C);
\draw [->] (D) to (C);
\draw [->] (E) to (D);
\draw [->] (F) to [out=140,in=40] (C);
\end{tikzpicture}
\]
with underlying Dynkin graphs $D_4$, $D_5$, $E_6$---for simplicty let us call them $Q=D_4, D_5, E_6$.
For $Q=D_4$ and $\beta=(1,1,2,1)$ we have 
\[
c^o_\beta=(1+\alpha_{3,1}-\alpha_{3,2})(1+\alpha_{3,2}-\alpha_{3,1}),
\]
and temporarily using the notation $a=\alpha_{1,1}, b=\alpha_{2,1},c=\alpha_{4,1},d_i=\alpha_{3,i}$ we have
\[
C^o_\beta=
\frac{(1+y)^4abc}{(d_1d_2)^3}
\left(
(1-y)(a+b+c)d_1d_2+(d_1+d_2)
(y(ab+ac+bc)-d_1d_2)-
y(1-y)abc
\right).
\]
For $Q=D_5$ and $\beta=(1,1,2,2,1)$ we have
\[
c^o_\beta=(1+\alpha_{3,1}-\alpha_{3,2})(1+\alpha_{3,2}-\alpha_{3,1})(1+\alpha_{4,1}-\alpha_{4,2})(1+\alpha_{4,2}-\alpha_{4,1}).
\]
For $Q=E_6$ and $\beta=(1,2,3,2,1,2)$ we have
\begin{multline*}
c^o_\beta=(1+\alpha_{2,1}-\alpha_{2,2})(1+\alpha_{2,2}-\alpha_{2,1})(1+\alpha_{4,1}-\alpha_{4,2})(1+\alpha_{4,2}-\alpha_{4,1})(1+\alpha_{3,1}-\alpha_{3,2})\\
(1+\alpha_{3,2}-\alpha_{3,1})(1+\alpha_{3,1}-\alpha_{3,3})(1+\alpha_{3,3}-\alpha_{3,1})(1+\alpha_{3,2}-\alpha_{3,3})(1+\alpha_{3,3}-\alpha_{3,2}).
\end{multline*}
\end{example}

For the proof of these claims, and more discussion on $c^o_\beta$ and $C^o_\beta$ classes see Section \ref{sec:coCo}.

\subsection{The CSM class of $\Sigma_{\ggamma}$}


\begin{proposition} \label{prop:classofSigma}
For a list of dimension vectors $\ggamma=(\gamma_1,\ldots,\gamma_r)$ with $\gamma=\sum_{u=1}^r \gamma_u$, in $\GL_\gamma$-equivariant cohomology and K theory we have
\begin{align}\label{eqn:inprop}
\csm(\Sigma_{\ggamma}\subset \Fl_{\ggamma}\times \Rep_\gamma)=
e(\G_\ggamma^{(1)}) c(\G_\ggamma^{(2)}) c(\G_\ggamma^{(3)})
\prod_{u=1}^r c_{\ggamma_u}^o\left( \{\SSS_{i,u}\}_{i\in Q_0} \right),
\\
\mC(\Sigma_{\ggamma}\subset \Fl_{\ggamma}\times \Rep_\gamma)=
e(\G_\ggamma^{(1)}) c(\G_\ggamma^{(2)}) c(\G_\ggamma^{(3)})
\prod_{u=1}^r C_{\ggamma_u}^o\left( \{\SSS_{i,u}\}_{i\in Q_0} \right).
\end{align}
\end{proposition}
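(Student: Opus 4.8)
The strategy is to express $\Sigma_{\ggamma}$ as the total space of a fiber bundle over the base $\Fl_{\ggamma}$, whose fiber over a point of $\Fl_{\ggamma}$ is the subset of $\Rep_\gamma$ consisting of those $(\phi_a)$ that are consistent with the given flags \emph{and} such that the induced subquotient maps lie in the open orbits $\OO^o_{\gamma_u}$. The first step is to analyze this fiber. Fixing a point $(V_{i,u})\in\Fl_{\ggamma}$, the consistency condition $\phi_a(V_{t(a),u})\subset V_{h(a),u}$ cuts out an affine subspace of $\Rep_\gamma$: in terms of block-decomposing each $\phi_a$ along the flags, consistency forces the ``strictly upper'' blocks (those mapping $\SSS_{t(a),v}$ to $\SSS_{h(a),w}$ with $v>w$) to vanish, so the consistent locus is linearly isomorphic to $\G^{(2)}_\ggamma\oplus\G^{(4)}_\ggamma$ (the blocks with $v<w$ and the diagonal blocks $v=w$). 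Within this, the openness condition only restricts the $\G^{(4)}_\ggamma$-part: it requires that for each $u$ the collection of diagonal blocks $(\tilde\phi_{a,u})_a$ lie in $\OO^o_{\gamma_u}\subset\Rep_{\gamma_u}$, while the $\G^{(2)}_\ggamma$-blocks are completely free. Hence fiberwise $\Sigma_\ggamma$ is an open subset of a vector space, namely $\G^{(2)}_\ggamma\oplus\bigl(\prod_u \OO^o_{\gamma_u}\bigr)$.

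\textbf{From the fiber picture to the formula.} Next I would package this fiberwise description globally. The consistency analysis shows that $\Sigma_\ggamma$, as a variety over $\Fl_\ggamma$, sits inside the total space of the bundle $\G^{(2)}_\ggamma\oplus\G^{(4)}_\ggamma$ over $\Fl_\ggamma$ as the preimage, under the projection to $\G^{(4)}_\ggamma$, of the open subvariety $\prod_u \OO^o_{\gamma_u}$ sitting fiberwise inside $\G^{(4)}_\ggamma=\bigoplus_u\bigl(\bigoplus_a\Hom(\SSS_{t(a),u},\SSS_{h(a),u})\bigr)$ — and the $u$-th summand of $\G^{(4)}_\ggamma$ is exactly the bundle over $\Fl_\ggamma$ whose fiber is $\Rep_{\gamma_u}$ with the $\GL_{\gamma_u}$-action replaced by the structure group acting through $\{\SSS_{i,u}\}_i$. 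Now compute the CSM (resp.\ MC) class in two stages. Inside $\Fl_\ggamma\times\Rep_\gamma$, the sub-bundle $\G^{(2)}_\ggamma\oplus\G^{(4)}_\ggamma$ is cut out by the vanishing of the complementary bundle, which splits as $\G^{(1)}_\ggamma\oplus\G^{(3)}_\ggamma$ (the strictly-upper quiver-edge blocks and the strictly-upper same-vertex blocks — I should double-check the indexing conventions against \eqref{eqn:Gdef}, adjusting which of $v<w$ versus $v>w$ counts as ``consistent'' so that the complement really is $\G^{(1)}\oplus\G^{(3)}$). So $\Sigma_\ggamma$ is, inside the ambient smooth space, the zero section of $\G^{(1)}_\ggamma\oplus\G^{(3)}_\ggamma$ restricted over an open-orbit condition in $\G^{(4)}_\ggamma$.

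\textbf{Assembling the characteristic class.} The CSM/MC class of such a locus is computed using: (a) the normalization and pull-back/transversality behavior that gives the class of a sub-vector-bundle $W\subset E$ (zero section of the complementary bundle) as $e(E/W)$ times the pullback of the class of the base — this is the source of the $e(\G^{(1)}_\ggamma)$ factor in cohomology, $e(\G^{(1)}_\ggamma)$ in $K$-theory; (b) for the \emph{non-open} directions that survive (the bundle $\G^{(2)}_\ggamma\oplus\G^{(3)}_\ggamma$ — wait, I need to be careful: the surviving affine directions of the fiber that are \emph{not} subject to the open condition are $\G^{(2)}_\ggamma$ only, but the class of $\Fl_\ggamma$ itself contributes; actually the factors $c(\G^{(2)}_\ggamma)c(\G^{(3)}_\ggamma)$ arise as the CSM class of a vector bundle stratum, using $\csm$ of an open/whole vector bundle over a base equals $c$ of that bundle times the base class, cf.\ Example \ref{ex:coordinateplanes}); and (c) for the open-orbit directions, the product property of CSM/MC classes together with the very definition $c^o_{\gamma_u}=\csm(\OO^o_{\gamma_u}\subset\Rep_{\gamma_u})$, $C^o_{\gamma_u}=\mC(\OO^o_{\gamma_u}\subset\Rep_{\gamma_u})$ evaluated on the bundles $\{\SSS_{i,u}\}_i$, which is legitimate because, as remarked after the definition of $c^o,C^o$, these are genuine characteristic classes that can be substituted into any family of bundles of the right ranks. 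Multiplying these contributions and using multiplicativity of $c$ over direct sums gives precisely the stated product $e(\G^{(1)}_\ggamma)c(\G^{(2)}_\ggamma)c(\G^{(3)}_\ggamma)\prod_u c^o_{\ggamma_u}(\{\SSS_{i,u}\})$, and identically for $\mC$.

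\textbf{Main obstacle.} The delicate point is step (a)--(c): CSM and MC classes are \emph{not} simply multiplicative over arbitrary fibrations, so I cannot just ``take the class of the base times the class of the fiber.'' The rigorous route is to realize $\Sigma_\ggamma$ as (open subset of) a \emph{vector bundle} over a smooth base, use the product property of CSM/MC only in the genuinely product situations it applies, and reduce everything else to the normalization property (class of a vector bundle stratum inside another) plus the known computation in Examples \ref{ex:coordinateplanes} and \ref{ex:coordinateplanes_mC}. Concretely, I expect to argue: $\Sigma_\ggamma \to \Fl_\ggamma$ is a Zariski-locally-trivial fiber bundle with fiber a vector space times $\prod_u\OO^o_{\gamma_u}$; over a trivializing chart the product property applies verbatim, and the resulting local formula is manifestly the restriction of the claimed global characteristic-class expression; since both sides are pulled back from $\Fl_\ggamma\times\Rep_\gamma$ (a space with free-enough cohomology/$K$-theory) and agree locally, they agree globally. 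The one genuinely non-formal input is verifying that the ambient inclusion $\Sigma_\ggamma\hookrightarrow \Fl_\ggamma\times\Rep_\gamma$ interacts correctly with the normalization/Euler-class factor $e(\G^{(1)}_\ggamma)$ — i.e.\ that the excess directions are exactly $\G^{(1)}_\ggamma$ and enter with an honest Euler class (not a total Chern class) because $\Sigma_\ggamma$ is cut out there by a \emph{regular} section, while the $\G^{(2)}_\ggamma,\G^{(3)}_\ggamma$ directions are ``filled in'' and hence contribute the full $c(-)$. Getting these three bundle roles ($e$ for $\G^{(1)}$, $c$ for $\G^{(2)}$ and $\G^{(3)}$, $c^o/C^o$ for $\G^{(4)}$) unambiguously pinned down from the sign conventions in \eqref{eqn:Gdef} and the direction of the inequalities $h(a)<t(a)$ is where I expect to spend the most care.
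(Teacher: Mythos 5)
Your geometric picture is correct and is a genuinely different route than the paper's: you analyze $\Sigma_\ggamma$ directly as (an open locus inside) a sub-vector-bundle of $\Fl_\ggamma\times\Rep_\gamma\to\Fl_\ggamma$, with the consistent locus being the total space of $\G^{(2)}_\ggamma\oplus\G^{(4)}_\ggamma$, the excess normal directions being $\G^{(1)}_\ggamma$, and the openness condition living only in the $\G^{(4)}_\ggamma$ directions. The paper instead passes to the maximal torus, identifies the germ of $\Sigma_\ggamma$ at a torus fixed point $x=((V_{i,u}),0)$ with a direct product $\Y_1\times\Y_2\times\Y_3\times\prod_u\Y_{4,u}$ inside $T_x(\Fl_\ggamma\times\Rep_\gamma)$, computes the restriction of both sides of \eqref{eqn:inprop} to $x$ using the product and normalization properties, and concludes by equivariant localization. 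The indexing discrepancy you flag ($v<w$ versus $v>w$) is self-cancelling in your text and matches the paper's conventions once sorted: consistency kills $\G^{(1)}_\ggamma$ (the $v<w$ blocks), so the consistent locus is $\G^{(2)}_\ggamma\oplus\G^{(4)}_\ggamma$ — your final bundle roles ($e$ for $\G^{(1)}$, $c$ for $\G^{(2)},\G^{(3)}$, $c^o/C^o$ for $\G^{(4)}$) are right.

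The genuine gap is the local-to-global step. You write that since both sides agree over trivializing Zariski charts, they agree globally — but restriction to an open cover does \emph{not} determine a (equivariant) cohomology or $K$-theory class; a class can be nonzero yet restrict to zero on each chart (the fundamental class of a circle on contractible opens is the baby example), and in the equivariant setting the issue compounds because a trivializing chart need not be $\GL_\gamma$- or even $\T_\gamma$-invariant, so restricting to it loses equivariance entirely. What \emph{does} work as a local-to-global device here is exactly what the paper invokes: restriction to all $\T_\gamma$-fixed points is injective because $\Fl_\ggamma\times\Rep_\gamma$ is equivariantly formal. So either switch to restricting to torus fixed points (where your product picture already gives the right local answer — this would essentially reproduce the paper's proof), or avoid localization altogether by making the fibration argument honest: compute $\csm(\Sigma_\ggamma)$ by iterating the stated pull-back property of CSM along the smooth submersions $\tilde\Sigma_\ggamma\to\G^{(4)}_\ggamma\to\Fl_\ggamma$ (where $\tilde\Sigma_\ggamma$ denotes the consistent locus), use normalization for the inclusion $\tilde\Sigma_\ggamma\hookrightarrow\Fl_\ggamma\times\Rep_\gamma$ to get the factor $e(\G^{(1)}_\ggamma)$, and use the universal (characteristic-class) nature of $c^o_{\gamma_u}$ — i.e.\ pull back the universal computation $\csm(\OO^o_{\gamma_u}\subset\Rep_{\gamma_u})=c^o_{\gamma_u}$ in $H^*_{\GL_{\gamma_u}}$ along the classifying map of the bundles $\{\SSS_{i,u}\}_i$ — to get the $\G^{(4)}$-factor. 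The latter route would also buy a proof that does not rely on the torus-fixed-point structure of $\Fl_\ggamma$, but it requires the pull-back property for MC classes, which the paper notes is only announced, not yet available with a reference, so for the $K$-theoretic statement the localization route is the safer one.
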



\begin{proof}
The group $\GL_\gamma$ acts on $\Fl_\ggamma\times \Rep_\gamma$, with $\Sigma_\ggamma$ invariant. Consider this action restricted to its maximal torus $\T_\gamma=(\C^*)^{\sum_{i\in Q_0} \gamma(i)}$. Let $x=((V_{i,u}),0) \in \Fl_\ggamma\times \Rep_\gamma$ be a point fixed by $\T$. The tangent space $T_x(\Fl_\ggamma\times \Rep_\gamma)$ splits to the direct sum of $\T$-invariant subspaces
\begin{align}
\V_1\ & =\bigoplus_{a\in Q_1} \bigoplus_{v<w} \Hom(V_{t(a),v},V_{h(a),w}), \nonumber  \\
\V_2\ & = \bigoplus_{a\in Q_1} \bigoplus_{v>w} \Hom(V_{t(a),v},V_{h(a),w}), \label{eqn:Vdef} \\
\V_3\ & = T_x\Fl_\ggamma, \nonumber\\
\V_{4,u}& =\bigoplus_{a\in Q_1} \Hom(V_{t(a),u},V_{h(a),u}), \qquad \text{for } u=1,\ldots,r. \nonumber
\end{align}
Consider the following subsets of $\V_1,\V_2,\V_3,\V_{4,u}$
\[
\Y_1=\{0\} \subset \V_1,\qquad
\Y_2=\V_2,\qquad
\Y_3=\V_3,
\]
\[
\Y_{4,u}=\OO_{\gamma_u}=(\text{open orbit of } \Rep_{\gamma_u}) \subset \V_{4,u}.
\]
The definition of $\Sigma_\ggamma$ implies that in a neighborhood of $x$ the identification
$\Fl_\ggamma \times \Rep_\gamma \to T_x(\Fl_\ggamma \times \Rep_\gamma)$ can be chosen so that $\Sigma_\ggamma$ is mapped to the direct product
\[
\Y_1\times \Y_2 \times \Y_3 \times \mathop{\times}_{u=1}^r \Y_{4,u}.
\]
Therefore, the restriction of $\csm(\Sigma_{\ggamma}\subset \Fl_{\ggamma}\times \Rep_\gamma)$ to $x$ in $\T$-equivariant cohomology is
\begin{equation}\label{eqn:therest}
e(\V_1) c(\V_2) c(\V_3) \prod_{u=1}^r c^o_{\gamma_u}\left( (V_{i,u})_{i\in Q_0} \right).
\end{equation}
Here we used the product property of CSM classes, as well as the facts
$\csm(\V\subset \V)=c(\V)$, $\csm(\{0\}\subset \V)=e(\V)$ for a representation $\V$.
The right hand side of (\ref{eqn:inprop}) has the same restriction (\ref{eqn:therest}) to $x$, hence---by equivariant localization---the cohomological statement of the theorem is proved. The K-theoretic statement is proved the same way, by replacing $c^o$ with $C^o$.
\end{proof}

\subsection{Equivariant localization formulas for $\pi_{2*}(\csm(\Sigma_{\ggamma}))$ and $\pi_{2*}(\mC(\Sigma_{\ggamma}))$}
 \label{sec:3part}

For a list of dimension vectors $\ggamma=(\gamma_1,\ldots,\gamma_r)$ with $\sum_{j=1}^r \gamma_j=\gamma$ consider the variables in $\HH^Q_\gamma$ (or $\KK^Q_\gamma$), namely $\alpha_{i,u}$ for $i\in Q_0, u=1,\ldots,\gamma(i)$. By a $\ggamma$-shuffle of these variables we mean $S=(S_i)_{i\in Q_0}$ where
\begin{itemize}
\item $S_i=(S_{i,1},\ldots,S_{i,r})$;
\item $S_{i,1},\ldots,S_{i,r}$ are disjoint subsets of $\{\alpha_{i,1},\ldots,\alpha_{i,\gamma(i)}\}$;
\item $|S_{i,u}|=\gamma_u(i)$ for all $u=1,\ldots,r$.
\end{itemize}
Define
\begin{align*}
\fac_1(S)&=\prod_{a\in Q_1} \prod_{v<w} \prod_{\omega\in {S}_{h(a),w}} \prod_{\alpha\in     {S}_{t(a),v}}  (\omega-\alpha),
&  
\fac'_1(S)&=\prod_{a\in Q_1} \prod_{v<w} \prod_{\omega\in {S}_{h(a),w}} \prod_{\alpha\in     {S}_{t(a),v}}  (1-\alpha/\omega),\\
\fac_2(S)&=\prod_{a\in Q_1} \prod_{v<w} \prod_{\omega\in     {S}_{h(a),v}} \prod_{\alpha\in {S}_{t(a),w}}  (1+\omega-\alpha), 
& 
\fac'_2(S)&=\prod_{a\in Q_1} \prod_{v<w} \prod_{\omega\in     {S}_{h(a),v}} \prod_{\alpha\in {S}_{t(a),w}}  (1+y\alpha/\omega), 
\\
\fac_3(S)&=\prod_{i\in Q_0} \prod_{v<w} \prod_{\omega\in {S}_{i,w}} \prod_{\alpha\in {S}_{i,v}} \frac{1+\omega-\alpha}{\omega-\alpha}, 
& 
\fac'_3(S)&=\prod_{i\in Q_0} \prod_{v<w} \prod_{\omega\in {S}_{i,w}} \prod_{\alpha\in {S}_{i,v}} \frac{1+y\alpha/\omega}{1-\alpha/\omega},\\
\fac_4(S)&=\prod_{u=1}^r c^o_{\gamma_u}\left( \{S_{i,u}\}_{i\in Q_0}\right),
&
\fac'_4(S)&=\prod_{u=1}^r C^o_{\gamma_u}\left( \{S_{i,u}\}_{i\in Q_0}\right).
\end{align*}

\begin{proposition} \label{prop:pi2}
Let $\ggamma=(\gamma_1,\ldots,\gamma_r)$ be a list of dimension vectors with $\sum_{u=1}^r \gamma_u=\gamma$. In $\HH^Q_\gamma$, $\KK^Q_\gamma$  we have
\begin{equation}\label{eqn:pi2H}
\pi_{2*}(\csm(\Sigma_{\ggamma}\subset \Fl_{\ggamma}\times \Rep_\gamma)) =\sum_{S \text{ is a } \ggamma\text{-shuffle}}
\fac_1(S)\fac_2(S)\fac_3(S)\fac_4(S),
\end{equation}
\begin{equation}\label{eqn:pi2K}
\pi_{2*}(\mC(\Sigma_{\ggamma}\subset \Fl_{\ggamma}\times \Rep_\gamma))=\sum_{S \text{ is a } \ggamma\text{-shuffle}}
\fac'_1(S)\fac'_2(S)\fac'_3(S)\fac'_4(S).
\end{equation}
\end{proposition}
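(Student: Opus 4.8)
The plan is to combine the closed formula for $\csm(\Sigma_{\ggamma}\subset\Fl_{\ggamma}\times\Rep_\gamma)$ of Proposition~\ref{prop:classofSigma} with equivariant localization for the Gysin push-forward $\pi_{2*}$. First I would note that $\pi_2:\Fl_{\ggamma}\times\Rep_\gamma\to\Rep_\gamma$ is smooth and proper, its fiber being the compact variety $\Fl_{\ggamma}$, so that (since $\Rep_\gamma$ is $\GL_\gamma$-equivariantly contractible) $\pi_{2*}$ is just integration along the fiber $\Fl_{\ggamma}$. Because push-forward commutes with restriction to the maximal torus $\T_\gamma\subset\GL_\gamma$, and $\HH^Q_\gamma\to H^*_{\T_\gamma}(\Rep_\gamma)$ is injective with image the $S_\gamma$-invariants, it suffices to prove (\ref{eqn:pi2H}) after restricting to $\T_\gamma$; and the right-hand side is automatically $S_\gamma$-invariant, since permuting the variables $\alpha_{i,\ast}$ merely permutes the set of $\ggamma$-shuffles.

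Next I would identify the $\T_\gamma$-fixed locus of $\Fl_{\ggamma}$. A flag (\ref{eqn:chain}) at a vertex $i$ is fixed by the coordinate torus of $\C^{\gamma(i)}$ exactly when each $V_{i,u}$ is a coordinate subspace, i.e.\ when one is given an ordered partition of $\{\alpha_{i,1},\ldots,\alpha_{i,\gamma(i)}\}$ into blocks $S_{i,1},\ldots,S_{i,r}$ with $|S_{i,u}|=\gamma_u(i)$; running over $i\in Q_0$, the $\T_\gamma$-fixed points of $\Fl_{\ggamma}$ are thus in bijection with the $\ggamma$-shuffles $S$, and at the point $S$ the bundle $\SSS_{i,u}$ restricts to $\bigoplus_{\alpha\in S_{i,u}}\C_\alpha$. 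Hence the tangent space $T_S\Fl_{\ggamma}=\bigoplus_{i\in Q_0}\bigoplus_{v<w}\Hom(\SSS_{i,v},\SSS_{i,w})|_S$ has equivariant Euler class $\prod_{i\in Q_0}\prod_{v<w}\prod_{\alpha\in S_{i,v}}\prod_{\omega\in S_{i,w}}(\omega-\alpha)$.

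Then I would restrict the class of Proposition~\ref{prop:classofSigma} to $S$. Reading the weights off from (\ref{eqn:Gdef}), the bundles $\G^{(1)}_{\ggamma},\G^{(2)}_{\ggamma},\G^{(3)}_{\ggamma}$ restrict to sums of $\T_\gamma$-eigenlines, giving $e(\G^{(1)}_{\ggamma})|_S=\fac_1(S)$, $c(\G^{(2)}_{\ggamma})|_S=\fac_2(S)$ (after relabeling $v\leftrightarrow w$ so that the range $v>w$ becomes $v<w$, with head and tail exchanged in the inner products), and $c(\G^{(3)}_{\ggamma})|_S=\prod_{i\in Q_0}\prod_{v<w}\prod_{\alpha\in S_{i,v}}\prod_{\omega\in S_{i,w}}(1+\omega-\alpha)$; moreover $c^o_{\gamma_u}(\{\SSS_{i,u}\})|_S=c^o_{\gamma_u}(\{S_{i,u}\}_{i\in Q_0})$ because $c^o$ is a characteristic class. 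Dividing the restricted class by $e(T_S\Fl_{\ggamma})$, the $\G^{(3)}$-contribution becomes precisely $\fac_3(S)$ while the other factors give $\fac_1(S)\fac_2(S)\fac_4(S)$; summing over $S$ and applying the Atiyah--Bott--Berline--Vergne formula $\pi_{2*}(x)=\sum_S x|_S/e(T_S\Fl_{\ggamma})$ gives (\ref{eqn:pi2H}).

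The K-theoretic identity (\ref{eqn:pi2K}) is proved in the same way, replacing cohomological localization by its K-theoretic (Atiyah--Segal) counterpart $\pi_{2*}(x)=\sum_S x|_S/\lambda_{-1}((T_S\Fl_{\ggamma})^*)$ and using $c(\xi)=\lambda_y(\xi^*)$, $e(\xi)=\lambda_{-1}(\xi^*)$ to compute all the restrictions; the factors produced are exactly the primed ones $\fac'_j(S)$. I expect the only real obstacle to be bookkeeping: keeping the $t(a)/h(a)$ conventions on edges and the ordering conventions on flag steps consistent among (\ref{eqn:Gdef}), the tangent-weight computation, and the definitions of the $\fac_j$ and $\fac'_j$. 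One should also observe that, although each summand on the right is a priori only a rational function in the $\alpha$'s, the sum actually lies in $\HH^Q_\gamma$ (resp.\ $\KK^Q_\gamma$) --- which is automatic, the left-hand side being a Gysin push-forward.
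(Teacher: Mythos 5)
Your proposal is correct and takes the same approach as the paper: the paper's proof is a one-line remark that the formulas follow from Proposition~\ref{prop:classofSigma} by cohomological or K-theoretic equivariant localization. Your write-up simply spells out what that sentence compresses --- the enumeration of $\T_\gamma$-fixed points of $\Fl_{\ggamma}$ by $\ggamma$-shuffles, the identification of $\G^{(3)}_{\ggamma}$ with the relative tangent bundle of $\pi_2$, and the weight computations that turn the restricted Euler/Chern classes divided by $e(T_S\Fl_{\ggamma})$ (resp.\ $\lambda_{-1}((T_S\Fl_{\ggamma})^*)$) into $\fac_1\fac_2\fac_3\fac_4$ (resp.\ $\fac'_1\fac'_2\fac'_3\fac'_4$).
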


\begin{proof}
The formulas follow from Proposition \ref{prop:classofSigma} as the push-forward map $\pi_{2*}$ is evaluated by cohomological or K-theoretic equivariant localization.
\end{proof}

\subsection{The CSM and MC classes of quiver orbits of Dynkin type} \label{sec:CSMclasses}
As before, $Q$ is a Dynkin quiver, $\beta_1<\ldots<\beta_N$ is the Reineke order of positive roots, each identified with its dimension vector. Let $(m_j)_{j=1,\ldots,N}$ be a Konstant partition of the dimension vector $\gamma=\sum_{j=1}^N m_j\beta_j$. Let $\OO_m$ be the corresponding orbit in $\Rep_\gamma$. 

\begin{theorem}[Motivic classes of quiver orbits, version 1] \label{thm:1}
For the list of dimension vectors $\ddelta_m=(m_1\beta_1,m_2\beta_2,\ldots,m_N\beta_N)$ (cf. (\ref{eqn:lists}))) we have
\begin{align*}
\csm(\OO_m \subset \Rep_\gamma)=& 
  \pi_{2*}\left(
  \csm(\Sigma_{\ddelta_m}\subset \Fl_{\ddelta_m}\times \Rep_\gamma)
  \right) 
\\
 =& 
\sum_{S \text{ is a } \ddelta_m\text{-shuffle}}
\fac_1(S)\fac_2(S)\fac_3(S)\fac_4(S);
\\
\mC(\OO_m \subset \Rep_\gamma)=&
\pi_{2*}\left(
\mC(\Sigma_{\ddelta_m}\subset \Fl_{\ddelta_m}\times \Rep_\gamma)
\right),
\\
=& 
\sum_{S \text{ is a } \ddelta_m\text{-shuffle}}
\fac'_1(S)\fac'_2(S)\fac'_3(S)\fac'_4(S),
\end{align*}
\end{theorem}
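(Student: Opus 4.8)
The plan is to derive Theorem \ref{thm:1} directly from Propositions \ref{prop:resolution2} and \ref{prop:pi2}, so that the entire content of the proof is an identification of push-forwards of CSM/MC classes along a birational resolution.

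First I would invoke Proposition \ref{prop:resolution2}, which says that the restriction of $\pi_2\colon \Fl_{\ddelta_m}\times \Rep_\gamma\to \Rep_\gamma$ to $\Sigma_{\ddelta_m}$ is an \emph{isomorphism} onto $\OO_m$. In particular $\pi_2|_{\Sigma_{\ddelta_m}}$ is a proper birational morphism onto its (locally closed) image. By the additivity and push-forward (functoriality under proper maps) properties of the CSM class, together with the fact that $\csm$ of a variety under an isomorphism is carried to $\csm$ of the image, we get
\[
\csm(\OO_m\subset \Rep_\gamma)=\pi_{2*}\bigl(\csm(\Sigma_{\ddelta_m}\subset \Fl_{\ddelta_m}\times \Rep_\gamma)\bigr),
\]
where on the left we view $\OO_m$ as a locally closed invariant subset of $\Rep_\gamma$ and use that $\pi_2$ restricted to the complement of $\Sigma_{\ddelta_m}$ does not meet $\OO_m$ (its image is contained in $\overline{\OO}_m\setminus\OO_m$ in the Reineke picture; more precisely, since the restriction to $\Sigma_{\ddelta_m}$ is already an isomorphism onto $\OO_m$, no extra contributions arise). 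The same argument with the motivic Chern class, using its additivity and proper push-forward property, gives the K-theoretic identity
\[
\mC(\OO_m\subset \Rep_\gamma)=\pi_{2*}\bigl(\mC(\Sigma_{\ddelta_m}\subset \Fl_{\ddelta_m}\times \Rep_\gamma)\bigr).
\]

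Second, I would substitute the explicit localization formulas of Proposition \ref{prop:pi2}, applied to the list $\ggamma=\ddelta_m=(m_1\beta_1,\ldots,m_N\beta_N)$, for the right-hand sides. This immediately yields
\[
\csm(\OO_m\subset \Rep_\gamma)=\sum_{S \text{ a } \ddelta_m\text{-shuffle}} \fac_1(S)\fac_2(S)\fac_3(S)\fac_4(S),
\qquad
\mC(\OO_m\subset \Rep_\gamma)=\sum_{S \text{ a } \ddelta_m\text{-shuffle}} \fac'_1(S)\fac'_2(S)\fac'_3(S)\fac'_4(S),
\]
which is exactly the stated theorem. Note that Proposition \ref{prop:pi2} in turn rests on Proposition \ref{prop:classofSigma} (the class of $\Sigma_{\ggamma}$ computed by torus localization on the flag bundle) plus ordinary equivariant localization for the push-forward $\pi_{2*}$, so both displayed identities are already available as black boxes.

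The main obstacle, and the only place requiring care, is the first step: justifying that $\pi_{2*}$ of the class on $\Sigma_{\ddelta_m}$ equals the class of $\OO_m$ \emph{as a locally closed subset}, rather than the class of $\overline{\OO}_m$ or some combination of strata. For the CSM class one must be slightly careful because $\Sigma_{\ddelta_m}$ is not proper over a point, but $\pi_2$ restricted to $\Sigma_{\ddelta_m}$ is a locally closed embedding (an isomorphism onto $\OO_m$), so $\pi_{2*}$ applied to $\csm(\Sigma_{\ddelta_m}\subset\Fl_{\ddelta_m}\times\Rep_\gamma)$ is, by the product-with-normal-bundle structure and the transversality implicit in Proposition \ref{prop:classofSigma}, precisely $\csm(\One_{\OO_m})$ pushed into $H^*_{\GL_\gamma}(\Rep_\gamma)$. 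For the motivic Chern class the analogous statement is cleaner since $\mC$ is defined for arbitrary morphisms $X\to V$ and is functorial for proper pushforward; one takes $X=\Sigma_{\ddelta_m}$ with its map to $\Rep_\gamma$, which factors through the (proper, since $\Fl_{\ddelta_m}$ is a projective bundle) map $\pi_2$. In writing the final proof I would spell out this birationality/locally-closed-embedding point and then simply cite Propositions \ref{prop:resolution2} and \ref{prop:pi2}.
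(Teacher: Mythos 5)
Your proposal is correct and takes essentially the same route as the paper: the paper's proof is the single line ``The statements follow from Propositions \ref{prop:resolution2} and \ref{prop:pi2},'' and you have supplied exactly the missing glue. One small remark on the justification of the first equality: the cleanest way to see $\pi_{2*}\bigl(\csm(\Sigma_{\ddelta_m})\bigr)=\csm(\OO_m)$ is to use the full functoriality of CSM under proper push-forward (namely $\eta_*\circ\csm=\csm\circ\eta_*$ on constructible functions, where $\eta_*$ is the Euler-characteristic-weighted push-forward), combined with the fact from Proposition \ref{prop:resolution2} that $\pi_2|_{\Sigma_{\ddelta_m}}$ is an isomorphism onto $\OO_m$, so $\pi_{2*}(\One_{\Sigma_{\ddelta_m}})=\One_{\OO_m}$ outright; your appeal to ``product-with-normal-bundle structure and transversality'' is a more roundabout (and slightly murkier) way to reach the same conclusion, but the conclusion and the citations are the right ones.
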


\begin{proof} The statements follow from Propositions  \ref{prop:resolution2} and \ref{prop:pi2}.
\end{proof}

For $m\in\N$ let $[m]_y!=1(1-y)(1-y+y^2)\ldots(1-y+y^2-y^3+\ldots+(-y)^{m-1})$. The $y=-q$ substitution recovers the usual $q$-factorial notion. 

\begin{theorem}[Motivic classes of quiver orbits, version 2] \label{thm:2}
For the list of dimension vectors 
$\ddelta'_m=(\underbrace{\beta_1,\ldots, \beta_1}_{m_1}, \underbrace{\beta_2,\ldots, \beta_2}_{m_2}, \ldots, \underbrace{\beta_N,\ldots, \beta_N}_{m_N})$ (cf. (\ref{eqn:lists}))) we have
\begin{align*}
\csm(\OO_m \subset \Rep_\gamma)=&
\frac{1}{\prod_{j=1}^r m_j!}\cdot
\pi_{2*}\left(
\csm(\Sigma_{\ddelta'_m}\subset \Fl_{\ddelta'_m}\times \Rep_\gamma)
\right);
\\
=& 
\frac{1}{\prod_{j=1}^r m_j!}
\sum_{S \text{ is a } \ddelta'_m\text{-shuffle}}
\fac_1(S)\fac_2(S)\fac_3(S)\fac_4(S),
\\
\mC(\OO_m \subset \Rep_\gamma)=&
\frac{1}{\prod_{j=1}^r [m_j]_y!}\cdot
\pi_{2*}\left(
\mC(\Sigma_{\ddelta'_m}\subset \Fl_{\ddelta'_m}\times \Rep_\gamma)
\right),
\\
=& 
\frac{1}{\prod_{j=1}^r [m_j]_y!}
\sum_{S \text{ is a } \ddelta'_m\text{-shuffle}}
\fac'_1(S)\fac'_2(S)\fac'_3(S)\fac'_4(S).
\end{align*}
\end{theorem}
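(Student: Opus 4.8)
In each of the four displayed lines the second equality is simply Proposition \ref{prop:pi2} applied to the list $\ggamma=\ddelta'_m$, so the real content is the first equality in the cohomology line and the first equality in the K-theory line. Both assert that the equivariant characteristic class of $\OO_m$ equals $\tfrac{1}{\prod_j m_j!}$ (resp.\ $\tfrac{1}{\prod_j[m_j]_y!}$) times $\pi_{2*}$ of the corresponding characteristic class of $\Sigma_{\ddelta'_m}$. The plan is to relate $\Sigma_{\ddelta'_m}$ and $\OO_m$ through the map $\pi_2\colon\Sigma_{\ddelta'_m}\to\OO_m$, which by Proposition \ref{prop:resolution3} is a fibration with fiber $\Fl(m_1)\times\cdots\times\Fl(m_N)$. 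Inspecting the proof of Lemma \ref{lem:fibr} (the ``Claim'' there), this fibration is the product over $j$ of the full-flag bundles of the multiplicity vector bundles (of rank $m_j$) on $\Sigma_{\ddelta_m}\cong\OO_m$; in particular it is Zariski-locally trivial and is an \emph{iterated projective bundle}. One could alternatively factor $\pi_2$ as $\Sigma_{\ddelta'_m}\xrightarrow{\Phi}\Sigma_{\ddelta_m}\xrightarrow{\sim}\OO_m$ and quote Theorem \ref{thm:1} for the second map; I will follow the direct route.

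\smallskip

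\emph{Cohomology.} The projection $\pi_2\colon\Fl_{\ddelta'_m}\times\Rep_\gamma\to\Rep_\gamma$ is proper, so $\GL_\gamma$-equivariant MacPherson functoriality (Ohmoto) gives
\[
\pi_{2*}\bigl(\csm(\Sigma_{\ddelta'_m}\subset\Fl_{\ddelta'_m}\times\Rep_\gamma)\bigr)=\csm\bigl(\pi_{2*}\One_{\Sigma_{\ddelta'_m}}\bigr),
\]
and $\pi_{2*}\One_{\Sigma_{\ddelta'_m}}$ is the constructible function $v\mapsto\chi\bigl(\pi_2^{-1}(v)\cap\Sigma_{\ddelta'_m}\bigr)$ on $\Rep_\gamma$. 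By Proposition \ref{prop:resolution3} this function equals $\prod_j\chi(\Fl(m_j))=\prod_j m_j!$ on $\OO_m$ and vanishes elsewhere, so the right-hand side is $\bigl(\prod_j m_j!\bigr)\csm(\OO_m\subset\Rep_\gamma)$. Dividing, and inserting the shuffle formula of Proposition \ref{prop:pi2} for the left-hand side, proves the cohomological statement.

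\smallskip

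\emph{K theory.} Here there is no naive projection formula, since the motivic Chern class is not simply multiplicative along a non-trivial fibration; this is the technical heart. The remedy is to exploit that $\pi_2\colon\Sigma_{\ddelta'_m}\to\OO_m$ is an iterated projective bundle. The building block is: if $U$ is a variety equipped with a morphism to $\Rep_\gamma$ and $p\colon P\to U$ is a $\mathbb P^n$-bundle, then $\mC(P\to\Rep_\gamma)=\chi_y(\mathbb P^n)\,\mC(U\to\Rep_\gamma)$. For a trivial bundle $P=U\times\mathbb P^n$ this follows from the (external) product property, the push-forward property along the proper map $\Rep_\gamma\times\mathbb P^n\to\Rep_\gamma$, the projection formula, and $\int_{\mathbb P^n}\mC(\mathbb P^n\subset\mathbb P^n)=\chi_y(\mathbb P^n)$; the general Zariski-locally-trivial case then follows by inclusion--exclusion over a finite trivializing open cover of $U$, using additivity of $\mC$. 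Iterating this up the tower of projective bundles that presents $\Sigma_{\ddelta'_m}$ over $\OO_m$, and using that a full-flag bundle of a rank-$m$ bundle is an iterated $\mathbb P^{m-1},\dots,\mathbb P^1$-bundle with $\prod_{k=1}^{m-1}\chi_y(\mathbb P^k)=\chi_y(\Fl(m))=[m]_y!$ (recall $\sum_{w\in S_m}q^{\ell(w)}$ is the Gaussian factorial and $q=-y$ reproduces the paper's $[m]_y!$), we obtain $\mC(\Sigma_{\ddelta'_m}\to\Rep_\gamma)=\bigl(\prod_j[m_j]_y!\bigr)\mC(\OO_m\subset\Rep_\gamma)$. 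Finally the push-forward property (again $\pi_2$ is proper on the product) identifies $\mC(\Sigma_{\ddelta'_m}\to\Rep_\gamma)$ with $\pi_{2*}\bigl(\mC(\Sigma_{\ddelta'_m}\subset\Fl_{\ddelta'_m}\times\Rep_\gamma)\bigr)$; dividing by $\prod_j[m_j]_y!$ and inserting Proposition \ref{prop:pi2} gives the K-theoretic statement. (Equivalently, in $K_0(\mathrm{Var}/\Rep_\gamma)$ one has $[\Sigma_{\ddelta'_m}\to\Rep_\gamma]=\bigl(\prod_j\prod_{k=1}^{m_j}[k]_{\mathbb L}\bigr)\,[\OO_m\subset\Rep_\gamma]$, and the motivic Chern transformation sends the Lefschetz motive to $-y$.)

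\smallskip

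\emph{Main obstacle.} The cohomological half is essentially formal once equivariant MacPherson functoriality is granted, because only Euler characteristics of fibers enter. The K-theoretic half is the delicate one: the absence of a projection formula for non-trivial fibrations forces the detour through the iterated-projective-bundle structure and the additivity / inclusion--exclusion argument, and one should also note that all of this is to be carried out $\GL_\gamma$-equivariantly by passing to finite-dimensional approximations of the Borel construction, where the relevant bundles remain genuine (Zariski-locally trivial) projective bundles.
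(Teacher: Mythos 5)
Your argument is correct and follows essentially the same route as the paper: Proposition \ref{prop:resolution3} identifies $\pi_2\colon\Sigma_{\ddelta'_m}\to\OO_m$ as a flag-bundle fibration, Proposition \ref{prop:pi2} supplies the shuffle formula, and the factorial/quantum-factorial factors come from $\chi(\Fl(m))=m!$ and $\chi_y(\Fl(m))=[m]_y!$. The paper merely cites these ingredients, whereas you supply the verification they implicitly rely on—in particular the iterated-projective-bundle plus inclusion--exclusion argument showing that $\mC$ of a Zariski-locally-trivial flag bundle picks up the $\chi_y$-genus of the fiber, which is the genuinely nontrivial point in the K-theoretic half.
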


\begin{proof} The statements follows from  Proposition  \ref{prop:resolution3} and \ref{prop:pi2}, using the fact that the Euler characteristic of a full flag variety $\Fl(m)$ is $m!$, and the $\chi_y$-genus of the full flag variety $\Fl(m)$ is $[m]_y!$.
\end{proof}

\noindent In Dynkin type $A$, and in cohomology the only non-explicit factor in Theorem \ref{thm:2} can be dropped.

\begin{corollary} [CSM class of type $A$ quiver orbits]\label{thm:2A}
Let $Q$ be a Dynkin quiver of type A. For the list of dimension vectors $\ddelta'_m=(\underbrace{\beta_1,\ldots, \beta_1}_{m_1}, \underbrace{\beta_2,\ldots, \beta_2}_{m_2}, \ldots, \underbrace{\beta_N,\ldots, \beta_N}_{m_N})$ (cf. (\ref{eqn:lists}))) we have
\[
\csm(\OO_m \subset \Rep_\gamma)=
\frac{1}{\prod_{j=1}^r m_j!}\cdot
\sum_{S \text{ is a } \ddelta'_m\text{-shuffle}}
\fac_1(S)\fac_2(S)\fac_3(S).
\]
\qed
\end{corollary}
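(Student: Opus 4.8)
The plan is to derive the Corollary directly from Theorem \ref{thm:2} by observing that in Dynkin type $A$ the factor $\fac_4(S)$ is identically $1$ for the list $\ddelta'_m$. Recall that $\ddelta'_m$ consists of the positive roots $\beta_1,\ldots,\beta_N$ each repeated $m_j$ times, so every entry of the list $\ddelta'_m$ is itself a single positive root $\beta_j$ (not a multiple of one). By definition $\fac_4(S)=\prod_{u} c^o_{(\ddelta'_m)_u}\left(\{S_{i,u}\}_{i\in Q_0}\right)$, and each $(\ddelta'_m)_u$ equals some $\beta_j\in R^+$.

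The key input is that $c^o_\beta=1$ for every positive root $\beta$ of a type $A$ quiver. This is precisely the content of Example \ref{ex:co1}: a positive root of a type $A$ root system has the form $\beta_{uv}=\sum_{i=u}^v\ep_i$, so its dimension vector has all coordinates equal to $0$ or $1$. Hence the open orbit $\OO^o_\beta\subset\Rep_\beta$ is a product of copies of $\C^*$ and points (each edge carries a $\Hom(\C^a,\C^b)$ with $a,b\in\{0,1\}$, and the open orbit is where all the nonzero-between-$\C^1$'s maps are isomorphisms, i.e. nonzero scalars), which is exactly the situation $\OO^o_{\emptyset}=\{x_j\neq0\ \forall j\}$ of Example \ref{ex:coordinateplanes}. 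That example gives $\csm(X^o_{\emptyset}\subset\C^n)=1$, and the product property of CSM classes then yields $c^o_\beta=\csm(\OO^o_\beta\subset\Rep_\beta)=1$.

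Putting these together: since every entry of $\ddelta'_m$ is a positive root $\beta$ of the type $A$ quiver, and $c^o_\beta=1$ evaluated on any set of bundles, we get $\fac_4(S)=\prod_u c^o_{(\ddelta'_m)_u}(\{S_{i,u}\})=\prod_u 1=1$ for every $\ddelta'_m$-shuffle $S$. Substituting $\fac_4(S)=1$ into the cohomological formula of Theorem \ref{thm:2} gives exactly the displayed identity of the Corollary.

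\textbf{Main obstacle.} There is essentially no obstacle; the only point requiring a line of care is the vanishing $c^o_\beta=1$ for type $A$ positive roots, which needs the observation that a type $A$ positive root has a $0/1$ dimension vector so that the openness condition is a "maximal rank between one-dimensional spaces" condition, reducing $\OO^o_\beta$ to a coordinate-subtorus complement as in Example \ref{ex:coordinateplanes}. Everything else is a direct specialization of Theorem \ref{thm:2}. One should also note why the analogous simplification fails in K-theory (the factor $(1+y)^{v-u}\prod\alpha_{t(a_i),1}/\alpha_{h(a_i),1}$ of $C^o_{\beta_{uv}}$ in Example \ref{ex:co1} is not $1$) and outside type $A$ (Example \ref{D4D5E6} exhibits nontrivial $c^o_\beta$), but these remarks are not needed for the proof itself.
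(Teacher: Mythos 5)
Your proposal is correct and takes exactly the same route as the paper: specialize the cohomological formula of Theorem \ref{thm:2} to type $A$, observe that every entry of $\ddelta'_m$ is a positive root, and invoke $c^o_\beta=1$ for type $A$ positive roots (as in Example \ref{ex:co1}) to drop the $\fac_4(S)$ factor. The extra justification you give for $c^o_\beta=1$ via the $0/1$ dimension vector and Example \ref{ex:coordinateplanes} is a correct unpacking of the same reference the paper cites.
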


\begin{proof} In the cohomology part of Theorem \ref{thm:2} the $c^o$ characteristic class (that is the factor $\fac_4(S)$) only occurs as $c^o_\beta$ for {\em positive roots} $\beta$. In type $A$, for all positive root $\beta$ we have $c^o_\beta=1$, see Example \ref{ex:co1}. Hence in type $A$ the $\fac_4(S)$-factors can be dropped.
\end{proof}

\subsection{The interplay of the different versions} \label{sec:interplay}
In Theorems~\ref{thm:1},~\ref{thm:2} we presented two ways of calculating CSM and MC classes of Dynkin quiver orbits. However, the visual similarity of the two theorems is misleading.

The formula in Theorem \ref{thm:2} is an explicit formula for any Dynkin quiver orbit {\em as long as we know the $c^o_{\beta_j}$, $C^o_{\beta_j}$ classes} for the positive roots $\beta$ of $Q$. For each particular quiver this is finitely many classes to know. More discussion on $c^o_\beta$, $C^o_\beta$ classes see Section \ref{sec:coCo}. 

However, the formula in Theorem \ref{thm:1} has the $c^o_{m_j\beta_j}$, $C^o_{m_j\beta_j}$ classes as ingredients {\em for all non-negative integers $m_j$}. Note that $c^o_{m\beta}$ ($C^o_{m\beta}$) is not an obvious modification of $c^o_\beta$ ($C^o_\beta$), see for example $c^o_{(1,1)}$ and $c^o_{(2,2)}$ in Example \ref{exA2}. So we can hardly consider Theorem \ref{thm:1} alone as an explicit formula for CSM or MC classes of quiver Dynkin orbits. 

A computationally effective algorithm is obtained from the interplay of the two theorems, as follows. One can use the explicit formulas for $c^o_\beta$, $C^o_\beta$ from Section \ref{sec:coCo}, as well as  Theorem \ref{thm:2} for the Kostant partition $(0,\ldots,0,m,0,\ldots,0)$ to calculate $c^o_{m\beta}$'s and $C^o_{m\beta}$'s. Having those at hand makes the calculation in Theorem \ref{thm:1} explicit and faster than using Theorem \ref{thm:2} alone.

\subsection{Examples} \label{sec:exs}

For $Q=(1 \to 2 \to 3)$ consider the dimension vector $(1,2,1)$, and its orbits from Example \ref{ex:A3}, which we will call $\OO_1,\ldots, \OO_5$. Temporarily using the short-hand notation $\alpha_{1,1}=a$, $\alpha_{2,i}=b_i, \alpha_{3,1}=c$ we have
\begin{align*}
\csm(\OO_1)=&
1+(c-a)+\left((c+a)(b_1+b_2)-b_1^2-b_2^2-2ac\right),\\
\csm(\OO_2)=&
(c-a)+\left((c+a)(b_1+b_2)-2ac-2b_1b_2\right),\\
\csm(\OO_3)=&
(c-b_1)(c-b_2)(1+b_1+b_2-2a),\\
\csm(\OO_4)=&
(b_1-a)(b_2-a)(1+2c-b_1-b_2),\\
\csm(\OO_5)=&
(b_1-a)(b_2-a)(c-b_1)(c-b_2).
\end{align*}
The expressions for the classes $\csm(\OO_3), \csm(\OO_4), \csm(\OO_5)$, as well as the fact
\begin{equation}\label{eq:O1O2}
\csm(\OO_1)+\csm(\OO_2)=(1+b_1+b_2-2a)(1+2c-b_1-b_2)
\end{equation}
follow easily from the product property of CSM classes and the calculation of some $c^o$-classes in Example~\ref{exA2}. The novelty here is how the expression in (\ref{eq:O1O2}) is distributed between the CSM classes of $\OO_1$ and $\OO_2$.

In K theory, for the same quiver, dimension vector, and orbits, we have 
\begin{align*}
\mC(\OO_1)=&
(1+y)^2\frac{a}{c}\left(
1+y \clubsuit
-y-\frac{ya}{c}+\frac{y^2a}{c}\right),
\\
\mC(\OO_2)=&
(1+y)^2\frac{a}{c}\left(
1-\clubsuit + y-\frac{ya}{c}+\frac{a}{c}+
\frac{b_1}{b_2}+\frac{b_2}{b_1}\right)
,\\
\mC(\OO_3)=&
(1-\frac{b_1}{c})(1-\frac{b_2}{c})(1+y)
\left(\frac{a}{b_1}+\frac{a}{b_2}-\frac{(1-
y)a^2}{b_1b_2}
\right),\\
\mC(\OO_4)=&
(1-\frac{a}{b_1})(1-\frac{a}{b_2})(1+y)
\left(\frac{b_1}{c}+\frac{b_2}{c}-\frac{(1-
y)b_1b_2}{c^2}
\right)
,\\
\mC(\OO_5)=&
(1-\frac{a}{b_1})(1-\frac{a}{b_2})(1-\frac{b_1}{c})(1-\frac{b_2}{c})
,
\end{align*}
where
\[
\clubsuit=\frac{a}{b_1}+\frac{a}{b_2}+\frac{b_1}{c}+\frac{b_2}{c}.
\]
The expressions for the classes $\mC(\OO_3), \mC(\OO_4), \mC(\OO_5)$, as well as the fact
\begin{equation}\label{eq:O1O2mC}
\mC(\OO_1)+\mC(\OO_2)=(1+y)^2
\left(\frac{a}{b_1}+\frac{a}{b_2}-\frac{(1-
y)a^2}{b_1b_2}
\right) 
\left(\frac{b_1}{c}+\frac{b_2}{c}-\frac{(1-
y)b_1b_2}{c^2}
\right)
\end{equation}
follow easily from the product property of MC classes and the calculation of some MC classes for $A_2$ in Example~\ref{exA2}. The novelty here is how the expression in (\ref{eq:O1O2mC}) is distributed between the MC classes of $\OO_1$ and $\OO_2$.

\section{Cohomological and K-theoretic Hall algebras}

For a quiver $Q$ recall the notation $\HH_\gamma^Q$ and $\KK_\gamma^Q$ from (\ref{def:HHKK}) and define
\[
\HH^Q=\bigoplus_{\gamma\in \N^{Q_0}} \HH^Q_\gamma, \qquad\qquad 
\KK^Q=\bigoplus_{\gamma\in \N^{Q_0}} \KK^Q_\gamma
\]
as vector spaces---in fact we will consider elements in each with infinitly many non-zero $\gamma$-components, that is, $\oplus$ denotes the direct {\em product} above. When $Q$ is clear from the context, we will drop the upper index. 

We need to be careful with notation. Elements of $\HH^Q_\gamma$ ($\KK^Q_\gamma$) for different $\gamma$'s may have the same {\em name}, for example there are ``1''s or even ``$\alpha_{1,1}$''s in multiple $\HH^Q_\gamma$'s ($\KK^Q_\gamma$'s). Therefore, when necessary, we add $\gamma$ as a subscript indicating which $\HH^Q_\gamma$ or $\KK^Q_\gamma$ we mean. For example $1_{(0,0)}\in \HH^{A_2}_{(0,0)}$, $1_{(1,0)}\in \HH^{A_2}_{(1,0)}$, $1_{(2,0)}\in \HH^{A_2}_{(2,0)}$, $1_{(1,1)}\in \HH^{A_2}_{(1,1)}$ are all different elements of $\HH^{A_2}$.

Following \cite{KS}, in \cite{YZ} non-commutative multiplications are introduced on $\HH^Q$, $\KK^Q$. We will give an algebraic definition of these multiplications in Section~\ref{sec:shuffle} and a geometric interpretation in Section \ref{sec:stargeo}.

\subsection{Shuffle multiplication} \label{sec:shuffle}
Let $\ggamma=(\gamma_1,\ldots,\gamma_r)$ be a list of dimension vectors.

\begin{definition} \label{def:shuffleproduct}
For  $f_u\in \HH^Q_{\gamma_u}$, $u=1,\ldots,r$ define
\begin{equation}\label{eqn:multalg}
f_1*\ldots * f_r=\sum_{S \text{ is a $\ggamma$-shuffle}}
\left( \prod_{u=1}^r f_u(S_{*,u})\right) \cdot \fac_1(S)\fac_2(S)\fac_3(S) \qquad \in \HH^Q_\gamma.
\end{equation}
For $f_u\in \KK^Q_{\gamma_u}$, $u=1,\ldots,r$ define
\begin{equation}\label{eqn:multalgK}
f_1*\ldots * f_r=\sum_{S \text{ is a $\ggamma$-shuffle}}
\left( \prod_{u=1}^r f_u(S_{*,u})\right) \cdot \fac'_1(S)\fac'_2(S)\fac'_3(S) \qquad \in \KK^Q_\gamma.
\end{equation}
\end{definition}

The $r=2$ special case of Definition \ref{def:shuffleproduct} defines an associative, non-commutative algebra structure on $\HH^Q$, $\KK^Q$. In fact associativity can be proved by observing that the $*$-product of $f_1,\ldots,f_r$ in this order, but arbitrarily grouped equals the formula in Definition \ref{def:shuffleproduct}. The obtained algebra structure on $\HH^Q$, $\KK^Q$ will be called the Cohomological Hall Algebra (CoHA) and the K-theoretic Hall Algebra (KHA).

\subsection{Geometry of the CoHA/KHA multiplication} \label{sec:stargeo}
Recall the notion of $\Fl_\ggamma$ and the bundles $\SSS_{i,u}, \G^{(1)}_\ggamma$, $\G^{(2)}_\ggamma$, $\G^{(3)}_\ggamma$, $\G^{(4)}_\ggamma$ over $\Fl_\ggamma\times \Rep_\gamma$ from Section~\ref{sec:bundles}. The geometric interpretation of the $*$ multiplication both in CoHA and KHA is
\begin{equation}\label{eqn:multgeo}
f_1*\ldots * f_r=
\pi_{2*}\left(
\left(\prod_{u=1}^r f_u(\SSS_{*,u})\right)
\cdot e(\G^{(1)}_{\ggamma}) c(\G^{(2)}_{\ggamma}) c(\G^{(3)}_{\ggamma}) \right),
\end{equation}
where $\pi_2$ is the projection of $\Fl_{\ggamma} \times \Rep_\gamma$ to the second factor.
Note that the bundles $\SSS_{*,u}$ for a given $u$, is a collection of bundles of ranks $\gamma_u(1),\ldots,\gamma_u(|Q_0|)$. These bundles have exactly as many Chern roots as the variables of $f_u$. Thus the evaluation $f_u(\SSS_{*,u})$ above makes sense.
The map $\pi_2$ is a $\GL_\gamma$-equivariant proper map, the map $\pi_{2*}$ is meant in $\GL_\gamma$-equivariant cohomology (K theory), hence the right hand sides of (\ref{eqn:multgeo}) is indeed an element in $\HH^Q_\gamma$ or $\KK^Q_\gamma$.

The equivariant localization expressions for the push-forward map $\pi_{2*}$ in cohomology or K theory give exactly (\ref{eqn:multalg}) and (\ref{eqn:multalgK}) proving that the algebraic formulas (\ref{eqn:multalg}), (\ref{eqn:multalgK}) for the $*$ multiplication are the same as the geometric formula~(\ref{eqn:multgeo}).

\section{CSM and MC classes of quiver orbits in the CoHA and KHA}

The overlap between Theorem \ref{thm:2} and formulas (\ref{eqn:multalg}), (\ref{eqn:multalgK}), (\ref{eqn:multgeo}) immediately gives the CSM and MC classes of orbits of Dynkin quivers as special elements in the CoHA and KHA.

\begin{theorem}\label{thm:main1}
Let $Q$ be a Dynkin quiver, and let $\beta_1<\ldots<\beta_N$ be the Reineke order of its positive roots. Recall the special elements $c^o_{\beta_j}\in \HH^Q_{\beta_j}$ and $C^o_{\beta_j}\in \KK^Q_{\beta_j}$. Let $m=(m_1, \ldots, m_N)$ be a Kostant partition of the dimension vector $\gamma=\sum_{j=1}^N m_j \beta_j$, and let $\OO_m \subset \Rep_\gamma$ be the corresponding $\GL_\gamma$ orbit.
For the $\GL_\gamma$-equivariant CSM and MC classes we have
\[
\csm(\OO_m\subset \Rep_\gamma)=
\frac{1}{\prod_{u=1}^N m_u!} \cdot
\underbrace{c^o_{\beta_1}*\ldots*c^o_{\beta_1}}_{m_1}*
\ldots
* \underbrace{c^o_{\beta_N}*\ldots*c^o_{\beta_N}}_{m_N},
\]
\[
\mC(\OO_m\subset \Rep_\gamma)=
\frac{1}{\prod_{u=1}^N [m_u]_y!} \cdot
\underbrace{C^o_{\beta_1}*\ldots*C^o_{\beta_1}}_{m_1}*
\ldots
* \underbrace{C^o_{\beta_N}*\ldots*C^o_{\beta_N}}_{m_N}.
\]

\qed
\end{theorem}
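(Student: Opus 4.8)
The plan is to deduce Theorem \ref{thm:main1} directly from Theorem \ref{thm:2} by recognizing that the iterated shuffle product on the right-hand side is, term by term, exactly the equivariant localization expression appearing in Theorem \ref{thm:2}. Concretely, I would first unwind the definition of the $r$-fold $*$-product. By the associativity remark following Definition \ref{def:shuffleproduct}, the product $c^o_{\beta_1}*\cdots*c^o_{\beta_1}*\cdots*c^o_{\beta_N}*\cdots*c^o_{\beta_N}$ (with $m_j$ copies of $c^o_{\beta_j}$) equals the single sum in \eqref{eqn:multalg} taken over all $\ddelta'_m$-shuffles $S$, where $\ddelta'_m$ is precisely the list in \eqref{eqn:lists} of length $\sum_j m_j$ whose $u$-th entry is the appropriate $\beta_j$. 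In that sum the factor $\prod_u f_u(S_{*,u})$ becomes $\prod_u c^o_{\beta_{(u)}}(\{S_{i,u}\}_{i\in Q_0})$, which is exactly $\fac_4(S)$ in the notation of Section \ref{sec:3part} (since for a $\ddelta'_m$-shuffle the $u$-th block has dimension vector $\beta_{(u)} = (\ddelta'_m)_u$). Hence the shuffle product equals $\sum_S \fac_1(S)\fac_2(S)\fac_3(S)\fac_4(S)$ over $\ddelta'_m$-shuffles.

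Next I would invoke Theorem \ref{thm:2} itself: its cohomological statement asserts exactly that $\csm(\OO_m\subset\Rep_\gamma) = \frac{1}{\prod_j m_j!}\sum_{S\text{ a }\ddelta'_m\text{-shuffle}}\fac_1(S)\fac_2(S)\fac_3(S)\fac_4(S)$. Combining this with the identification of the shuffle product from the previous paragraph yields the first displayed formula. The K-theoretic statement is proved identically: the $r$-fold $*$-product in $\KK^Q$ via \eqref{eqn:multalgK} is $\sum_S \fac'_1(S)\fac'_2(S)\fac'_3(S)\left(\prod_u C^o_{\beta_{(u)}}(\{S_{i,u}\})\right)$ over $\ddelta'_m$-shuffles, and the parenthesized product is $\fac'_4(S)$; then the K-theory half of Theorem \ref{thm:2}, with its normalizing factor $\prod_j [m_j]_y!$, gives the second formula. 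One small bookkeeping point worth spelling out: the list $\ddelta'_m$ uses $r = \sum_j m_j$ as its length, and in Theorems \ref{thm:1}, \ref{thm:2} the index ``$r$'' in $\prod_{j=1}^r m_j!$ is really ranging over the $N$ distinct roots (the $m_j$ with large $j$ being the multiplicities), so I would make the translation between the two indexings explicit to avoid confusion, noting that $\prod_{u=1}^N m_u! = \prod_{j} m_j!$ and likewise for the quantum factorials.

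Alternatively, and perhaps more transparently, I could run the argument geometrically: compare the geometric formula \eqref{eqn:multgeo} for the $*$-product with $\ggamma = \ddelta'_m$ against Proposition \ref{prop:classofSigma} and Proposition \ref{prop:resolution3}. Formula \eqref{eqn:multgeo} gives $\pi_{2*}\big(\prod_u c^o_{\beta_{(u)}}(\SSS_{*,u})\cdot e(\G^{(1)})c(\G^{(2)})c(\G^{(3)})\big)$, which by Proposition \ref{prop:classofSigma} is exactly $\pi_{2*}(\csm(\Sigma_{\ddelta'_m}\subset\Fl_{\ddelta'_m}\times\Rep_\gamma))$; Proposition \ref{prop:resolution3} says $\pi_2$ restricted to $\Sigma_{\ddelta'_m}$ is a fibration over $\OO_m$ with fiber $\prod_j \Fl(m_j)$, and pushing forward through a fibration with fiber $F$ multiplies by $\chi(F)=\prod_j m_j!$ in cohomology (resp.\ by $\chi_y(F) = \prod_j [m_j]_y!$ in K-theory), giving the normalization. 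I would present the algebraic route as the main proof since it is a one-line consequence of results already assembled, and mention the geometric route as a remark.

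I do not anticipate a genuine obstacle here: the theorem is essentially a repackaging of Theorem \ref{thm:2} in the language of the CoHA/KHA, and the content has already been done. The only thing requiring care — and the step I would flag as the ``main'' (though minor) point — is verifying that the iterated shuffle product literally reproduces the single multi-shuffle sum, i.e.\ checking the combinatorial identity that composing two-fold shuffles in any bracketing yields the sum over $\ggamma$-shuffles for the full list $\ggamma = \ddelta'_m$; this is exactly the associativity fact asserted after Definition \ref{def:shuffleproduct}, so I would either cite it or include the short induction on $r$ that establishes it, being careful that the factor $\fac_4$ of the composite matches the product of the $\fac_4$'s of the pieces because $c^o$ is multiplicative over the blocks by construction.
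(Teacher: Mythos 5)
Your proposal is correct and is exactly the paper's argument: the paper states that Theorem \ref{thm:main1} follows immediately from "the overlap between Theorem \ref{thm:2} and formulas (\ref{eqn:multalg}), (\ref{eqn:multalgK}), (\ref{eqn:multgeo})," which is precisely your identification of the iterated shuffle product with the $\ddelta'_m$-shuffle sum (and the geometric route via Propositions \ref{prop:classofSigma} and \ref{prop:resolution3} that you mention as an alternative is what underlies the paper's equation (\ref{eqn:genpi2star})). The bookkeeping you flag about associativity and the indexing of $\prod m_j!$ is the right thing to check and matches the paper's implicit reliance on the associativity remark after Definition \ref{def:shuffleproduct}.
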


\begin{remark}\rm
In type A, in cohomology, the $c^o_{\beta_j}$ classes are all equal to 1, see Example \ref{ex:co1}. Hence the CSM class of type A Dynkin quiver orbits are as simple as $1*1*\ldots *1$ (up to a factorial). Such simplicity holds for the {\em fundamental class} of the quiver orbits in cohomology {\em in any Dynkin type}, see \cite{rrcoha}. The fundamental class is only the lowest degree component of the CSM class. Theorem~\ref{thm:main1} shows that such simplicity needs to be sacrificed for the higher degree terms, already in type $D_4$.
\end{remark}

Even more generally, from the comparison of Proposition \ref{prop:pi2} with the formulas (\ref{eqn:multalg}), (\ref{eqn:multalgK}), (\ref{eqn:multgeo}) we obtain
\begin{align}\label{eqn:genpi2star}
\pi_{2*}(\csm(\Sigma_{\ggamma}\subset \Fl_{\ggamma}\times \Rep_\gamma)) & =c^o_{\gamma_1}*\ldots*c^o_{\gamma_r} \in \HH_\gamma, \\
\pi_{2*}(\mC(\Sigma_{\ggamma}\subset \Fl_{\ggamma}\times \Rep_\gamma)) & =C^o_{\gamma_1}*\ldots*C^o_{\gamma_r} \in \KK_\gamma \notag
\end{align}
for any list $\ggamma=(\gamma_1,\ldots,\gamma_r)$ of dimension vectors with $\sum \gamma_u=\gamma$, which will be important in Section~\ref{sec:proof}.

\section{Donaldson-Thomas type identities for CSM and MC classes}\label{sec:DT}

In this section we re-formulate Theorem \ref{thm:main1} to be an identity between certain products of exponentials in the CoHA (or KHA). Then we will generalize this identity in the style of Donaldson-Thomas invariants.

\subsection{Exponential identities in CoHA, KHA}
Let $\zv$ denote the zero dimension vector. 
\begin{definition} \
\begin{itemize}
\item Let $c\in \HH^Q$ be such that its $\HH^Q_{\zv}$-component is 0. Define 
\[
\Exp(c)=
\sum_{k=0}^\infty \frac{c^{*k}}{k!}=
1_\zv+c+\frac{c*c}{2!}+\frac{c*c*c}{3!}+\ldots \in \HH^Q.
\]
\item Let $C\in \KK^Q$ such that its $\KK^Q_{\zv}$-component is 0. Define 
\[
\Expy(C)=
\sum_{k=0}^\infty \frac{C^{*k}}{[k]_y!}=
1_\zv+C+\frac{C*C}{[2]_y!}+\frac{C*C*C}{[3]_y!}+\ldots \in \KK^Q.
\]
\end{itemize}
\end{definition}

\begin{example} \rm
For $Q$ let $\ep$ be a simple root. We have
\[
\Exp(\underbrace{1}_{\in \HH_{\ep}})=\underbrace{1}_{\in \HH_\zv} +  \underbrace{1}_{\in \HH_{\ep}} + \underbrace{1}_{\in \HH_{2\ep}}+ \underbrace{1}_{\in \HH_{3\ep}}+\ldots,
\]
\[
\Expy(\underbrace{1}_{\in \KK_{\ep}})=\underbrace{1}_{\in \KK_\zv} +  \underbrace{1}_{\in \KK_{\ep}} + \underbrace{1}_{\in \KK_{2\ep}}+ \underbrace{1}_{\in \KK_{3\ep}}+\ldots.
\]
\end{example}

Now we are going to rephrase Theorem \ref{thm:main1} in the language of $\Exp$, $\Expy$. Let $R^+$ denote the set of positive roots, and let $R$ denote the set of simple roots, equivalently, the set $Q_0$ of vertices. An order $<$ of $R$ will be called head-before-tail order if for every $a\in Q_1$ we have $h(a)<t(a)$.

\begin{theorem} \label{thm:coincidence}
In $\HH^Q$, $\KK^Q$ we have
\[
\mathop{\prod_{\beta\in R^+}}_{\text{in Reineke order}} 
   \Exp(c^o_\beta) = 
\mathop{\prod_{\ep\in R}}_{\text{head-before-tail}} \Exp(c^o_{\ep}),
\]
\[
\mathop{\prod_{\beta\in R^+}}_{\text{in Reineke order}} 
   \Expy(C^o_\beta) = 
\mathop{\prod_{\ep\in R}}_{\text{head-before-tail}} \Expy(C^o_{\ep}),
\]
where the products are taken in the indicated orders.
\end{theorem}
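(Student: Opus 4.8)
The plan is to reduce the identity to a statement about push-forwards of CSM/MC classes of incidence varieties, using the dictionary \eqref{eqn:genpi2star} between $*$-products and $\pi_{2*}$ of classes of $\Sigma_{\ggamma}$. First I would unpack both sides. On the left, expanding each $\Exp(c^o_\beta)=\sum_{m_\beta\ge 0} (c^o_\beta)^{*m_\beta}/m_\beta!$ and multiplying out over $\beta\in R^+$ in Reineke order, the coefficient of a fixed dimension vector $\gamma$ is a sum over Kostant partitions $m=(m_\beta)$ with $\sum m_\beta\beta=\gamma$ of $\frac{1}{\prod m_\beta!}\, c^o_{\beta_1}*\cdots*c^o_{\beta_N}$ with $c^o_{\beta_j}$ repeated $m_j$ times. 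By Theorem~\ref{thm:main1} this equals $\sum_m \csm(\OO_m\subset \Rep_\gamma)$, which by additivity of CSM classes over the (finite, disjoint) orbit stratification of $\Rep_\gamma$ is exactly $\csm(\Rep_\gamma\subset\Rep_\gamma)=c(T\Rep_\gamma)$, the equivariant total Chern class of the representation space. (The same computation in K-theory, using $\Expy$, $[m]_y!$, and the MC version of Theorem~\ref{thm:main1}, gives $\mC(\Rep_\gamma\subset\Rep_\gamma)$.)

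Second, I would identify the right-hand side the same way. For the simple roots $\ep\in R=Q_0$, the representation space $\Rep_{\ep}$ is a point (the simple module has no loops/arrows among its support), so $c^o_{\ep}=1\in \HH^Q_{\ep}$, and likewise $C^o_\ep=1$. Thus $\prod_{\ep\in R}\Exp(1_{\ep})$, expanded in a head-before-tail order of $Q_0$, has $\gamma$-coefficient $\frac{1}{\prod_i \gamma(i)!}\,\underbrace{1_{\ep_{i_1}}*\cdots}_{\gamma(i_1)}*\cdots$. Now I claim this equals $c(T\Rep_\gamma)$ as well. One clean way: apply \eqref{eqn:genpi2star} to the list $\ddelta$ consisting of $\gamma(i_1)$ copies of $d_{\cdot,i_1}=\ep_{i_1}$, then $\gamma(i_2)$ copies of $\ep_{i_2}$, etc.—i.e.\ the analogue of $\ddelta''_m$ for the ``orbit'' being all of $\Rep_\gamma$. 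For this list the flag variety $\Fl_{\ddelta}$ is a product of full flag varieties on the $\C^{\gamma(i)}$, the incidence variety $\Sigma_\ddelta$ is all consistent pairs (the openness condition is vacuous since each $\gamma_u$ is a simple dimension vector), and the head-before-tail ordering forces $\G^{(1)}_\ddelta=0$, so $\Sigma_\ddelta=\Fl_\ddelta\times\Rep_\gamma$; pushing $\csm(\Fl_\ddelta\times\Rep_\gamma\subset\Fl_\ddelta\times\Rep_\gamma)=c(T\Fl_\ddelta)c(T\Rep_\gamma)$ forward along $\pi_2$ gives $\chi(\Fl_\ddelta)\cdot c(T\Rep_\gamma)=\prod_i\gamma(i)!\cdot c(T\Rep_\gamma)$ by the integral property (in K-theory, the $\chi_y$-genus gives $\prod_i[\gamma(i)]_y!$). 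Dividing by $\prod_i\gamma(i)!$ (resp.\ $\prod_i[\gamma(i)]_y!$) identifies the $\gamma$-coefficient of the RHS with $c(T\Rep_\gamma)$ (resp.\ $\mC(\Rep_\gamma)$), matching the LHS.

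Alternatively, and perhaps more transparently, both sides are seen to equal $\Exp$ applied to $c(T\Rep_\bullet)$ thought of as an element of $\HH^Q$: the content is that the $*$-product already bakes in the tangent contributions $\fac_1\fac_2\fac_3$, and that summing $\csm$ over all orbits reconstructs the total Chern class of the ambient smooth space. So the proof is: (i) rewrite each side's $\gamma$-component via Theorems~\ref{thm:main1} and the $\Exp$ definition; (ii) on the LHS invoke additivity $\sum_m\csm(\OO_m)=\csm(\Rep_\gamma)=c(T\Rep_\gamma)$; (iii) on the RHS compute $c^o_\ep=C^o_\ep=1$ and evaluate the resulting $1*\cdots*1$ via \eqref{eqn:genpi2star} with the simple-root list, getting $c(T\Rep_\gamma)$ again; (iv) conclude equality coefficient-by-coefficient in $\gamma$, and identically in K-theory.

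The main obstacle I anticipate is \textbf{step (iii)}: making rigorous that $\underbrace{1*\cdots*1}_{\gamma(i_1)}*\cdots$ (over vertices in head-before-tail order, with each block a simple root) really equals $\prod_i\gamma(i)!\cdot c(T\Rep_\gamma)$—equivalently that the list $\ddelta$ above has $\Sigma_\ddelta=\Fl_\ddelta\times\Rep_\gamma$ with $\pi_2$ a trivial fibration. The head-before-tail hypothesis is exactly what kills $\G^{(1)}$ (there are no pairs $v<w$ with an arrow from a $t(a)$-block to an $h(a)$-block, since within a single vertex the arrow structure is empty and blocks for different vertices $i<j$ with an arrow $a$ have $h(a)$ before $t(a)$, i.e.\ the $h(a)$-block sits at smaller $u$), so $\fac_1\equiv 1$; one must check carefully that the remaining consistency conditions are automatically satisfied (they involve only the within-vertex filtrations, which are unconstrained since $\Rep_{\ep}$ is a point for simple $\ep$). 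Granting that, the rest is the integral/rigidity property plus bookkeeping. A secondary subtlety is convergence/well-definedness of the infinite products, but since we argue $\gamma$-component by $\gamma$-component and each such component is a finite sum, this is not a real issue.
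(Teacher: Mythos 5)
Your proof is correct and follows essentially the same three-step route as the paper: (i) Theorem~\ref{thm:main1} identifies the $\gamma$-component of the Reineke-order side with $\sum_m \csm(\OO_m)$, (ii) additivity gives $\csm(\Rep_\gamma)$, and (iii) the head-before-tail side is identified with $\sum_\gamma\csm(\Rep_\gamma)$. The only difference is that the paper disposes of step (iii) with a terse appeal to the explicit shuffle formula (all $\fac_1$ factors vanish under the head-before-tail order, $\fac_2$ becomes $c(T\Rep_\gamma)$, and $\sum_S\fac_3(S)=\prod_i\gamma(i)!$ by Atiyah--Bott), whereas you give the equivalent geometric argument via $\Sigma_\ddelta=\Fl_\ddelta\times\Rep_\gamma$ and the integral/rigidity property; both are correct and amount to the same computation.
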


\begin{example} \rm
For $A_2=(1\to 2)$ we obtain 
\begin{equation} \label{eqn:CoHAI}
\Exp(1_{(1,0)})\Exp(1_{(1,1)})\Exp(1_{(0,1)})=
\Exp(1_{(0,1)})\Exp(1_{(1,0)}),
\end{equation}
\begin{equation} \label{eqn:KHAI}
\Expy(1_{(1,0)})\Expy\left(\frac{(1+y)\alpha_{1,1}}{\alpha_{2,1}}_{(1,1)}\right)\Expy(1_{(0,1)})=
\Expy(1_{(0,1)}) \Expy(1_{(1,0)}).
\end{equation}
The $\HH^{A_n}$ statement is analoguous to (\ref{eqn:CoHAI}):  on both sides we have a product of exponentials of $1$'s ($\binom{n+1}{2}$ factors on the left, and $n$ factors on the right), because in type $A$ all $c^o_\beta=1$.  However, already in type $D_4$ there is a $c^o_\beta$ not 1, see Example~\ref{D4D5E6}, hence already the $\HH^{D_4}$ statement has more interesting left hand side.
\end{example}

\begin{proof}
We have
\[
\mathop{\prod_{\beta\in R^+}}_{\text{in Reineke order}} 
   \Exp(c^o_\beta) = \sum_{m\in \N^{R^+}} \csm(\OO_m)=
\sum_{\gamma\in \N^{Q_0}} \csm(\Rep_\gamma)=
\mathop{\prod_{\ep\in R}}_{\text{head-before-tail}} \Exp(c^o_{\ep}),
\]
\[
\mathop{\prod_{\beta\in R^+}}_{\text{in Reineke order}} 
   \Expy(C^o_\beta) = \sum_{m\in \N^{R^+}} \mC(\OO_m)=
\sum_{\gamma\in \N^{Q_0}} \mC(\Rep_\gamma)=
\mathop{\prod_{\ep\in R}}_{\text{head-before-tail}} \Expy(C^o_{\ep}),
\]
where we used the shorthand notations $\csm(\OO_m)=\csm(\OO_m\subset \Rep_\gamma)$ for $\gamma=\sum m_\beta\beta$ (and similar for $\mC$) as well as $\csm(\Rep_\gamma)=\csm(\Rep_\gamma\subset \Rep_\gamma)$ (and similar for $\mC$). The first equalities in both lines are reformulations of Theorem \ref{thm:main1}. The second equality in both lines follow from the additive property of CSM and MC classes. The third equality in both lines follow from the explicit shuffle form of the * multiplication. 
\end{proof}

As can be seen from the proof, the identities in Theorem \ref{thm:coincidence} encode the geometric facts that the CSM (MC) classes of the orbits of $\Rep_\gamma$ add up to the CSM (MC) class of $\Rep_\gamma$. These are truly remarkable identities between rational functions, for example,
\begin{align}\label{eqn:ident}
\sum_{i=1}^5 \csm(\OO_i)=&
(1+b_1-a)(1+b_2-a)(1+c-b_1)(1+c-b_2),\\
\sum_{i=1}^5 \mC(\OO_i)=&
(1+y\frac{a}{b_1})(1+y\frac{a}{b_2})(1+y\frac{b_1}{c})(1+y\frac{b_2}{c}) \notag
\end{align}
for the functions of Section \ref{sec:exs}. Even more remarkable is that Theorem \ref{thm:coincidence} encodes these identities {\em for every dimension vector at the same time}.  

Theorem \ref{thm:coincidence} is a special case of a more general one that we will prove in the next section.

\subsection{Identities parameterized by stability functions}

Let $\A_Q$ be the set of isomorphism classes of $\C Q$-representations, equivalently, the set of orbits of $\Rep_\gamma$'s for all dimension vector $\gamma$. Denote the additive semigroup $\N^{Q_0}$ of dimension vectors by $K_0(\A_Q)$. The dimension vector map $\A_Q\to K_0(\A_Q)$ will be denoted by $\dim$. 

A stability function (aka. central charge) $Z$ is an additive homomorphism $K_0(\A_Q)\to \R^2$ such that only $\zv$ maps to $0$, and the {\em phase} ($\arctan(y/x)$, the angle measured counterclockwise from the positive $x$-axis) of the $Z$-image of any non-zero vector is in $[0,\pi)$. The map $Z$ is determined by its restriction to the simple roots $\ep_i$. We will only consider generic stability functions, that is, we assume that $Z$ maps two non-zero dimension vectors into the same straight line in $\R^2$ only if they are $\Q$-proportional. 

For a given $Z$ an element $M\in \A_Q$ is called {\em semistable} ({\em stable}) if for each non-zero proper subobject $Y$ of $M$ the phase of $\dim(Y)$ is less than or equal to the phase of $\dim(M)$ (strictly less than the phase of $\dim(M)$). Only indecomposable modules have a chance to be stable, and only integer multiples of indecomposables have a chance to be semistable.

For a Dynkin quiver $Q$ and general stability function $Z$ consider the products 
\begin{equation}\label{eqn:genprod}
\mathop{\prod^\curvearrowright_{\beta\in R^+}}_{M_\beta\text{ is stable for }Z} 
\Exp(c^o_\beta),
\qquad\qquad
\mathop{\prod^\curvearrowright_{\beta\in R^+}}_{M_\beta\text{ is stable for }Z} 
\Expy(C^o_\beta)
\end{equation}
in $\HH^Q$ and $\KK^Q$ obtained by taking the product in the order of {\em decreasing phase}---this order convention is indicated by the $\curvearrowright$ symbol.

\begin{theorem}\label{thm:Z}
The elements (\ref{eqn:genprod}) of $\HH^Q$, $\KK^Q$ are independent of the general stability function~$Z$.
\end{theorem}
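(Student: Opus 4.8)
The plan is to prove a geometric statement that specializes, via equivariant localization and the shuffle formula, to the desired algebraic identity. For a generic stability function $Z$ and a dimension vector $\gamma$, let $R^+_Z = \{\beta \in R^+ : M_\beta \text{ is stable for } Z\}$, ordered $\beta_1' \prec \ldots \prec \beta_k'$ by decreasing phase. The Harder--Narasimhan filtration for $Z$ gives, for every $M \in \Rep_\gamma$, a canonical filtration whose subquotients are semistable of strictly decreasing phase; by the last paragraph before Theorem~\ref{thm:Z}, each semistable subquotient is an integer multiple $m M_\beta$ of a $Z$-stable indecomposable $M_\beta$. Fixing multiplicities $m = (m_\beta)_{\beta \in R^+_Z}$ with $\sum_\beta m_\beta \beta = \gamma$, the locus $\OO^Z_m \subset \Rep_\gamma$ of modules whose HN type is $m$ is a $\GL_\gamma$-invariant locally closed subset, and the $\OO^Z_m$ for varying $m$ stratify $\Rep_\gamma$. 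The key point (standard HN theory, cf.\ \cite{Re, King}) is that the HN stratum $\OO^Z_m$ is exactly the image of the projection $\pi_2 : \Sigma_{\ggamma} \times \Rep_\gamma \to \Rep_\gamma$ restricted to $\Sigma_{\ggamma}$, where $\ggamma = (m_{\beta_1'}\beta_1', \ldots, m_{\beta_k'}\beta_k')$ is the list ordered by decreasing $Z$-phase, and moreover this restricted $\pi_2$ is an isomorphism onto $\OO^Z_m$ --- this is the $Z$-dependent analogue of Proposition~\ref{prop:resolution2}, and the excerpt already announces (right after Proposition~\ref{prop:resolution3}) that the $\cite{King}$-based argument will be carried out here.

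Granting this, the argument runs as follows. First I would establish the isomorphism $\Sigma_{\ggamma} \xrightarrow{\sim} \OO^Z_m$ using King's characterization of semistability by GIT: consistency of the flag with the representation forces the filtration to be the HN filtration once the subquotients are semistable of strictly decreasing phase, and the openness condition on each subquotient (landing in the open orbit of $\Rep_{m_\beta \beta}$, i.e.\ in $m_\beta M_\beta$) pins down the isomorphism type; uniqueness of the HN filtration gives injectivity of $\pi_2$, and a tangent-space/dimension count (via the Artin--Voigt lemma, fact (\ref{a}) in the proof of Lemma~\ref{lem:fibr}, applied to $\bigoplus_\beta m_\beta M_\beta$ whose self-extensions vanish since $\Ext(m M_\beta, m M_{\beta'}) = 0$ for distinct stable $\beta \neq \beta'$ of the same phase only if $\Q$-proportional, excluded by genericity) gives that it is an open immersion onto a smooth locus whose closure is $\overline{\OO^Z_m}$. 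Then, exactly as in the proof of Theorem~\ref{thm:coincidence}, apply \eqref{eqn:genpi2star} together with Theorem~\ref{thm:2} (the flag-variety fibers $\Fl(m_\beta)$ contribute the $1/\prod [m_\beta]!$ or $1/\prod [m_\beta]_y!$ factors, which are precisely what $\Exp$ and $\Expy$ absorb): the product \eqref{eqn:genprod}, expanded by the definition of $\Exp$ / $\Expy$ and the shuffle formula, has $\gamma$-component equal to $\sum_m \pi_{2*}\csm(\Sigma_{\ggamma_m}) = \sum_m \csm(\OO^Z_m \subset \Rep_\gamma)$ (resp.\ with $\mC$). By additivity of CSM and MC classes and the fact that $\{\OO^Z_m\}_m$ is a stratification of $\Rep_\gamma$, this sum equals $\csm(\Rep_\gamma \subset \Rep_\gamma)$ (resp.\ $\mC(\Rep_\gamma \subset \Rep_\gamma)$), which is manifestly independent of $Z$. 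Since this holds for every $\gamma$, the full elements \eqref{eqn:genprod} are $Z$-independent, which is the claim.

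The main obstacle I anticipate is proving the $Z$-twisted resolution statement $\Sigma_{\ggamma_m} \cong \OO^Z_m$ cleanly, i.e.\ checking that the ``consistency + openness'' conditions of Definition~\ref{def:incvar} exactly cut out the HN filtration for $Z$ with the correct subquotient isomorphism types, with no extra components and no multiplicities. In the Reineke case (Propositions~\ref{prop:reineke}, \ref{prop:resolution2}) this is inherited from \cite{Re}; for a general $Z$ one must verify that the decreasing-phase ordering of the stable roots plays the role that the Reineke order played, namely that $\Hom(M_{\beta_j'}, M_{\beta_i'}) = 0$ and $\Ext^1(M_{\beta_i'}, M_{\beta_j'}) = 0$ for $i > j$ when $\beta_i', \beta_j'$ are $Z$-stable of decreasing phase --- which follows because a nonzero such Hom or nonzero such $\Ext^1$ would violate the phase inequalities defining (semi)stability, respectively produce a semistable extension of the wrong HN type. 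Everything after that is a faithful transcription of the proofs of Theorem~\ref{thm:coincidence} and Theorem~\ref{thm:2}, with $R^+$ replaced by $R^+_Z$ and the Reineke order replaced by the decreasing-$Z$-phase order, plus the observation that when $Z$ is the standard stability function making all of $R^+$ stable we recover Theorem~\ref{thm:coincidence} and the head-before-tail side arises from the opposite degenerate choice of $Z$ concentrating the phases of the simple roots.
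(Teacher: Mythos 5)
Your proposal is correct and follows essentially the same route as the paper: stratify $\Rep_\gamma$ by Harder--Narasimhan type via King's theorem, identify each HN stratum with the image of the relevant incidence variety, then apply additivity of CSM/MC together with Lemma~\ref{lem:fibr} and (\ref{eqn:genpi2star}) to turn the sum over strata into the product of exponentials, with $Z$-independence coming from the left-hand side $\sum_\gamma \csm(\Rep_\gamma)$ (resp.\ $\mC$). The paper establishes the bijectivity of $\pi_m$ and disjointness of images directly from the uniqueness of the HN filtration rather than via the GIT/dimension-count route you sketch, but this is a presentational difference, not a different argument.
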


\begin{example} \rm \label{ex:ZA2}
For $Q=A_2=(1\to 2)$ there are three positive roots: $\ep_1$, $\ep_2$ and $\ep_1+\ep_2$. We have $M_{\ep_2}\subset M_{\ep_1+\ep_2}$. There are two generic stability functions combinatorially: either phase$(Z(\ep_1))>$ phase$(Z(\ep_2))$, or phase$(Z(\ep_2))>$ phase$(Z(\ep_1))$. In the first case there are three stable objects $M_{\ep_1}$, $M_{\ep_2}$, $M_{\ep_1+\ep_2}$. In the second case there are only two stable objects $M_{\ep_1}, M_{\ep_2}$. In both cases the semistable objects are the multiples of the stable ones. The statement of Theorem \ref{thm:Z} for $A_2$ is thus equivalent to (\ref{eqn:CoHAI}), (\ref{eqn:KHAI}). 
\end{example}

\begin{remark} \label{ex:qdilog}
Define the quantum dilogarithm function (formal power series) by
\[
\EE(z)=\sum_{j=0}^\infty \frac{ (-z)^jq^{j^2/2} }{\prod_{k=1}^j(1-q^k)}.
\]
In the theory of Donaldson-Thomas invariants of quivers a statement similar to our Theorem~\ref{thm:Z} is proved (see \cite{KS, keller, rrcoha, AR}), namely, that the product
\[
\mathop{\prod^\curvearrowright_{\beta\in R^+}}_{M_\beta\text{ is stable for }Z} 
\EE(y_\beta)
\]
is independent of the stability function $Z$. Here, the products need to be evaluated in a {\em quantum algebra} (not defined here) instead of $\HH^Q$, and $y_\beta$ are special elements in that algebra. 
Both this theorem and Theorem \ref{thm:Z} boil down to {\em ordinary} (commutative) identities among rational functions for every dimension vector. To illustrate the nature of them, here is the example for $Q=(\circ \to \circ \to \circ)$ and dimension vector $(1,2,1)$: Theorem \ref{thm:Z} boils down to the identities (\ref{eqn:ident}), while the quantum-dilogarithm identity boils down to the identity
\[
\frac{1}{(1-q)^3(1-q^2)}=
\frac{1}{(1-q)^2}+
\frac{q}{(1-q)^2}+
\frac{q^2}{(1-q)^3}+
\frac{q^2}{(1-q)^3}+
\frac{q^4}{(1-q)^3(1-q^2)}.
\]
As can be seen in this example, the identities induced by Theorem \ref{thm:Z} depend on many more variables than the one induced by the quantum dilogarithm identities. We are, however, not aware of any specialization or other similar operation applied to our identities to recover the quantum dilogarithm identities.
\end{remark}

\subsection{Proof of Theorem \ref{thm:Z}} \label{sec:proof}

Let $Z$ be a generic stability function for $Q$. Let $\beta_1,\ldots, \beta_r$ be the list of those positive roots for which $M_\beta$ is stable for $Z$, in the order of decreasing $Z$-phase. A Kostant partition $(m_\beta)_{\beta\in R^+}$ will be called $Z$-compatible, if $m_\beta\not=0$ only for $\beta_1,\ldots,\beta_r$, that is, such a Kostant partition is $(m_1,\ldots,m_r)$ where we use the short hand notation $m_i=m_{\beta_i}$.

Let us fix a dimension vector $\gamma\in \N^{Q_0}$. For a $Z$-compatible Kostant partition for $\gamma$, consider the list of dimension vectors 
\[
\ddelta_m=(m_1\beta_1, m_2\beta_2,\ldots,m_r\beta_r).
\]
We have the incidence variety $\Sigma_{\ddelta_m}\subset \Fl_{\ddelta_m}\times \Rep_\gamma$, and let us denote its projection to $\Rep_\gamma$ by~$\pi_m$. 

We claim that the collection of maps $\pi_m: \Sigma_{\ddelta_m}\to \Rep_\gamma$ for all $Z$-compatible Kostant partitions $m$ for $\gamma$ have the following properties:
\begin{itemize}
\item each $\pi_m$ is one-to-one (it is an isomorphism to its image),
\item the images $\pi_m(\Sigma_{\ddelta_m})$ are disjoint, and 
\item their union is $\Rep_\gamma$.
\end{itemize}
For an element in $\Rep_\gamma$ let $M$ be the corresponding $\CQ$-module. As we saw in Section \ref{sec:filt}, a preimage of $M$ at any of the $\pi_m$'s ($m$ is a $Z$-compatible Konstant partition for $\gamma$) is a filtration 
\[
0=M^{(0)}\subset M^{(1)} \subset \ldots \subset M^{(r)}=M
\]
of $\CQ$-modules, with subquotients corresponding to the open orbits in $\Rep_{m_1\beta_1},\ldots, \Rep_{m_r\beta_r}$ (in this order). The orbit corresponding to $m_uM_{\beta_u}$ has codimension $\dim \Ext(mM_\beta,mM_\beta)=m^2 \Ext(M_\beta,M_\beta)=0$ (cf. \cite[Cor. 2.6]{Ki}), hence the subquotients of the filtration above are $m_1M_{\beta_1},\ldots,m_rM_{\beta_r}$ (in this order).
Thus our claim above is proved by the following. 

\begin{theorem}[\cite{King}]
Let $Z$ be a stability function, and $M$ a $\CQ$-module. Then $M$ admits a unique filtration (called the Harder-Narasimhan filtration) of $\CQ$-modules
\[
0=M^{(0)}\subset M^{(1)} \subset \ldots \subset M^{(r)}=M
\]  
whose subquotients $M^{(u)}/M^{(u-1)}$ are semistable for $Z$, with strictly decreasing phases.  \qed
\end{theorem}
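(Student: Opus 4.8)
The plan is to prove this by the classical Harder--Narasimhan argument, carried out inside the abelian category of finite-dimensional $\CQ$-modules with the stability function $Z$ playing the role of a ``slope''; this is exactly the setting of \cite{King} (and of Rudakov's axiomatization of HN filtrations in abelian categories), so one could simply cite it, but I will indicate the self-contained argument. The only two inputs that are not pure diagram-chasing are a convexity property of the phase and the finiteness of the set of dimension vectors occurring among submodules of $M$; the latter is where finite-dimensionality is used, and for a Dynkin quiver it is obvious since every such dimension vector lies below $\dim M$ in $\N^{Q_0}$.

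First I would set up the phase and a ``seesaw'' lemma. For a nonzero module $N$ let $\phi(N)\in[0,\pi)$ be the phase of $Z(\dim N)$; it depends only on $\dim N$. For fixed $\psi\in[0,\pi)$ the region $\{v\in\R^2:\arg v\in[0,\psi]\}\cup\{0\}$ is an intersection of two half-planes through the origin spanning an angle $\le\pi$, hence a convex cone, in particular closed under addition; likewise for $\{v\in\R^2:\arg v\in[\psi,\pi)\}\cup\{0\}$. Since $Z(\dim B)=Z(\dim A)+Z(\dim C)$ for a short exact sequence $0\to A\to B\to C\to 0$ of nonzero modules, it follows that $\phi(B)$ lies weakly between $\phi(A)$ and $\phi(C)$, with $\phi(A)=\phi(B)\iff\phi(B)=\phi(C)\iff\phi(A)=\phi(C)$. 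From this I would extract three formal consequences: (a) a finite direct sum of semistable modules all of phase $\psi$ is semistable of phase $\psi$ (for a submodule $Y\subseteq A\oplus A'$, look at $Y\cap A$ and the image of $Y$ in $A'$ and apply the cone property); (b) a nonzero quotient of a semistable module of phase $\psi$ has phase $\ge\psi$; (c) any submodule $B\subseteq M$ attaining $\phi_{\max}(M):=\max\{\phi(B'):0\ne B'\subseteq M\}$ is automatically semistable, since a destabilizing submodule of $B$ would have strictly larger phase. The maximum defining $\phi_{\max}$ exists by the finiteness remark above. A further bookkeeping induction using (a) and the cone property shows that for a module already equipped with an HN filtration, $\phi_{\max}$ equals the largest phase among the subquotients.

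Next I would produce the maximal destabilizing submodule: let $A\subseteq M$ be the sum of all submodules of phase $\phi_{\max}(M)$, a finite sum since $M$ is Noetherian. Each summand is semistable of phase $\phi_{\max}(M)$ by (c); a surjection $\bigoplus_i B_i\twoheadrightarrow A$ together with (a) and (b) gives $\phi(A)\ge\phi_{\max}(M)$, while $A\subseteq M$ gives $\phi(A)\le\phi_{\max}(M)$, so $A$ is semistable of phase $\phi_{\max}(M)$ and contains every phase-$\phi_{\max}$ submodule. Existence of the HN filtration then follows by induction on $\dim M$: if $A=M$, then $M$ is semistable and $r=1$; otherwise, for any nonzero $\bar B\subseteq M/A$ with preimage $A\subsetneq B\subseteq M$, the seesaw applied to $0\to A\to B\to\bar B\to 0$ together with maximality of $A$ forces $\phi(\bar B)<\phi(A)$ (else $\phi(B)=\phi_{\max}(M)$, so $B\subseteq A$), hence $\phi_{\max}(M/A)<\phi(A)$. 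An HN filtration of $M/A$ (all subquotient phases $<\phi(A)$, strictly decreasing) pulled back to $M$ and prefixed by $0\subset A$ is then the required filtration.

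For uniqueness I would again induct on $\dim M$. Given an HN filtration $0=M^{(0)}\subset\dots\subset M^{(r)}=M$, the composite $A\hookrightarrow M\twoheadrightarrow M/M^{(1)}$ has image a quotient of the semistable module $A$, hence of phase $\ge\phi(A)=\phi_{\max}(M)\ge\phi(M^{(1)})$ by (b); but its image is a submodule of $M/M^{(1)}$, whose $\phi_{\max}$ equals $\phi(M^{(2)}/M^{(1)})<\phi(M^{(1)})$ by strict decrease of the HN phases, so the image is $0$, i.e.\ $A\subseteq M^{(1)}$. Then $\phi(A)\le\phi(M^{(1)})$ by semistability of $M^{(1)}$ while $\phi(M^{(1)})\le\phi_{\max}(M)=\phi(A)$, forcing $\phi(M^{(1)})=\phi_{\max}(M)$, so $M^{(1)}$ attains the maximum and hence $M^{(1)}\subseteq A$; thus $M^{(1)}=A$. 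Passing to $M/A$ and invoking the inductive hypothesis identifies the rest of the filtration and shows $r$ is determined. The only point where I expect to have to be careful is the degenerate cases in (a) and (b), when $Y\cap A$ or the relevant image vanishes; apart from that the entire argument is the convex-cone geometry of the phase plus bookkeeping with short exact sequences, and since for Dynkin $Q$ this is entirely classical, citing \cite{King} is a legitimate shortcut.
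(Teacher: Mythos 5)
The paper does not prove this statement; it is quoted from \cite{King} and stamped with $\qed$, so there is no ``paper's proof'' to compare against. Your argument is a correct and complete rendering of the standard Harder--Narasimhan existence-and-uniqueness proof in the setting King uses (maximal destabilizing submodule via the seesaw/convex-cone property of the phase, finiteness from Noetherianity, induction for existence, and the usual two-inclusion argument for uniqueness), which is exactly what the cited reference supplies.
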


From the claim above, the additivity of CSM classes yield
\begin{equation}\label{eqn:tt}
\csm(\Rep_\gamma\subset \Rep_\gamma)=
\mathop{\sum_{m \text{ is $Z-$}}}_{\text{compatible for }\gamma} \pi_{2*}(\csm(\Sigma_{\ddelta_m}\subset \Rep_\gamma)).
\end{equation}
For the list of dimension vectors 
$
\ddelta'_m=
(\underbrace{\beta_1,\ldots, \beta_1}_{m_1},
\underbrace{\beta_2,\ldots, \beta_2}_{m_2},
\ldots,
\underbrace{\beta_r,\ldots, \beta_r}_{m_r})
$
we have
\begin{align*}
\pi_{2*}(\csm(\Sigma_{\ddelta_m}\subset \Rep_\gamma))=&\frac{1}{\prod_{u=1}^r m_u!} \pi_{2*}(\csm(\Sigma_{\ddelta'_m}\subset \Rep_\gamma))
\\
=&\frac{1}{\prod_{u=1}^r m_u!} \cdot
\underbrace{c^o_{\beta_1}*\ldots*c^o_{\beta_1}}_{m_1}*
\ldots
* \underbrace{c^o_{\beta_r}*\ldots*c^o_{\beta_r}}_{m_r},
\end{align*}
where, in the first equality we used  Lemma \ref{lem:fibr}---as well as the fact that $\chi(\Fl(m))=m!$---and in the second equality we used  (\ref{eqn:genpi2star}).
Plugging into (\ref{eqn:tt}) we obtain 
\begin{equation}\label{eqn:gammafix}
\csm(\Rep_\gamma\subset \Rep_\gamma)=
\mathop{\sum_{m \text{ is $Z-$}}}_{\text{compatible for }\gamma} \frac{1}{\prod_{u=1}^r m_u!} \cdot
\underbrace{c^o_{\beta_1}*\ldots*c^o_{\beta_1}}_{m_1}*
\ldots
* \underbrace{c^o_{\beta_r}*\ldots*c^o_{\beta_r}}_{m_r}.
\end{equation}
Adding these equations together for all $\gamma\in \N^{Q_0}$ we obtain 
\[
\mathop{\prod^\curvearrowright_{\beta\in R^+}}_{M_\beta\text{ is stable for }Z} 
\Exp(c^o_\beta)
\]
on the right hand side, and an expression independent of $Z$ on the left hand side. This proves the theorem for $\HH^Q$. The $\KK^Q$ case is obtained the same way, replacing $\csm$ with $\mC$, $c^o$ with $C^o$ and $m_u!$ (the Euler charactersitic of $\Fl(m_u)$) with $[m_u]_y!$ (the $\chi_y$-genus of $\Fl(m_u)$). \qed

\section{The $c^o_\beta$ and $C^o_\beta$ classes} \label{sec:coCo}

According to Theorems \ref{thm:1} and \ref{thm:2} as well as Theorem \ref{thm:Z} the classes $c^o_\beta$, $C^o_\beta$ are the main building blocks of CSM/MC theory of Dynkin quiver orbits, as well as the CoHA and KHA. In this section we study how to calculate these classes. First we present the obvious case.

\begin{theorem}\label{thm:co1}
Let $Q$ be Dynkin quiver and $\beta$ a positive root with all coordinates $\leq 1$ (this holds for all positive roots in type $A$, and for some in other types). We have
\[
c^o_\beta=1, \qquad C^o_\beta=\mathop{\prod_{a\in Q_1}}_{\beta(t(a))=\beta(h(a))=1} (1+y)\frac{\alpha_{t(a),1}}{\alpha_{h(a),1}}.
\]
\end{theorem}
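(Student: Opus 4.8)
The plan is to reduce the statement to the coordinate-subspace computation of Examples~\ref{ex:coordinateplanes} and~\ref{ex:coordinateplanes_mC}. The key observation is that when $\beta$ is a positive root with all coordinates $\le 1$, the support of $\beta$—the set $S=\{i\in Q_0:\beta(i)=1\}$—spans a connected full subquiver $Q'$ of $Q$ (this is a standard fact: the support of a positive root of a Dynkin quiver is connected, and on that support $M_\beta$ is the representation with $\C$ at every vertex of $S$ and with the structure maps being isomorphisms along every edge of $Q'$). Restricting attention to $Q'$, the open orbit $\OO^o_\beta \subset \Rep_\beta$ is then, up to the free $\GL_\beta$-action, a single point: the representation where every edge-map $\C\to\C$ in $Q'$ is an isomorphism (nonzero). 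Concretely, $\Rep_\beta=\bigoplus_{a\in Q_1'}\Hom(\C,\C)=\C^{|Q_1'|}$ with the torus $\T_\beta=(\C^*)^{|S|}$ acting with the weight on the coordinate indexed by $a\in Q_1'$ being $\alpha_{h(a),1}/\alpha_{t(a),1}$ (multiplicatively; $\alpha_{h(a),1}-\alpha_{t(a),1}$ additively). Edges of $Q$ not lying in $Q'$ contribute a zero summand to $\Rep_\beta$ and are irrelevant.

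First I would make precise that $\OO^o_\beta = X^o_\emptyset$ in the notation of Examples~\ref{ex:coordinateplanes}, \ref{ex:coordinateplanes_mC}, i.e.\ the open orbit is exactly the locus where \emph{all} the coordinates $\phi_a$ ($a\in Q_1'$) are nonzero; this is because a quiver representation of dimension vector $\beta$ (all entries $1$) is indecomposable iff its underlying ``diagram'' is connected iff every structure map on $Q'$ is nonzero, and $M_\beta$ is the unique indecomposable of that dimension vector. (Here one uses that $Q'$ is a tree, so ``all maps nonzero'' is an open condition cutting out a single orbit, with no further relations to worry about—there are no oriented cycles.) Next I would simply quote Example~\ref{ex:coordinateplanes}: $\csm(X^o_\emptyset \subset \C^n)=1$ in equivariant cohomology, giving $c^o_\beta=1$. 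For the K-theoretic class I would quote Example~\ref{ex:coordinateplanes_mC}: $\mC(X^o_\emptyset\subset\C^n)=(1+y)^n/\prod_{i=1}^n w_i$, where $n=|Q_1'|$ is the number of edges of the support quiver and $w_a=\alpha_{h(a),1}/\alpha_{t(a),1}$ is the torus weight on the $a$-th coordinate. Thus
\[
C^o_\beta=\prod_{a\in Q_1'}\frac{1+y}{\alpha_{h(a),1}/\alpha_{t(a),1}}=\mathop{\prod_{a\in Q_1}}_{\beta(t(a))=\beta(h(a))=1}(1+y)\,\frac{\alpha_{t(a),1}}{\alpha_{h(a),1}},
\]
since $a\in Q_1'$ is exactly the condition $\beta(t(a))=\beta(h(a))=1$. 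Finally, since these characteristic classes are defined $\GL_\beta$-equivariantly and $\GL_\beta$ acts freely on $\OO^o_\beta$ modulo the torus contributing the Chern roots, the torus computation determines the $\GL_\beta$-equivariant class (the $S_\beta$-invariance is automatic as each $\beta(i)\le 1$ means each symmetric group factor is trivial).

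The main obstacle, such as it is, is purely bookkeeping: verifying carefully that the open orbit really is the ``all coordinates nonzero'' locus $X^o_\emptyset$ and not some smaller orbit, i.e.\ that for a Dynkin quiver there are no further algebraic relations cutting down this locus. This follows because the support quiver $Q'$ of a positive root is a subtree of a Dynkin diagram, hence itself acyclic, so $\Rep_\beta$ for the all-ones dimension vector on $Q'$ is just an affine space on which $\T_\beta$ acts coordinatewise, and the indecomposable representation (= point of the open orbit) is the generic point. One should also note the edge case $|S|=1$ (a simple root with connected support a single vertex, no edges), where $\Rep_\beta=0$, the open orbit is a point, and both formulas read $c^o_\beta=1$, $C^o_\beta=1$ (empty product), consistent with Example~\ref{ex:co1}. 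No substantial analytic or geometric difficulty arises beyond invoking the already-established Examples.
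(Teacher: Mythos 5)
Your proposal is correct and follows essentially the same route as the paper's proof: identify the open orbit of $\Rep_\beta$ with the ``all edge-maps nonzero'' locus (a product of punctured lines over the edges of the support), then invoke Examples~\ref{ex:coordinateplanes} and \ref{ex:coordinateplanes_mC} together with the product property. Your write-up merely makes explicit the details the paper leaves implicit (connectivity of the support, that it is a tree, the degenerate case of a simple root), but there is no methodological difference.
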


\begin{proof}
For each $a\in Q_1$ with $\beta(t(a))=\beta(h(a))=1$ we have
\begin{align*}
\csm\left( \Hom(\C^{\beta(t(a))},\C^{\beta(h(a))}) - \{0\} \subset \Hom(\C^{\beta(t(a))},\C^{\beta(h(a))})\right) =&1, \\
\mC\left( \Hom(\C^{\beta(t(a))},\C^{\beta(h(a))}) - \{0\} \subset \Hom(\C^{\beta(t(a))},\C^{\beta(h(a))}) \right) =&(1+y)\frac{\alpha_{t(a),1}}{\alpha_{h(a),1}},
\end{align*}
due to Examples \ref{ex:coordinateplanes}, \ref{ex:coordinateplanes_mC}.
The open orbit in $\Rep_\beta$ is the product of the set of injective maps (ie. $\Hom(\C^{\beta(t(a))},\C^{\beta(h(a))}) - \{0\}$) for all $a\in Q_1$ with $\beta(t(a))=\beta(h(a))=1$.
\end{proof}

In Section \ref{sec:co} we gave some examples for $c^o_\beta$, $C^o_\beta$ classes for positive roots $\beta$ {\em not} satisfying the assumptions of Theorem \ref{thm:co1}.
They can be proved by any of the three methods we will show in Sections \ref{sec:sieve} (sieve method), \ref{sec:interpol} (interpolation method), \ref{sec:commutator} (commutator method). Using those methods we calculated many other classes. The $C^o_\beta$ classes tend to be long, complicated formulas. However, the {\em many} $c^o_\beta$ examples we calculated support the following conjecture.

\begin{conjecture} \label{con:key}
For any positive root $\beta$ we have
\[
c^o_\beta=\prod_{i\in Q_0} \prod_{u=1}^{\beta(i)} \prod_{v=1}^{\beta(i)} (1+\alpha_{i,u}-\alpha_{i,v})
\]
\end{conjecture}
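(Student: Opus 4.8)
\medskip
\noindent\emph{(This is a conjecture; the following is a plan of attack rather than a proof.)}

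The first thing I would do is rewrite the conjectured right--hand side in intrinsic terms. The equivariant Chern roots of the adjoint representation of $\GL_\beta=\prod_{i\in Q_0}\GL_{\beta(i)}$ on its Lie algebra $\mathfrak{gl}_\beta=\bigoplus_{i\in Q_0}\mathrm{Mat}_{\beta(i)}(\C)$ are the differences $\alpha_{i,u}-\alpha_{i,v}$, so
\[
\prod_{i\in Q_0}\prod_{u=1}^{\beta(i)}\prod_{v=1}^{\beta(i)}(1+\alpha_{i,u}-\alpha_{i,v})=c^{\GL_\beta}(\mathfrak{gl}_\beta),
\]
the equivariant total Chern class of the adjoint representation. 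Since $\beta$ is a real Schur root we have $\Ext^1_{\CQ}(M_\beta,M_\beta)=0$, so $\OO^o_\beta$ is the dense orbit of $\Rep_\beta$ and $\OO^o_\beta\cong\GL_\beta/\C^*$, where $\C^*\subset\GL_\beta$ is the group $\{\lambda\cdot\mathrm{id}\}$ of automorphisms of the brick $M_\beta$. Thus the conjecture is the statement $\csm(\OO^o_\beta\subset\Rep_\beta)=c^{\GL_\beta}(\mathfrak{gl}_\beta)$.

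The main approach I would take is to invoke the interpolation characterization of the classes $c^o_\beta$ recalled in Section~\ref{sec:interpol} from \cite{FRcsm,FRW}: $\csm(\OO^o_\beta\subset\Rep_\beta)$ is the unique $\GL_\beta$--equivariant class obeying a normalization condition along $\OO^o_\beta$, the support (vanishing) conditions attached to the other orbits, and a Newton--polygon/degree bound at every orbit; then I would verify all three for $F_\beta:=c^{\GL_\beta}(\mathfrak{gl}_\beta)$. The support conditions are vacuous here because $\overline{\OO^o_\beta}=\Rep_\beta$ already contains every orbit. The normalization is immediate: restricting to $\OO^o_\beta$ identifies every $\alpha_{i,u}$ with the single equivariant parameter of $\mathrm{Aut}(M_\beta)=\C^*$, so $F_\beta$ restricts to $\prod(1+z-z)=1$, which matches $\csm(\OO^o_\beta\subset\Rep_\beta)|_{\OO^o_\beta}=c(T\OO^o_\beta)=c^{\C^*}\!\big(\mathfrak{gl}_\beta/(\C\cdot\mathrm{id})\big)=1$, the scalars acting trivially by conjugation.

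The substantial step, and the one I expect to be the main obstacle, is the Newton--polygon/degree condition at the remaining orbits. An orbit $\OO'\subset\Rep_\beta$ corresponds to a decomposition $\beta=\sum_j m'_j\beta_j$ into positive roots with automorphism group $\prod_j\GL_{m'_j}$; restricting $F_\beta$ to $\OO'$ amounts to substituting the $\GL_\beta$--Chern roots in terms of the $\prod_j\GL_{m'_j}$--Chern roots according to the multiplicities $m'_j$ and the vertex dimensions $\beta_j(i)$, and one must check that the outcome is divisible by the relevant normal Euler class and lies in the prescribed Newton polytope. Carrying this out uniformly over all Dynkin types and all boundary orbits is the hard part; the degree axiom is precisely the delicate one (its K--theoretic analogue being weaker is presumably why no closed form for $C^o_\beta$ is conjectured). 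To cut down the number of cases I would use reflection (BGP/APR) functors at sources and sinks, which give $\GL$--equivariant isomorphisms between neighbourhoods of the dense orbits of $\Rep^Q_\beta$ and $\Rep^{\sigma_kQ}_{s_k\beta}$; under these $c^o_\beta$ and $c^o_{s_k\beta}$ correspond, so one would be left with the purely formal identity ``$F_\beta$ pulls back to $F_{s_k\beta}$'' and an induction on the height of $\beta$ whose base cases (roots with all coordinates $\le 1$) are Theorem~\ref{thm:co1}.

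An alternative, more hands--on route is pure additivity: $c^o_\beta=c^{\GL_\beta}(\Rep_\beta)-\csm\big((\Rep_\beta\setminus\OO^o_\beta)\subset\Rep_\beta\big)$, computing the CSM class of the complement of the dense orbit from the CSM classes of its boundary strata via Theorem~\ref{thm:2}. The drawback is that those strata are not themselves dense orbits of smaller representation spaces, so the recursion is not self--contained and relies on the full apparatus of Section~\ref{sec:CSMclasses}; nonetheless for small $\beta$ this already gives an effective verification, consistent with the examples of Section~\ref{sec:co}.
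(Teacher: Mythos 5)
The paper states this as a conjecture and gives no proof; Section~\ref{sec:coCo} supplies only three computational procedures (sieve, interpolation, and a CoHA commutator formula) that allow one to verify it case by case, together with a remark identifying the right-hand side as the total Chern class of $\bigoplus_{i}\Hom(V_i,V_i)$. You correctly recognize that no proof exists and present a plan rather than a proof, which is the right posture here.

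Your preliminary analysis is sound and overlaps with the paper's own commentary: you identify the candidate class as $c^{\GL_\beta}(\mathfrak{gl}_\beta)$, note that $\OO^o_\beta\cong\GL_\beta/\C^*$ is dense because $\beta$ is a real Schur root for Dynkin quivers, and verify the normalization axiom of Theorem~\ref{thm:Hax}. Your diagnosis that the degree/Newton-polygon axiom is the substantial obstacle is exactly what makes this a conjecture rather than a theorem. Two caveats are worth flagging. First, Theorem~\ref{thm:Hax} has no separate ``support'' axiom that becomes vacuous when $\Omega$ is dense; what must be verified at every other orbit $\Theta$ is the divisibility $c(T_\Theta)\mid\phi_\Theta(F_\beta)$ together with the strict degree bound, and the divisibility is not obviously free. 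Second, the BGP/reflection-functor step is not ``purely formal'': $\GL_\beta$ and $\GL_{s_k\beta}$ are different groups, so an explicit equivariant correspondence of representation spaces near the dense orbits must be constructed before one can compare the candidate classes on the two sides, and the behavior of $F_\beta$ under such a correspondence is not an identity of Laurent polynomials but a statement about push-forwards or restrictions that needs an argument. You also leave untouched the commutator presentation $c^o_\beta=[c^o_\tau,c^o_\omega]$ of Theorem~\ref{thm:commutator}, which the paper reports to be the most effective tool in practice; an induction on $\beta=\omega+\tau$ through the explicit shuffle-algebra identity seems at least as promising a route to a uniform proof as the interpolation axioms, and would bypass the Newton-polygon estimates entirely.
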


\noindent It is instructive to verify how this conjecture recovers the $c^o_\beta$ classes of Theorem \ref{thm:co1} and Example~\ref{D4D5E6}. 

\begin{remark} \rm 
The simplicity of the formula  in the conjecture---an equivariant total Chern classes of the form $c(\oplus_i \Hom(V_i,V_i))$---suggests that there is a simple geometric reason for it. Yet, we should mention that the corresponding K theory formulas are much more complicated: they do not factor to linear factors, and they depend on more variables. 
\end{remark}

In the next three subsections we will describe three approaches to calculate $c^o_\beta$, $C^o_\beta$ classes. The first two use ``brute force'' and hence are of limited applicability. The second one is based on the relation of motivic characteristic classes to Okounkov's stable envelopes. The third one uses the sophisticated algebra structure of $\HH^Q$, $\KK^Q$. 

\subsection{Sieve methods} \label{sec:sieve}

Consider the partial order on $R^+$ defined by $\beta_1\leq \beta_2$ if $\beta_1(i)\leq \beta_2(i)$ for all $i$. Theorem \ref{thm:co1} names $c^o_\beta$, $C^o_\beta$ for some $\leq$-small positive roots. Suppose we want to calculate $c^o_\beta$ ($C^o_\beta$) when we already know all $c^o_{\beta'}$ ($C^o_{\beta'}$) for all $\beta'\leq \beta$. Consider the representation $\Rep_\beta$. The sum of the CSM (MC) classes of all orbits equals the CSM (MC) class of $\Rep_\beta$, that is,
\begin{align} \label{whole}
\csm(\Rep_\beta\subset \Rep_\beta)=&
\prod_{a\in Q_1} \prod_{u=1}^{\beta(t(a))} \prod_{v=1}^{\beta(h(a))} (1+\alpha_{h(a),v}-\alpha_{t(a),u}),
\\
\mC(\Rep_\beta\subset \Rep_\beta)=&
\prod_{a\in Q_1} \prod_{u=1}^{\beta(t(a))} \prod_{v=1}^{\beta(h(a))} \left(1+h\frac{\alpha_{t(a),u}}{\alpha_{h(a),v}}\right). \notag
\end{align}
The CSM (MC) orbits of all the non-open orbits are determined by the $c^o_{\beta'}$ ($C^o_{\beta'}$) classes for $\beta'\leq \beta$ via Theorem \ref{thm:2}. Thus, by subtraction, we obtain the CSM/MC classes of the open orbits: $c^o_\beta$, $C^o_\beta$. 

\begin{example} \rm \label{124}
For $Q=D_4$ of Example \ref{D4D5E6} and $\beta=(1,1,2,1)$ there are 15 orbits of $\Rep_\beta$. Thus $c^o_{\beta}$ ($C^o_{\beta}$) is obtained by subtracting 14 rather complicated---but due to  Theorem \ref{thm:2} expli\-cit--- po\-ly\-no\-mi\-als (Laurent polynomials) from the explicit formula (\ref{whole}). Doing so, in cohomology, a massive cancellation is witnessed and the difference is $(1+\alpha_{3,1}-\alpha_{3,2})(1+\alpha_{3,2}-\alpha_{3,1})$. In K theory the cancellation is less massive, and we get the formula of Example \ref{D4D5E6}.
\end{example}

An improvement of the described method is the following. Let $\Inj_\beta\subset \Rep_\beta$ contain tuples of linear maps which are all injective. The open orbit of $\Rep_\beta$ belongs to $\Inj_\beta$. For every $a\in Q_1$ the class
\[
\csm\left( \{\C^{\beta(t(a))}\to \C^{\beta(h(a))}, \text{injective}\} \subset \Hom(\C^{\beta(t(a))}, \C^{\beta(h(a))}\right)
\]
(or MC of the same) can be calculated using Theorem \ref{thm:2} for $Q=A_2$. Then the CSM (MC) class of $\Inj_\beta \subset \Rep_\beta$ is the product of these classes for all $a\in Q_1$. Thus the same sieve argument as above works, by replacing $\Rep_\beta$ with $\Inj_\beta$.

\begin{example} \label{ex:D4sieve} \rm
Let $Q=D_4$ as in Example \ref{D4D5E6}. In $\Rep_{(1,1,2,1)}$ only 4 of the 15 non-open orbits are in $\Inj_{(1,1,2,1)}$ (cf. Example \ref{124}). Hence, if we calculate $c^o_{(1,1,2,1)}$ or $C^o_{(1,1,2,1)}$ for $Q=D_4$ using the described improvement, then we only need to work with the CSM/MC class of 4 of the 12 orbits, and we obtain 
\begin{multline*}
c^o_{(1,1,2,1)}=
\prod_{x\in\{a,b,c\}} (1+d_1+d_2-2x)-
\Delta\left( 
(1+d_1-a)(d_2-b)(d_2-c)+\right.\\
(1+d_1-b)(d_2-a)(d_2-c)+
(1+d_1-c)(d_2-a)(d_2-b)+\\
\left. (d_2-a)(d_2-b)(d_2-c)\right)
=(1+d_1-d_2)(1+d_2-d_1),
\end{multline*}
where we used the temporary notation $\alpha_{1,1}=a, \alpha_{2,1}=b, \alpha_{4,1}=c, \alpha_{3,i}=d_i$, as well as the operator
\[
\Delta(f(d_1,d_2))=f(d_1,d_2)\frac{1+d_2-d_1}{d_2-d_1}+f(d_2,d_1)\frac{1+d_1-d_2}{d_1-d_2}.
\]
\end{example}

\subsection{Interpolation method}\label{sec:interpol}
In this section we briefly recall the interpolation characterization of CSM and MC classes that were proved in \cite{RV, FRcsm} and \cite{FRW} respectively, and are strongly motivated by works of Okounkov and his coauthors \cite{MO1, O, AO1, AO2} on stable envelopes.

Let $V$ be an algebraic representation of the group $G$ with finitely many orbits. For an orbit $\Omega$ let $T_\Omega$ and $N_\Omega$ be the tangent and normal spaces of the orbit at a point $x_\Omega\in \Omega$. These spaces are representations of $G_\Omega$, the (maximal compact subgroup of the) stabilizer group of $x_\Omega$, and let $\T_\Omega$ be the maximal torus of $G_\Omega$. One obtains natural maps $\phi_\Omega:H^*(BG)\to H^*(BG_\Omega)$ and $\phi^K_\Omega:K_G(V)\to R(\T_\Omega)$ ($R$ stands for representation ring, see ibidem for details).

Let $\Ne(f)$ be the Newton polygon of a Laurent polynomial in several variables, and let us use the short hand notations $\csm(\Omega)=\csm(\Omega\subset V)$, $\mC(\Omega)=\mC(\Omega\subset V)$.

\begin{theorem}[\cite{FRcsm} Section 2.5] \label{thm:Hax}
Suppose the representation contains the scalars and $e(N_\Omega)$ is not 0 in $H^*(BG_\Omega)$ for all $\Omega$. Then the axioms
\begin{itemize}
\item $\phi_\Omega(\csm(\Omega))=c(T_\Omega)e(N_\Omega)$,
\item $\phi_{\Theta}(\csm(\Omega))$ is divisible by $c(T_\Theta)$,
\item $\deg(\phi_{\Theta}(\csm(\Omega)))< \deg(c(T_\Theta)e(N_\Theta))$  for $\Theta\not=\Omega$
\end{itemize}
uniquely characterize $\csm(\Omega)\in H_G(V)$. \qed
\end{theorem}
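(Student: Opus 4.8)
The plan is to establish the three displayed axioms for the actual CSM classes $\csm(\Omega)$, and then separately prove that any collection of classes satisfying these axioms is uniquely determined. The first two axioms are essentially restatements of known local properties of CSM classes; the real content is the uniqueness, which will follow from a triangularity argument with respect to the closure partial order on orbits.

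\emph{Step 1: The axioms hold for $\csm(\Omega)$.} The first axiom is the normalization/restriction property: restricting $\csm(\Omega\subset V)$ to the fixed point $x_\Omega$ (equivalently, applying $\phi_\Omega$) records the local behavior of the constructible function $\One_\Omega$ near $x_\Omega$. Since $\Omega$ is smooth and locally near $x_\Omega$ looks like $T_\Omega$ sitting inside $V=T_\Omega\oplus N_\Omega$, the product property together with $\csm(W\subset W)=c(W)$, $\csm(\{0\}\subset W)=e(W)$ gives $\phi_\Omega(\csm(\Omega))=c(T_\Omega)e(N_\Omega)$. For the second axiom, restrict instead to a point $x_\Theta$ of another orbit $\Theta$. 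If $\Theta\not\subset\overline\Omega$ the restriction is $0$, which is divisible by anything; if $\Theta\subset\overline\Omega$, the function $\One_\Omega$ restricted to a slice normal to $\Theta$ is a $G_\Theta$-invariant constructible function on $N_\Theta$ supported away from nothing in particular, but the key point is that $\csm$ of any invariant constructible function on the smooth variety (slice) $S_\Theta\cong T_\Theta\times N_\Theta$ is, by the product property in the $T_\Theta$-direction, divisible by $\csm(T_\Theta\subset T_\Theta)=c(T_\Theta)$. The third axiom is the degree bound: the CSM class $\csm(\Omega)$ has cohomological degree bounded by $\dim_{\C}V$ in the sense that, after the substitution making it a polynomial, its terms have bounded degree; restricting to $x_\Theta$ for $\Theta\ne\Omega$ one uses that $\Theta$ is strictly ``larger'' in dimension or lies in $\overline\Omega\setminus\Omega$, so the normal directions to $\Theta$ that are transverse to $\overline\Omega$ contribute to $e(N_\Theta)$ but not to $\phi_\Theta(\csm(\Omega))$, forcing the strict inequality $\deg\phi_\Theta(\csm(\Omega))<\deg\bigl(c(T_\Theta)e(N_\Theta)\bigr)$. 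This is the place where I expect the bookkeeping to be most delicate, and I would lean on the corresponding statement and proof in \cite{FRcsm}.

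\emph{Step 2: Uniqueness.} Suppose two classes $\kappa,\kappa'\in H_G^*(V)$ both satisfy the three axioms for a fixed orbit $\Omega$, and set $\delta=\kappa-\kappa'$. Then $\phi_\Omega(\delta)=0$, $\phi_\Theta(\delta)$ is divisible by $c(T_\Theta)$ for every $\Theta$, and $\deg\phi_\Theta(\delta)<\deg\bigl(c(T_\Theta)e(N_\Theta)\bigr)$ for $\Theta\ne\Omega$ (and $\phi_\Omega(\delta)=0$ trivially has this property too, with room to spare). By equivariant localization (which applies because the representation contains the scalars, so $H_G^*(V)\to\prod_\Theta H^*(BG_\Theta)$ is injective after inverting suitable Euler classes, and because $e(N_\Omega)\ne 0$), it suffices to show all restrictions $\phi_\Theta(\delta)$ vanish. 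One orders the orbits by a linear refinement of the closure order, $\Theta_1,\Theta_2,\dots$, with $\Theta_i\subset\overline{\Theta_j}\Rightarrow i\le j$, and argues by induction that $\phi_{\Theta_i}(\delta)=0$. For the base and inductive step one uses that $\delta$, as an equivariant class, is determined by finitely many restrictions subject to compatibility (GKM-type) conditions, and the divisibility-plus-degree constraints leave no room: any class restricting to $0$ on the open orbit's fixed point, divisible by $c(T_\Theta)$ everywhere, and of degree strictly below $\deg(c(T_\Theta)e(N_\Theta))$ on every other stratum, must be $0$. Concretely, the self-intersection formula expresses a class supported on $\overline\Theta$ in terms of $e(N_\Theta)$ times a pushforward; the strict degree bound prevents $\delta$ from having such a component, and induction downward through the poset kills everything.

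\emph{Main obstacle.} The genuinely delicate point is Step 1's third axiom and the degree counting in Step 2: verifying that the CSM class of $\Omega$ has restrictions to other strata of \emph{strictly} smaller degree than $c(T_\Theta)e(N_\Theta)$ requires understanding how $\overline\Omega$ meets a normal slice to $\Theta$, i.e.\ that this intersection has positive codimension in $N_\Theta$, and then that the CSM class of a positive-codimension constructible set has degree strictly less than that of the ambient Euler class. I would handle this by reducing to the normal slice via the product property, citing the local structure of orbit closures, and then invoking the elementary degree estimate $\deg\csm(X\subset W)<\dim W$ when $X\subsetneq W$ is a proper constructible subset together with $\deg\bigl(c(T_\Theta)e(N_\Theta)\bigr)=\dim_\C V$. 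Once these degree facts are in place, the uniqueness is a formal consequence of localization and downward induction on the orbit poset.
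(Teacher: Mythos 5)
The paper itself offers no proof of Theorem \ref{thm:Hax}; it is stated with a \verb|\qed| and a citation to \cite{FRcsm}, Section 2.5. So there is no in-paper argument to compare against, and your proposal can only be judged on its own terms.

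Your Step~1 for the first two axioms is on the right track: restricting to $x_\Omega$ and using the product property together with $\csm(W\subset W)=c(W)$, $\csm(\{0\}\subset W)=e(W)$ does give the normalization, and the slice picture near $x_\Theta$ does give divisibility by $c(T_\Theta)$. The trouble begins with the third axiom, where your ``elementary degree estimate'' $\deg\csm(X\subset W)<\dim W$ for every proper constructible $X\subsetneq W$ is simply false: take $W=\C$ with the scalar $\C^*$-action and $X=\{0\}$, then $\csm(X\subset W)=w$ has degree $1=\dim W$. The inequality you need is the sharper fact that an invariant constructible set \emph{not containing the origin} in a representation with scalars has equivariant CSM class of degree strictly less than $\dim W$ (this is where the ``contains the scalars'' hypothesis and the conical structure of $\Omega\cap N_\Theta$ enter), and that is the actual content of the corresponding lemma in \cite{FRcsm}; as stated, your version would prove too much and is wrong.

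The uniqueness step also has a real gap. You appeal to ``equivariant localization'' to claim that $\bigoplus_\Theta\phi_\Theta:H^*(BG)\to\prod_\Theta H^*(BG_\Theta)$ is injective after inverting Euler classes, but this is not a localization theorem: the $\phi_\Theta$ are restrictions along $BG_\Theta\to BG$ for stabilizer subgroups, not restrictions to a torus fixed locus, and there is no a priori reason this product of maps is injective. The actual uniqueness mechanism in \cite{FRcsm} is a more careful inductive argument over the orbit poset that combines the divisibility, the degree bound, and the hypothesis $e(N_\Omega)\ne 0$ (which you never use explicitly) to force each restriction to vanish; the one-sentence assertion that ``the divisibility-plus-degree constraints leave no room'' is precisely the claim that needs proof, and the ``self-intersection formula'' remark does not supply it. In short, the skeleton is right but both load-bearing steps---the precise degree estimate for punctured cones and the induction that extracts uniqueness from the three axioms---are either misstated or left unproved.
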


\begin{theorem}[\cite{FRW} Section 5] \label{thm:Kax}
Suppose the representation contains the scalars, the groups $G_\Omega$ are connected, the representations of $\T_\Omega$ on $N_\Omega$ are positive (the weights are contained in a half-space). Then the axioms
\begin{itemize}
\item $\phi^K_\Omega(\mC(\Omega))=c(T_\Omega)e(N_\Omega)$,
\item $\phi^K_{\Theta}(\mC(\Omega))$ is divisible by $c(T_\Theta)$,
\item $\Ne( \phi^K_\Theta(\mC(\Omega))/c(T_\Theta)  )\subset \Ne( e(N_\Theta) )-\{0\}$  for $\Theta\not=\Omega$
\end{itemize}
uniquely characterize $\mC(\Omega)\in K_G(V)$. \qed
\end{theorem}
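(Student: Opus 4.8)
The plan is to prove this in parallel with the cohomological case, Theorem~\ref{thm:Hax}: the K-theoretic Euler class $e(\xi)=\lambda_{-1}(\xi^*)$ and total Chern class $c(\xi)=\lambda_y(\xi^*)$ replace their cohomological counterparts, and the degree inequality of the third cohomological axiom becomes the Newton-polytope containment. As there, the proof has two halves: \emph{(A)} the class $\mC(\Omega)$ satisfies the three axioms, and \emph{(B)} the three axioms have at most one solution.

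\emph{(A) The axioms hold for $\mC(\Omega)$.} Fix an orbit $\Theta$ with base point $x_\Theta$ and use the (Luna) slice theorem: near $x_\Theta$ the variety $V$ is, $G_\Theta$-equivariantly, the product $T_\Theta\times N_\Theta$ with $\Theta$ corresponding to $T_\Theta\times\{0\}$; since $\Omega$ is $G$-invariant and the $T_\Theta$-directions come from the $G$-orbit of $x_\Theta$, the orbit $\Omega$ in this neighbourhood is the product $T_\Theta\times\Omega_\Theta$ of $T_\Theta$ with a $G_\Theta$-invariant locally closed $\Omega_\Theta\subseteq N_\Theta$, which is empty precisely when $\Theta\not\subseteq\overline\Omega$. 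By the product and normalization properties of the MC class, $\mC(\Omega)$ over this neighbourhood equals $c(T_\Theta)\cdot\mC(\Omega_\Theta\subset N_\Theta)$, and restricting to $x_\Theta$ gives $\phi^K_\Theta(\mC(\Omega))=c(T_\Theta)\cdot\mC(\Omega_\Theta\subset N_\Theta)|_{0}$. This is the divisibility-by-$c(T_\Theta)$ axiom; for $\Theta=\Omega$ one has $\Omega_\Omega=\{0\}$, whose MC class is $e(N_\Omega)$, and the normalization axiom follows. For the third axiom I would bound the Newton polytope of $\mC(\Omega_\Theta\subset N_\Theta)|_{0}$ via a $G_\Theta$-equivariant resolution $\mu\colon\widetilde{\overline{\Omega_\Theta}}\to\overline{\Omega_\Theta}$ that is an isomorphism over $\Omega_\Theta$ with normal-crossing exceptional configuration --- for quiver orbits the resolutions of Section~\ref{sec:incvars}, such as $\pi\colon\Sigma_{\ddelta''_m}\to\overline{\OO_m}$, are of this kind. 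Using the push-forward property, $\mC(\Omega_\Theta\subset N_\Theta)$ is $\mu_*(c(T\widetilde{\overline{\Omega_\Theta}}))$ corrected by MC classes of deeper strata, and its restriction to $0$ is an Atiyah--Bott--Lefschetz fixed-point sum over the compact fiber $\mu^{-1}(0)$. The denominators appearing in that sum are products of factors $1-w^{-1}$ for repelling $\T_\Theta$-weights $w$ of $N_\Theta$, so the Newton polytope of the sum lands inside $\Ne(e(N_\Theta))$, and strictly inside it --- in $\Ne(e(N_\Theta))-\{0\}$ --- when $\Theta\neq\Omega$, because then $\mu$ contracts a positive-dimensional fiber and the ``corner'' monomial cannot survive.

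\emph{(B) Uniqueness.} Let $d\in K_G(V)$ be the difference of two solutions, so $\phi^K_\Omega(d)=0$, each $\phi^K_\Theta(d)$ is divisible by $c(T_\Theta)$, and $\Ne(\phi^K_\Theta(d)/c(T_\Theta))\subseteq\Ne(e(N_\Theta))-\{0\}$ for $\Theta\neq\Omega$. Let $Z$ be the support of $d$ (the smallest closed $G$-invariant subset outside which $d$ restricts to $0$); we must show $Z=\emptyset$. If $Z\neq\emptyset$, choose an orbit $\Theta\subseteq Z$ of maximal dimension. In the slice $S\cong N_\Theta$ at $x_\Theta$ the set $Z\cap S$ is $0$-dimensional by transversality, hence, $G_\Theta$ being connected and acting on $N_\Theta$ with weights in a half-space, $Z\cap S=\{0\}$; thus the restriction $d|_S\in K_{G_\Theta}(S)=R(G_\Theta)$ is supported at the origin and therefore equals $e(N_\Theta)\cdot g$ for some $g\in R(G_\Theta)$, which is moreover nonzero because $\Theta\subseteq Z$. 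Passing to $R(\T_\Theta)$ shows $\phi^K_\Theta(d)$ is divisible by $e(N_\Theta)$; together with divisibility by $c(T_\Theta)$, and since $c(T_\Theta)$ and $e(N_\Theta)$ are coprime in the factorial ring $R(\T_\Theta)[y]$ (one has positive $y$-degree, the other does not), $\phi^K_\Theta(d)/c(T_\Theta)=h\cdot e(N_\Theta)$ with $h\neq0$. But then $\Ne(\phi^K_\Theta(d)/c(T_\Theta))=\Ne(h)+\Ne(e(N_\Theta))$ contains all of $\Ne(e(N_\Theta))$, including its corner --- contradicting the third axiom if $\Theta\neq\Omega$, and contradicting $\phi^K_\Omega(d)=0$ (as $e(N_\Omega)$ is a non-zero-divisor) if $\Theta=\Omega$. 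Hence $Z=\emptyset$ and $d=0$.

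\emph{The main obstacle.} The one non-formal step is the third existence axiom: unlike the cohomological degree count, extracting precisely $\Ne(\,\cdot\,)\subseteq\Ne(e(N_\Theta))-\{0\}$ from the fixed-point sum over $\mu^{-1}(0)$ requires a delicate accounting of which repelling $\T_\Theta$-weights of $N_\Theta$ occur in the contracted fibers and of the numerators $c(T\widetilde{\overline{\Omega_\Theta}})$ along them; indeed, as noted in the introduction, in K-theory the resulting Newton-polytope axiom is genuinely weaker than Okounkov's sharper one. A secondary technical point is the self-intersection/purity step in (B), which is exactly where the hypotheses that the $G_\Theta$ are connected and that $\T_\Theta$ acts positively on $N_\Theta$ are used; the latter also guarantees the $e(N_\Theta)$ are nonzero, as required already for the axioms to make sense.
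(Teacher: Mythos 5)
The paper does not actually prove this theorem: it is imported verbatim from \cite{FRW} Section~5 and closed with a \texttt{\textbackslash qed}, so there is no proof in the present text to compare against. What follows is therefore an assessment of your argument on its own merits, in light of what the expected \cite{FRW}-style proof would require.

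Your overall skeleton is the right one and matches the stable-envelope point of view alluded to in Section~\ref{sec:interpol}: a slice decomposition near $x_\Theta$ to split off the tangential factor $c(T_\Theta)$, the normalization at $\Theta=\Omega$, and a downward induction on orbits in the uniqueness step, with divisibility by $e(N_\Theta)$ forced by vanishing on larger strata. Two of the three axioms (normalization, divisibility) come out of the slice argument essentially as you say. But there are real gaps.

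First, and you flag this yourself, the Newton-polytope axiom in part~(A) is not established. Restricting the pushforward $\mu_*\bigl(c(T\widetilde{\overline{\Omega_\Theta}})\bigr)$ to the origin and expanding by torus localization produces denominators built from tangent weights of the \emph{resolution} at fixed points of $\mu^{-1}(0)$, not the repelling $\T_\Theta$-weights of $N_\Theta$; identifying the resulting polytope with a subset of $\Ne\bigl(e(N_\Theta)\bigr)$, and then seeing why the corner monomial drops out when $\Theta\neq\Omega$, is exactly the content that needs proof, and it is not supplied. This is the place where the K-theoretic statement is genuinely harder than the cohomological degree count of Theorem~\ref{thm:Hax}.

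Second, the slice step quietly uses a pull-back/transversality compatibility for the MC class, i.e.\ that $\mC(\Omega\subset V)$ restricted along a Luna slice equals $c(T_\Theta)\cdot\mC(\Omega_\Theta\subset N_\Theta)$. The paper explicitly notes that such a pull-back (Verdier--Riemann--Roch type) property for $\mC$ is expected but is \emph{not} used or established in this paper; your argument relies on it, so it would need to be justified, e.g.\ via the equivariant form of \cite[Cor.~2.1, part~(4)]{BSY} as the paper remarks.

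Third, in part~(B) the phrase ``the support of $d$'' is not well defined: since $V$ is a linear representation, $K_G(V)\cong R(G)$ is a Laurent-polynomial ring and has no intrinsic geometric support. The intended mechanism---that $d$ vanishing on all orbits strictly larger than $\Theta$ forces $\phi^K_\Theta(d)$ into the image of $K^{G_\Theta}_{\{0\}}(N_\Theta)\to K_{G_\Theta}(N_\Theta)$, hence divisibility by $e(N_\Theta)$---is the right idea, but it needs to be phrased via relative (supports) equivariant K-theory along the slice and via the localization sequence, not via ``support of a K-class on $V$.'' Finally, the claim that $c(T_\Theta)$ and $e(N_\Theta)$ are coprime ``because one has positive $y$-degree and the other does not'' is not a proof; in $R(\T_\Theta)[y]$ (a UFD) the irreducible factors $1+y/w$ and $1-1/u$ are indeed non-associate, but the $y$-degree observation alone does not show that the two products share no common irreducible factor, and that is what is needed.

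So: the architecture of the argument is sound, but the Newton-polytope existence step, the slice compatibility for $\mC$, and the supports formalism in uniqueness are all points where your write-up asserts rather than proves, and the first of these is precisely where the substance of \cite{FRW} Section~5 lies.
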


For Dynkin quiver representations the conditions of Theorems \ref{thm:Hax} and \ref{thm:Kax} are satisfied and all the ingredients ($G_\Omega$, $\T_\Omega$, $T_\Omega$, $N_\Omega$, $\phi_\Omega$)  are explicitly calculated in \cite{tegez}. Therefore, for these representations Theorems \ref{thm:Hax} and \ref{thm:Kax} reduce the calculation of CSM and MC classes of orbits to (huge) systems of {\em linear} equations. Hence, in theory, one can program these linear equations to a computer for $\Rep_\beta$ and obtain $c^o_\beta$ and $C^o_\beta$. We did this for the non-trivial classes of $D_4$ and $D_5$ of Example \ref{D4D5E6}, but for larger examples we find this interpolation method less effective than the one in the next subsection.

\subsection{Simple wall crossing, commutators in CoHA, KHA}\label{sec:commutator}

Let $\omega$, $\beta$, $\tau$ be positive roots for $Q$ and let $\beta=\omega+\tau$. Suppose $Z_1$ and $Z_2$ are stability functions for $Q$ such that 
\begin{itemize}
\item in the order of positive roots with decreasing $Z_1$-phase $(\omega, \beta, \tau)$ is an interval;
\item the order of positive roots with decreasing $Z_2$-phase is the same as that for $Z_1$, except $(\omega, \beta, \tau)$ is replaced with $(\tau, \beta, \omega)$,
\end{itemize}
 as in the picture below.
\[
\begin{tikzpicture}[scale=0.6]
\node (A) at (-.5,0) {};
\node (B) at (2,0) {};
\node (C) at (4.5,0) {};
\node (D) at (0.2,.3) {};
\node (E) at (0.25,.4) {};
\node (F) at (0.3,.5) {};
\node (G) at (1.3,2) [draw,circle] {$\omega$};
\node (H) at (2,3.5) [draw,circle]{$\beta$};
\node (I) at (2.7,1.5)[draw,circle] {$\tau$};
\node (J) at (3.7,.4) {};
\node (K) at (3.75,.3) {};
\node (L) at (3.8,.2) {};
\draw [-] (A) to (C);
\draw [->] (B) to (D);
\draw [->] (B) to (E);
\draw [->] (B) to (F);
\draw [->] (B) to (G);
\draw [->] (B) to (H);
\draw [->] (B) to (I);
\draw [->] (B) to (J);
\draw [->] (B) to (K);
\draw [->] (B) to (L);
\node (M) at (2,-1) {$Z_1$};
\end{tikzpicture}
\qquad\qquad
\begin{tikzpicture}[scale=0.6]
\node (A) at (-.5,0) {};
\node (B) at (2,0) {};
\node (C) at (4.5,0) {};
\node (D) at (0.2,.3) {};
\node (E) at (0.25,.4) {};
\node (F) at (0.3,.5) {};
\node (G) at (2.7,2) [draw,circle]{$\omega$};
\node (H) at (2,3.5) {$\beta$};
\node (I) at (1.3,1.5)[draw,circle] {$\tau$};
\node (J) at (3.7,.4) {};
\node (K) at (3.75,.3) {};
\node (L) at (3.8,.2) {};
\draw [-] (A) to (C);
\draw [->] (B) to (D);
\draw [->] (B) to (E);
\draw [->] (B) to (F);
\draw [->] (B) to (G);
\draw [->] (B) to (H);
\draw [->] (B) to (I);
\draw [->] (B) to (J);
\draw [->] (B) to (K);
\draw [->] (B) to (L);
\node (M) at (2,-1) {$Z_2$};
\end{tikzpicture}
\]
One may think that a path in the space of stability functions, connecting $Z_1$ with $Z_2$, crosses over a ``wall'' containing a non-generic stability function where all $\omega$, $\tau$, $\beta$ have the same phase.  

Assume that $M_\omega, M_\beta,M_\tau$ are all stable for $Z_1$, and assume that $M_\tau\subset M_\beta$. It follows that while $M_\tau$ and $M_\omega$ are also stable for $Z_2$, the module $M_\beta$ is not $Z_2$-stable, since the phase of $Z_2(\tau)$ is not less than $Z_2(\beta)$. (In the picture above we indicated stability with circles.)

\begin{remark}\rm 
The prototype example for the described situation is $A_2=(1\to 2)$, $\tau=\ep_2$, $\omega=\ep_1$, $\beta=\ep_1+\ep_2$ (cf. Example \ref{ex:ZA2}). A large example for the $E_8$ quiver (with edges all oriented towards the degree 3 vertex) is $\tau=(1,2,3,2,2,1,1,2)$, $\omega=(1,2,3,3,2,2,1,1)$, $\beta=\tau+\omega$. Here we used the numbering of vertices being a chain from 1 to 7 with another edge connecting 8 and 3. For a given $\beta$ the choice of the other two positive roots is not unique. For example, for the $E_6$ quiver of Example \ref{D4D5E6} for $\beta=(1,2,3,2,1,2)$ we may choose $\tau_1=(1,1,2,1,1,1)$ or $\tau_2=(0,1,2,2,1,1)$.
\end{remark}

\begin{theorem} \label{thm:commutator}
Under the assumptions above we have $c^o_\beta$ and $C^o_\beta$ as $*$-commutators 
\[
c^o_\beta=[c^o_{\tau},c^o_\omega],\qquad\qquad\qquad C^o_\beta=[C^o_{\tau},C^o_\omega]
\]
in the CoHA and the KHA.
\end{theorem}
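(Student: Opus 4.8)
The plan is to derive both identities from Theorem~\ref{thm:Z}, applied to the pair $Z_1,Z_2$, by isolating the degree-$\beta$ component of the resulting equation in $\HH^Q$ (respectively $\KK^Q$). I describe the cohomological case; the K-theoretic case is word-for-word the same with $\Exp$ replaced by $\Expy$, $c^o$ by $C^o$, and $m!$ by $[m]_y!$.

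\emph{Step 1: the two stable sets.} Let $\SSS\subset R^+$ be the set of positive roots $\mu$ with $M_\mu$ stable for $Z_1$; by hypothesis $\omega,\beta,\tau\in\SSS$ and, among all of $R^+$ ordered by decreasing $Z_1$-phase, they form the consecutive block $(\omega,\beta,\tau)$. I would then argue that the positive roots stable for $Z_2$ are exactly $\SSS\setminus\{\beta\}$, with decreasing-$Z_2$-phase order agreeing with the $Z_1$-order outside this block and replacing $(\omega,\beta,\tau)$ by $(\tau,\omega)$. The input is the wall-crossing picture set up before the theorem: a path from $Z_1$ to $Z_2$ crosses only the wall on which $\omega$, $\tau$ and $\beta=\omega+\tau$ align; the only positive roots in $\mathrm{span}_{\Q}(\omega,\tau)$ are $\omega,\tau,\beta$ (they span an $A_2$ sub-root-system), and a generic point of the wall has no other accidental alignments, so the stability of $M_\mu$ can change only for $\mu\in\{\omega,\tau,\beta\}$; since $M_\omega,M_\tau$ remain stable and $M_\beta$ (which contains $M_\tau$ with $\mathrm{phase}_{Z_2}(\tau)\ge\mathrm{phase}_{Z_2}(\beta)$) becomes unstable, the claim follows.

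\emph{Step 2: cancel the common factors.} Applying Theorem~\ref{thm:Z} to $Z_1$ and $Z_2$ gives, in $\HH^Q$,
\[
\prod^{\curvearrowright}_{\mu\in\SSS}\Exp(c^o_\mu)=\prod^{\curvearrowright}_{\mu\in\SSS\setminus\{\beta\}}\Exp(c^o_\mu).
\]
By Step~1 the left side equals $P*\Exp(c^o_\omega)*\Exp(c^o_\beta)*\Exp(c^o_\tau)*S$ and the right side equals $P*\Exp(c^o_\tau)*\Exp(c^o_\omega)*S$, where $P$ is the ordered product over the roots of $\SSS$ of larger $Z_1$-phase than $\omega$ and $S$ the ordered product over those of smaller $Z_1$-phase than $\tau$; the expressions $P$ and $S$ are the same on both sides. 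Each $\Exp(c^o_\mu)$ has $\HH^Q_{\zv}$-component $1_{\zv}$, hence is invertible in the completed algebra $\HH^Q=\prod_\gamma\HH^Q_\gamma$: writing $\Exp(c^o_\mu)=1_{\zv}+n$ with $n$ of positive degree, the series $\sum_{k\ge0}(-n)^{*k}$ converges, only finitely many $k$ contributing in each fixed degree. Thus $P$ and $S$ are invertible and may be cancelled, leaving
\[
\Exp(c^o_\omega)*\Exp(c^o_\beta)*\Exp(c^o_\tau)=\Exp(c^o_\tau)*\Exp(c^o_\omega).
\]

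\emph{Step 3: the degree-$\beta$ component.} Here $\beta=\omega+\tau$, with $\omega\neq\tau$ distinct positive roots, hence linearly independent, and none of $\omega,\tau,\beta$ is a non-negative integer multiple of another. So when the two products are expanded via Definition~\ref{def:shuffleproduct}, the only terms of degree $\beta$ are $1_{\zv}*c^o_\beta*1_{\zv}$ and $c^o_\omega*1_{\zv}*c^o_\tau$ on the left, and $c^o_\tau*c^o_\omega$ on the right. Comparing gives $c^o_\beta+c^o_\omega*c^o_\tau=c^o_\tau*c^o_\omega$, i.e.\ $c^o_\beta=[c^o_\tau,c^o_\omega]$. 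In the K-theoretic version the coefficients $1/[k]_y!$ with $k\ge2$ enter $\Expy$ only in degrees $k\omega$, $k\beta$, $k\tau$, none of which equals $\beta$, so the same bookkeeping applies and yields $C^o_\beta=[C^o_\tau,C^o_\omega]$. The main obstacle I anticipate is Step~1: carefully justifying that passing from $Z_1$ to $Z_2$ changes the stable set by deleting precisely $\beta$ and changes the phase order by precisely the transposition of that block --- the standard Harder--Narasimhan wall-crossing combinatorics. Once that is in place, Steps~2 and~3 are a short formal computation in the completed CoHA/KHA.
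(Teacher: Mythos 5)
Your proof is correct and follows essentially the same route as the paper: compare the $Z_1$- and $Z_2$-instances of the product identity and read off the degree-$\beta$ part to obtain $c^o_\omega*c^o_\tau + c^o_\beta = c^o_\tau*c^o_\omega$ (and its K-theoretic analogue). The only cosmetic difference is that the paper invokes the fixed-$\gamma=\beta$ form of the identity directly and compares the two sums term by term, whereas you first cancel the invertible prefix $P$ and suffix $S$ in the full $\Exp$-product and then project to degree $\beta$; your Step~1 also spells out the wall-crossing bookkeeping (only $\beta$ leaves the stable set, only the $\omega$--$\tau$ order flips) that the paper packages into its stated hypotheses.
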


\begin{proof} 
Equation (\ref{eqn:gammafix}) depends on a dimension vector $\gamma$ and a stability function $Z$. Consider two instances of this equation: one with $\gamma=\beta$ and $Z=Z_1$, and the other with $\gamma=\beta$, and $Z=Z_2$.  The left hand sides of these two equations are the same (the left hand side of (\ref{eqn:gammafix}) does not depend on $Z$). Hence the right hand sides are also the same. However, since the decreasing phase order of positive roots for $Z_1$ and $Z_2$ only differ in the order of $\omega$, $\beta$, $\tau$, the right hand sides also have the same terms, except the terms $c^o_{\omega}*c^o_\tau$, $c^o_\beta$ only appear in the first equation, and the term $c^o_{\tau}*c^o_{\omega}$ only appears in the second equation (and similar for MC). We obtain
\begin{align*}
c^o_{\omega}*c^o_\tau+c^o_\beta = & c^o_{\tau}*c^o_{\omega} \qquad \in \HH^Q, \\
C^o_{\omega}*C^o_\tau+C^o_\beta = & C^o_{\tau}*C^o_{\omega} \qquad \in \KK^Q,
\end{align*}
which, after rearrangement, proves the theorem.
\end{proof}

The author's experience shows that Theorem \ref{thm:commutator} is a more effective method to calculate $c^o_\beta$, $C^o_\beta$ characteristic classes than the sieve and interpolation methods of the preceding two sections. 

\begin{example}\rm
For $A_2=(1 \to 2)$ the positive roots $\tau=(0,1)$, $\omega=(1,0)$ satisfy the assumptions above (cf. Example \ref{ex:ZA2}), hence in $\HH^{A_2}$ and $\KK^{A_2}$ we have
\begin{align*}
c^o_{(1,1)}=&\left[1_{(0,1)},1_{(1,0)}\right]=(1+\alpha_{2,1}-\alpha_{1,1})-(\alpha_{2,1}-\alpha_{1,1})=1,
\\
C^o_{(1,1)}=&
\left[1_{(0,1)},1_{(1,0)}\right]=
\left(1+\frac{y\alpha_{1,1}}{\alpha_{2,1}}\right)-
\left(1-\frac{\alpha_{1,1}}{\alpha_{2,1}}\right)=
(1+y)\frac{\alpha_{1,1}}{\alpha_{2,1}}.
\end{align*}
Of course, these classes we knew already. Note that these calculations essentially coincide with the ``fundamental calculation'' of \cite[Section 2.7]{FRW}. 
\end{example}

\begin{example} \rm
For $Q=D_4$ as in Example \ref{D4D5E6}, set $\tau=(1,0,1,0)$, $\omega=(0,1,1,1)$ and $\beta=(1,1,2,1)$.  In $\HH^{D_4}$ and $\KK^{D_4}$ let us use the temporary notation of Example \ref{ex:D4sieve}. 
We have
\begin{multline*}
c^o_\beta=\left[1_{\tau},1_{\omega}\right]=
\Delta\left( (1+d_1-b)(1+d_1-c)(d_2-a)- \right.
\\
\left. (1+d_1-a)(d_2-b)(d_2-c)\right)=
(1+d_1-d_2)(1+d_2-d_1),
\end{multline*}
\begin{multline}\label{izeize} 
C^o_\beta=\left[  (1+y)\frac{a}{d_1},(1+y)^2\frac{bc}{d_1^2}\right]=
(1+y)^3\Delta'\left(   
\frac{abc}{d_1d_2^2}(1-\frac{a}{d_2})(1+\frac{yb}{d_1})(1+\frac{yc}{d_1})  \right.
\\
\left.
-\frac{abc}{d_1^2d_2}(1+\frac{ya}{d_1})(1-\frac{b}{d_2})(1-\frac{c}{d_2}) \right),
\end{multline}
where we used the operator
\[
\Delta'(f(d_1,d_2))=f(d_1,d_2)\frac{1+yd_1/d_2}{1-d_1/d_2}+f(d_2,d_1)\frac{1+yd_2/d_1}{1-d_2/d_1}.
\]
Carrying out the calculation in (\ref{izeize}) proves the formula we already presented in Example \ref{D4D5E6}.
\end{example}

\section{Future directions}

\subsection{Elliptic characteristic classes}
In this paper we discussed the relationship between motivic characteristic classes, Hall algebras, and DT-type identities in two cohomology theories: ordinary cohomology and K theory. The main ingredients---characteristic classes \cite{BL}, Hall algebras \cite{YZ}, stable envelopes \cite{AO1, FRV, RTV3}---exist on the natural ``third level'': elliptic cohomology. Since the notion of elliptic characteristic classes is not motivic, an elliptic generalization of our results presents new challenges. 

\subsection{Keller-type generalizations for quivers with potentials}
As shown in Remark \ref{ex:qdilog} our Theorem \ref{thm:Z} has the ``shape'' of quantum dilogarithm identities associated with Dynkin quivers. Those q-dilogarithm identities can be generalized to certain non-Dynkin quivers, see \cite{keller, KS, AR}.  Those generalizations are naturally associated with quivers {\em with potentials}, and the cohomology theories are accordingly modified to take into account the potential (cf. rapid decay cohomology). It would be interesting to study whether our Theorem \ref{thm:Z} has a similar generalization.

\end{document}